\numberwithin{equation}{section}
\newtheorem{lemma}{Lemma}[section]
\newtheorem{corollary}[lemma]{Corollary}
\newtheorem{definition}{Definition}[section]
\newtheorem{theorem}[definition]{Theorem}
\newtheorem{remark}{Remark}[section]
\begin{document}
\title{\bf Global solutions to planar  magnetohydrodynamic equations with radiation and large initial data }
\author{Xulong Qin$^{1,2}$
\thanks{\footnotesize{Corresponding author
\newline\indent~~E-mail: ~qin\_xulong@163.com,~~ mcsyao@mail.sysu.edu.cn
\newline\indent~~2000 \it Mathematics Subject
Classification.} ~35B45; 35L65; 35Q60; 76N10;76W05.\newline
\indent{\it~ Key words:}~~Magnetohydrodynamics(MHD); Radiation; Free
boundary value problem;   }\qquad Zheng-an Yao$^{1}$\\
\it \small $^1$Department of Mathematics, Sun Yat-sen University,\\
 \it\small Guangzhou 510275, People's Republic of China\\
\it \small $^2$The Institute of Mathematical Sciences, Chinese University of HongKong,\\
\it \small Shatin,  HongKong}
\date{}
 \maketitle
 \begin{abstract}
A global existence result is established for a free boundary problem of
planar   magnetohydrodynamic fluid flows with radiation and large initial data.
 Particularly, it is novelty to embrace the constant transport coefficient.
As a by-product, the free boundary is shown to expand outward at an algebraic rate from above in time.
 \end{abstract}
 \section{Introduction}
In astrophysics, stars may be viewed as compressible fluid flows
 formulated by the Navier-Stokes equations, which is now expressed
 by the conservation of mass, balance of momentum and energy.
 However, their dynamics are often influenced by  magnetic fields and high temperature radiation.
 When the radiation is taken into account, the new balance law for
 the intensity of radiation should be added to complete the
 hydrodynamic system. More precisely, the material flow needs a
 relativistic treatment since the photons are massless particles
 traveling at the speed of light. In other words, it raises more
 difficulties in mathematics and physics. Fortunately, under some
 plausibly physical hypotheses and asymptotic analysis, especially for
 the equilibrium diffusion model,
 the radiative transfer equation can be replaced by the usual hydrodynamic model
 equations with the pressure, the internal energy and the heat
 conductivity added by the special radiative components, see for instance \cite{Mihalas}.
Furthermore, when the magnetic fields are considered, the
 motions of conducting fluids may induce electric fields. Thus, the
 complex interaction of magnetic and electric fields significantly affects
 the hydrodynamic motion of fluid flows, which is described by Maxwell's equation. Finally,  the system of
 magnetohydrodynamics
  is the following in Eulerian coordinates.
\begin{subequations}\label{qian2}
\begin{align}
\partial_{t}\rho+\text{div}(\rho \mathbf{u})&=0, \label{qian21}\\[3mm]
\partial_{t}(\rho \mathbf{u})+\text{div}(\rho
\mathbf{u}\otimes\mathbf{u})+\nabla p&=\text
{div} \mathbb{S}+(\nabla\times\mathbf{H})\times\mathbf{H},\label{qian22}\\[3mm]
\partial_t(\rho \mathcal{E})+\text{div}(\rho \mathcal{E'}
\mathbf{u}+p\mathbf{u})+\text{div}\mathbf{Q}&=
\text{div}(\mathbb{S}\textbf{u}+\nu\mathbf{H}\times(\nabla\times\mathbf{H}))\notag\\[2mm]
&+\text{div}((\mathbf{u}\times\mathbf{H})\times\mathbf{H}),\label{qian23}\\[3mm]
\mathbf{H}_t-\nabla\times(\mathbf{u}\times\mathbf{H})&
=-\nabla\times(\nu\nabla\times\mathbf{H}),\quad
\text{div}\mathbf{H}=0,\label{qian4}
\end{align}
\end{subequations}
where $\rho$ is the density of fluid flows, $p$ is the pressure,
$\mathbf{u}\in \mathbb{R}^3$ the velocity, $\mathbf{H}\in
\mathbb{R}^3$ the magnetic field. $\mathcal{E}$ stands for the total
energy expressed by
\begin{equation*}
\mathcal{E}=\rho\left(e+\frac12|\mathbf{u}|^2\right)+\frac12|\mathbf{H}|^2,\quad
\mathcal{E'}=\rho\left(e+\frac12|\mathbf{u}|^2\right),
\end{equation*}
which $e$ is the internal energy of flows. The viscous stress tensor
\begin{equation*}
\mathbb{S}=\lambda'(\text{div}\mathbf{u})\mathbb{I}+\mu(\nabla\mathbf{u}+\nabla\mathbf{u}^{\top}),
\end{equation*}
where $\lambda'$ and $\mu$ are the viscosity coefficients of the
flows with $\lambda'+2\mu>0$. $\mathbb{I}$ is the $3\times 3$
identity matrix. $\nabla\mathbf{u}^{\top}$ is the transpose of the
matrix $\nabla\mathbf{u}$.  Particularly, we emphasis that the
electric field is induced by the velocity $\textbf{u}$ and the
magnetic field $\mathbf{H}$,
\begin{equation*}
\mathbf{E}=\nu\nabla\times\mathbf{H}-\mathbf{u}\times\mathbf{H},
\end{equation*}
and the induction equation \eqref{qian4} is derived from
neglecting the displacement current in Maxwell's equation. For
more detailed physical explanation and mathematical deduction, see
the appendix of \cite{ChenWang02} for reference.

The pressure $p$ of the gas obeys the equation of state
\begin{equation}\label{p}
p=p_{G}+p_{R}=R\rho\theta+\frac{a}{3}\theta^4.
\end{equation}
$R$ is a constant depending on the material properties of the gas,
$\theta$ the temperature of fluid flows and the last term in the
above equality denotes the radiative pressure with $a>0$ being the
stefan-Boltzmann constant. Accordingly, the internal energy is
\begin{equation}\label{e}
e=e_{G}+e_{R}=C_v\theta+\frac{a}{\rho}\theta^4
\end{equation}
with $C_v$ being the heat capacity of the gas at constant volume,
and similarly the heat flux $\mathbf{Q}$ is
\begin{equation*}
\mathbf{Q}=-\kappa\nabla\theta=-\left(\kappa_1+\frac{4ac\theta^3}{\hat{\kappa}\rho}\right)\nabla\theta,
\end{equation*}
where $\kappa_1$ is a positive constant and $c$ is the speed of
light. The quantity $1/\hat{\kappa}\rho$ is the mean free path of a
photon inside medium, which is related to $\theta$. Without loss of generality, we can assume that
\begin{equation}
 \kappa_1(1+\theta^q)\leq \kappa= \kappa(\rho,\theta) \leq \kappa_2(1+\theta^q),\,\,q\geq 0,\label{heatconductivity}
\end{equation}
where $\kappa_1,\kappa_2$ are positive constants.

In the present paper, we primarily study a free boundary problem for
planar magnetohydrodynamic fluid flows. More precisely, the motion
of flows is supposed to be in the $x$-direction and uniform in the
transverse direction $(y,z)$, i.e.,
\begin{equation*}
\begin{split}
&\rho=\rho(x,t),\qquad \theta=\theta(x,t),\\[3mm]
&\mathbf{u}=(u,\mathbf{w})(x,t),\qquad \mathbf{w}=(u_2,u_3),\\[3mm]
&\mathbf{H}=(b_1,\mathbf{b})(x,t),\qquad \mathbf{b}=(b_2, b_3),
\end{split}
\end{equation*}
and the corresponding dynamic equations \eqref{qian2} are reduced
to the following in the Eulerian coordinates.
\begin{subequations}\label{subequation1}
 \begin{align}
 &\rho_t+(\rho u)_x=0,\qquad x\in \Omega_t:=(a(t),b(t)), \,\,t>0,\label{sub1}\\[2mm]
 &(\rho u)_t+(\rho u^2+p+\frac12 |\mathbf{b}|^2)_x
 =(\lambda u_x)_x,\label{sub2}\\[2mm]
 &(\rho \mathbf{w})_t+(\rho u \mathbf{w}-\mathbf{b})_x=(\mu
 \mathbf{w}_x)_x,\label{sub3}\\[2mm]
 &\mathbf{b}_t+(u\mathbf{b}-\mathbf{w})_x=(\nu
 \mathbf{b}_x)_x,\label{sub4}\\[2mm]
& (\rho e)_t+(\rho e u)_x-(\kappa\theta_x)_x=\lambda u_x^2+\mu
|\mathbf{w}_x|^2+\nu |\mathbf{b}_x|^2-pu_x. \label{sub5}
 \end{align}
 \end{subequations}
where $\lambda=\lambda'+2\mu$ and $b_1=1$.

To supplement the system \eqref{subequation1}, we impose the
following initial conditions,
\begin{equation}
(\rho,u,\theta, \mathbf{w},\mathbf{b})(x,0)=(\rho_0,u_0,\theta_0,
\mathbf{w}_0,\mathbf{b}_0)(x).
\end{equation}
 and the free boundary conditions
\begin{equation}
(\lambda u_x-p,\mathbf{w},\mathbf{b},\theta_x)(f(t),t)=0, \qquad
f(t)=a(t),\,\,b(t). \label{freefreeboundary1}
\end{equation}
where $f(t)$ denotes the free boundary defined by $f'(t)=u(f(t),t)$.

The aim of this paper is to establish the well-posedness of the system \eqref{subequation1}--\eqref{freefreeboundary1}
for the general setting of the heat conductivity \eqref{heatconductivity},
especially including the case of constant transport coefficients.

Let us first review some related works in this line.
For instance, Zhang-Xie \cite{ZhangXie} studied the global existence of the equations \eqref{subequation1} for
Dirichlet boundary problem when $q>\frac{5}{2}$, which is extended to $q>(2+\sqrt{211})/9$ by Qin-Hu \cite{QinY}. In the forthcoming companion paper \cite{QINYAO2}, we also showed that global solutions exist for large initial data even if $q\geq 0$.
On the other hand,
Chen-Wang \cite{ChenWang02,Wang} established the existence of
global solutions of real gas for a free boundary or Dirichlet
problem, respectively, where they assume that the pressure and internal energy satisfy the following condition with exponent $r\in [0,1]$ and $q\geq 2(r+1)$.
\begin{equation*}
0\leq \rho p\leq p_0(1+\theta^{r+1}), \qquad e_\theta\geq e_0(1+\theta^r).
\end{equation*}
We remark that the  restriction on $q$ in \eqref{heatconductivity} is
only from a more mathematical point of view. The main difficulties come from the interaction
 of magnetic fields and fluid velocity, as well as higher nonlinearity of radiative term, which prevent solving the initial
 boundary problem by  means of known analytic techniques and tools.
 As a consequence, some essential new ideas are proposed to track this open problem.

From the previous works, we know that one of  the key points of the problem is  to achieve the upper and lower bounds of density and temperature,
which implies that there is no concentration of mass and heat.
By delicate analysis, we notice that the radiative term helps us get the upper and lower bounds of density based on entropy-type energy estimates.
Due to the interaction of the dynamic motion of the fluids and magnetic field, a priori estimates of temperature are much more complex.
In our context, new a priori estimates of $||\theta^8||_{L^1[0,1]}$ and $||\rho_x||_{L^2[0,1]}$, which  are controlled by
$||\mathbf{w}_{xx}||_{L^2([0,1]\times[0,t])}$ and $||\mathbf{b}_{xx}||_{L^2([0,1]\times[0,t])}$, are proposed, see Lemma \ref{lemma09} and Lemma \ref{lemma10},
and then a priori estimates of $||\mathbf{w}_{xx}||_{L^2([0,1]\times[0,t])}$ and $||\mathbf{b}_{xx}||_{L^2([0,1]\times[0,t])}$ are subsequently obtained. With these bounds, all required a priori estimates for  $||\theta^8||_{L^1[0,1]}$, $||\rho_x||_{L^2[0,1]}$ and $||u_x||_{L^4([0,1]\times[0,t])}$ are achieved, refer to \eqref{density}--\eqref{velocity}. In the subsequential process, motivated by \cite{ChenWang02,jiangJDE94, kawohl}, we succeed in getting the
upper bound of temperature and the first derivative of velocity.

To our best knowledge, there is a few results for the case of perfect flows, namely neglecting the radiative effect, or equivalently $a=0$ in \eqref{p}, when the transport coefficients are positive constants.
Among them, Kawashima and Okada \cite{Kawashima}
proved the existence of global smooth solutions to one-dimensional
motion  with small initial data. In addition, Hoff and Tsyganov \cite{Hoff} considered
uniqueness and continuous dependence on initial data of weak
solution of the equations of compressible magnetohydrodynamics.
Moreover, Fan-Jiang-Nakanura \cite{FanJiang} showed the global
weak solutions of plane magnetohydrodynamic compressible flows
converge to a solution of the original equations with zero shear
viscosity as the shear viscosity goes to zero when $q\geq 1$.
However, it is an open problem for the global existence of perfect flows with large initial date, which may be shed light on by our techniques.
From our analysis, we know that the focus of the problem is on the lower bound of density since the lower bound of density in our context is strongly depended on the radiative term.

However, similar obstacle on $q$  is  also in existence even if we neglect the magnetic fields.
For instance,  Ducomet-Zlotnik \cite{Ducomet&Zlotnik} proved the existence and asymptotic
behavior for 1D radiative and reactive gas when $q\geq 2$ and
Umehara-Tani \cite{umeharaTani,umeharaTaniAA} made further extension
in this direction for 1D or spherically symmetric case when $3\leq
q<9$, which extended to $q\geq 0$ by authors in \cite{QINYAO1} . In addition, Wang and Xie \cite{WangXie} showed global
existence of strong solutions for the Cauchy problem when
$q>\frac52$ and the reference \cite{DucometZlotnikhigher} proved
global-in-time bounds of solutions and established it global
exponential decay in the Lebegue and Sobolev spaces  when $q\geq 2$.

Indeed, there are also extensively studies on MHD
hydrodynamics, which is beyond our ability to address exhaustive references, see for instance
\cite{DFeireisl05,Ducomet06,HuWangCMP,HuWangJDE,Rohde,RohdeYong,Secchi,Wangproceeding03,wang1,Zhongjiang} and references
cited therein.

We formulate our problem and state the main results in section 2, and In section 3, we
give the essential a priori estimates for the existence of global solutions.
\section{Preliminaries and Main Result}
Before stating the main result, we first introduce the Lagrangian
coordinates to translate the free  boundary problem
\eqref{subequation1}--\eqref{freefreeboundary1} to the fixed one for convience, which
is equivalent under consideration. Let
\begin{equation}\label{transformation}
y=\int_{a(t)}^x\rho(\xi,t)d\xi,\qquad  t=t.
\end{equation}
Then $0\leq y\leq 1=\int_{0}^{1}\rho(\xi,t)d\xi$ which is
 the total mass of fluid flows without loss of generality.
The system \eqref{subequation1} canonically becomes
\begin{subequations}\label{subequation}
 \begin{align}
 v_t&=u_y,\label{sub11}\\[2mm]
 u_t&=\left(-p-\frac12 |\mathbf{b}|^2+\frac{\lambda
 u_y}{v}\right)_y,\label{sub22}\\[2mm]
 \mathbf{w}_t&=\left(\mathbf{b}+\frac{\mu
 \mathbf{w}_y}{v}\right)_y,\label{sub33}\\[2mm]
 (v\mathbf{b})_t&=\left(\mathbf{w}+\frac{\nu
 \mathbf{b}_y}{v}\right)_y,\label{sub44}\\[2mm]
 e_t&=\left(\frac{\kappa}{v}\theta_y\right)_y
 +\left(-p+\frac{\lambda}{v}u_y\right)u_y+\frac{\mu |\mathbf{w}_y|^2}{v}+\frac{\nu |\mathbf{b}_y|^2}{v}, \label{sub55}
 \end{align}
 \end{subequations}
 where $v=1/\rho$ is the specific volume. The corresponding initial data and boundary condition are as follows
\begin{equation}
(v,u,\theta,\mathbf{w},\mathbf{b})(y,0)=(v_0(y),
u_0(y),\theta_0(y),\mathbf{w}_0(y),\mathbf{b}_0(y)).
\end{equation}
and
\begin{equation}
(\lambda
\frac{u_y}{v}-p,\mathbf{w},\mathbf{b},\theta_y)|(d,t)=0,\qquad
d=0,1. \label{boundary}
\end{equation}

With these preliminaries, we are now ready to state the main
result for the system \eqref{subequation}--\eqref{boundary}.
\begin{theorem}\label{thm}
Let $q\geq 0$ and $\alpha \in (0,1)$. Suppose that $\lambda,
\mu,\nu$ are positive constants. In addition, assume that the
initial data $v_0(y)$, $u_0(y)$,$\mathbf{w}_0(y)$,
$\mathbf{b}_0(y)$, $\theta_0(y)$ satisfy
\begin{equation*}
C_0^{-1}\leq v_0(y),\quad \theta_0(y)\leq C_0,
\end{equation*}
for some positive constant $C_0$ and
\begin{equation*}
(v_0(y),u_0(y),\mathbf{w}_0(y),\mathbf{b}_0(y),\theta_0(y))\in
C^{1+\alpha}(\Omega)\times C^{2+\alpha}(\Omega)^6,
\end{equation*}
for $\alpha\in (0,1)$.  Then there exists a unique solution
$(v,u,\mathbf{w},\mathbf{b},\theta)$ of the initial boundary value
problem \eqref{subequation}--\eqref{boundary} such that
\begin{equation*}
(v,v_y, v_t)\in C^{\alpha,\alpha/2}(Q_T)^3,
\end{equation*}
and
\begin{equation*}
 (u,\mathbf{w},\mathbf{b},\theta)\in
C^{2+\alpha,1+2/\alpha}(Q_T)^6.
\end{equation*}
Moreover, the expand rate of interface is
\begin{equation*}
0<b(t)-a(t)\leq C(1+t),
\end{equation*}
where $C$ is a positive constant independent of time $t$.
\end{theorem}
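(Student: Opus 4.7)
The plan is a standard continuation argument: establish local well-posedness in the Hölder category by a fixed-point linearization of the parabolic system \eqref{subequation}--\eqref{boundary}, then derive uniform-in-$T$ a priori estimates on $[0,T]$ for any $T>0$, and extend the local solution globally. Local existence in the class $v,v_y,v_t \in C^{\alpha,\alpha/2}(Q_T)$ and $(u,\mathbf{w},\mathbf{b},\theta)\in C^{2+\alpha,1+\alpha/2}(Q_T)^6$ follows from Solonnikov-type results for quasilinear parabolic systems, once the specific volume $v$ and temperature $\theta$ are known to be bounded away from $0$ and $\infty$. The bulk of the work is therefore the a priori estimates.

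I would execute these in the following order. First, I would establish the basic conservation laws and the energy identity by testing \eqref{sub22}--\eqref{sub44} with $u,\mathbf{w},\mathbf{b}$ respectively and integrating \eqref{sub55} on $(0,1)$; together with the boundary conditions \eqref{boundary} this yields uniform control of $\int_0^1 (\rho|\mathbf{u}|^2/2 + \rho e + |\mathbf{b}|^2/2)\,dy$ and, via \eqref{e}, an $L^\infty_t L^1_y$ estimate on $\theta^4$. Second, the entropy inequality arising from the Clausius--Duhem form of \eqref{sub55} produces an integrated dissipation bound for $u_y^2/(v\theta)$, $|\mathbf{w}_y|^2/(v\theta)$, $|\mathbf{b}_y|^2/(v\theta)$ and $\kappa|\theta_y|^2/(v\theta^2)$. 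Third, and crucially, the radiative component $a\theta^4/3$ of the pressure gives a Kazhikhov--Shelukhin type representation formula for $v$: integrating \eqref{sub22} in $y$ and $t$ after multiplication by a suitable cutoff produces both upper and lower bounds on $v$ in terms of quantities already controlled, with the radiative term being indispensable for the lower bound, as emphasized in the introduction.

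The main obstacle will be the temperature upper bound in the physically realistic regime $q\geq 0$, where the heat conductivity need not grow in $\theta$. Following the strategy sketched in the introduction, I would close the estimates via a coupled loop. On one hand, testing \eqref{sub33} and \eqref{sub44} by $-\mathbf{w}_{yy}/v$ and $-\mathbf{b}_{yy}/v$ yields bounds on $\|\mathbf{w}_{yy}\|_{L^2_{y,t}}$ and $\|\mathbf{b}_{yy}\|_{L^2_{y,t}}$ in terms of $\|v_y\|_{L^2_y}$ and appropriate norms of $\theta$. On the other hand, Lemmas \ref{lemma09} and \ref{lemma10} control $\|\theta^8\|_{L^1_y}$ and $\|v_y\|_{L^2_y}$ by $\|\mathbf{w}_{yy}\|_{L^2_{y,t}}$ and $\|\mathbf{b}_{yy}\|_{L^2_{y,t}}$. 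Chaining the two inequalities in a Grönwall-type fashion closes the system and delivers the estimates labeled \eqref{density}--\eqref{velocity}. With $\theta \in L^\infty_t L^8_y$ and $v_y \in L^\infty_t L^2_y$ in hand, the maximum-principle arguments in the spirit of \cite{ChenWang02,jiangJDE94,kawohl} applied to \eqref{sub55} rewritten as a parabolic equation for $\theta$ yield $\theta \in L^\infty_{y,t}$, after which returning to \eqref{sub22} produces pointwise bounds on $u_y/v$.

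Finally, Schauder estimates bootstrapped from the now uniformly parabolic equations \eqref{sub22}--\eqref{sub55} deliver the claimed Hölder regularity, and the continuation argument produces a global solution. For the interface rate, since the Lagrangian map \eqref{transformation} is a bijection between $(a(t),b(t))$ and $(0,1)$, one has $b(t)-a(t) = \int_0^1 v(y,t)\,dy$, and the uniform upper bound on $v$ together with the $L^1_t L^\infty_y$ control on $u_y$ gives $b(t)-a(t)\leq C(1+t)$; the strict positivity follows from the lower bound on $v$. I expect the simultaneous closure in step three to be the main technical bottleneck, since the magnetic-field dissipation and the radiative temperature estimate involve the same nonlinear quantities $v$ and $\theta$ and cannot be decoupled.
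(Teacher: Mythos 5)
Your outline matches the paper's proof strategy closely: basic energy/entropy estimates, a Kazhikhov--Shelukhin representation formula for $v$ (which is exactly \eqref{lemma2V}, $\lambda\ln v=\lambda\ln v_0+\int_0^t(p+\tfrac12|\mathbf{b}|^2)\,ds+\int_0^y(u-u_0)\,dx$), the coupled loop between $\|\theta^8\|_{L^1}$, $\|v_y\|_{L^2}$ and $\|\mathbf{w}_{yy}\|_{L^2_{y,t}}$, $\|\mathbf{b}_{yy}\|_{L^2_{y,t}}$ closed by absorbing a small coefficient, and then the Kawohl-style closure with auxiliary quantities $X,Y,Z$ (your ``maximum-principle arguments in the spirit of Kawohl'' is slightly too vague --- the actual step is an interpolation argument ending with $Z\leq C(1+Z^{(q+8)/(q+10)})$ so that $Z$ is bounded because the exponent is subcritical --- but you cite the right references and the loop structure you describe is the one used).

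There is, however, a concrete gap in the way you propose to obtain the interface expansion rate. The theorem requires $0<b(t)-a(t)\leq C(1+t)$ with $C$ \emph{independent of $t$}, and this is established in the paper before (and without) the lower bound on $\rho$ or any $L^p$-in-time control on $u_y$, precisely because those later estimates all carry $T$-dependent constants $C(T)$. Your proposed derivation invokes a ``uniform upper bound on $v$'' plus ``$L^1_tL^\infty_y$ control on $u_y$,'' but neither of these is available with constants independent of $T$: the upper bound $v\leq C(T)$ (equivalently, the lower bound on $\rho$) is time-dependent, and the only estimates on $u_y$ are $T$-dependent integral bounds, so Cauchy--Schwarz on $v(y,t)=v_0(y)+\int_0^t u_y\,ds$ produces at best $C(T)^{1/2}t^{1/2}$ rather than $C(1+t)$ with $C$ universal. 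The paper instead integrates \eqref{sub22} in $y$ to get $\int_0^y u_t\,dx=\lambda u_y/v-(p+\tfrac12|\mathbf{b}|^2)$, multiplies by $v$, integrates in $y$ and $t$, and evaluates each resulting term using only the basic energy identity and the $T$-independent upper bound on $\rho$; every term is $\leq C(1+t)$ with a universal constant. That separate identity, not a pointwise bound on $v$, is what produces the algebraic growth rate, and your sketch is missing it.
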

\begin{remark}
Obviously, the case of constant transport coefficients is included when $q=0$. In addition, it is also valid
for $\kappa=\kappa_1+\kappa_2\frac{\theta^q}{\rho}$ instead of \eqref{heatconductivity}.
\end{remark}

The procedure for the existence of global-in-time solutions is classical from the standard method by the global a priori estimates of $(\rho, u, \theta,
\mathbf{w},\mathbf{b})$. Therefore, the main task for us is to establish the global a priori estimates.
\section{Some a priori estimates}
In order to establish the existence of global solutions, we need
to deduce some a priori estimates for $(\rho, u, \theta,
\mathbf{w},\mathbf{b})$, which is essential to extending the local
solutions by fixed point theorem. In the sequel, the capital
$C(C(T))$ will denote generic positive constant depending on the
initial data (time $T>0$), which may be different from line to
line.

\subsection{Upper and lower bounds of density}

As usual, the basic energy estimates is as follows.
\begin{lemma}\label{Lemma1}
We have
\begin{equation}
\int_0^1\left(e+\frac12(u^2+|\mathbf{w}|^2+v|\mathbf{b}|^2)\right)dy\leq
C. \label{lemma1}
\end{equation}
\end{lemma}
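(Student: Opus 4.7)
The plan is to derive an energy identity by combining the five equations of \eqref{subequation} with appropriate multipliers and then noting that the total energy is conserved because of the specific form of the boundary conditions \eqref{boundary}.

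First, I would multiply the momentum equation \eqref{sub22} by $u$ and integrate over $[0,1]$. Using $\lambda u_y/v-p=0$ and $\mathbf{b}=0$ at $y=0,1$, the boundary contribution $\bigl[u(-p-\tfrac12|\mathbf{b}|^2+\lambda u_y/v)\bigr]_0^1$ vanishes, producing
\begin{equation*}
\frac{d}{dt}\int_0^1 \tfrac12 u^2\,dy
= \int_0^1\bigl(p+\tfrac12|\mathbf{b}|^2\bigr)u_y\,dy - \int_0^1 \frac{\lambda u_y^2}{v}\,dy.
\end{equation*}
Similarly, multiplying \eqref{sub33} by $\mathbf{w}$ and using $\mathbf{w}|_{y=0,1}=0$ gives
\begin{equation*}
\frac{d}{dt}\int_0^1 \tfrac12|\mathbf{w}|^2\,dy
= -\int_0^1 \mathbf{w}_y\!\cdot\!\mathbf{b}\,dy - \int_0^1\frac{\mu|\mathbf{w}_y|^2}{v}\,dy.
\end{equation*}

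Next I would handle the magnetic equation \eqref{sub44}. Multiplying by $\mathbf{b}$ and using $v_t=u_y$ in the identity $\mathbf{b}\!\cdot\!(v\mathbf{b})_t=(\tfrac12 v|\mathbf{b}|^2)_t+\tfrac12 u_y|\mathbf{b}|^2$, together with $\mathbf{b}|_{y=0,1}=0$, yields
\begin{equation*}
\frac{d}{dt}\int_0^1 \tfrac12 v|\mathbf{b}|^2\,dy
= -\int_0^1 \mathbf{b}_y\!\cdot\!\mathbf{w}\,dy - \int_0^1\frac{\nu|\mathbf{b}_y|^2}{v}\,dy - \int_0^1\tfrac12 u_y|\mathbf{b}|^2\,dy.
\end{equation*}
Finally, integrating the internal energy equation \eqref{sub55} directly and using $\theta_y|_{y=0,1}=0$ to kill the heat-flux boundary term gives
\begin{equation*}
\frac{d}{dt}\int_0^1 e\,dy
= \int_0^1\Bigl(-p+\tfrac{\lambda u_y}{v}\Bigr)u_y\,dy + \int_0^1\frac{\mu|\mathbf{w}_y|^2+\nu|\mathbf{b}_y|^2}{v}\,dy.
\end{equation*}

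Now I would add the four identities. The pressure-work terms $\pm pu_y$ cancel; the viscous dissipations $\lambda u_y^2/v$, $\mu|\mathbf{w}_y|^2/v$, $\nu|\mathbf{b}_y|^2/v$ appear with opposite signs and cancel; the Lorentz-work terms $\tfrac12 u_y|\mathbf{b}|^2$ cancel; and the two cross terms combine into $-\int_0^1(\mathbf{w}\!\cdot\!\mathbf{b})_y\,dy$, which is $0$ by the boundary values. The outcome is exact conservation
\begin{equation*}
\frac{d}{dt}\int_0^1\left(e+\tfrac12 u^2+\tfrac12|\mathbf{w}|^2+\tfrac12 v|\mathbf{b}|^2\right)dy = 0,
\end{equation*}
and the bound \eqref{lemma1} then follows from the initial-data assumptions in Theorem~\ref{thm}. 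There is no real obstacle here; the only care required is bookkeeping the sign of every boundary contribution and using $v_t=u_y$ to convert $(v\mathbf{b})_t$ into the time derivative of the magnetic energy density. This reflects the physical fact that in the Lagrangian free-boundary formulation with stress-free and insulating conditions, the total (internal$+$kinetic$+$magnetic) energy is conserved.
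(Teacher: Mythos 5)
Your proof is correct and is precisely the computation the paper leaves implicit when it asserts the conservation identity $\frac{d}{dt}\int_0^1\left(e+\frac12(u^2+|\mathbf{w}|^2+v|\mathbf{b}|^2)\right)dy=0$ directly from \eqref{sub11}--\eqref{sub55} and the boundary conditions. You have simply filled in the standard multiplier/integration-by-parts bookkeeping (including the $v_t=u_y$ substitution needed to convert $(v\mathbf{b})_t\cdot\mathbf{b}$ into the time derivative of the magnetic energy density), so this is the same approach.
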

\begin{proof}
From \eqref{sub11}--\eqref{sub55}, we get from the
boundary conditions \eqref{boundary}
\begin{equation*}
\frac{d}{dt}\int_0^1\left(e+\frac12(u^2+|\mathbf{w}|^2+v|\mathbf{b}|^2)\right)dy=0,
\end{equation*}
which implies \eqref{lemma1}.
\end{proof}
And then, we immediately arrive at the uniformed upper boundedness of density for all time.
\begin{lemma}\label{Lemma4}
Suppose the hypotheses of Theorem \ref{thm} are valid, then
\begin{equation}
\rho(y,t)\leq C,\label{rho}
\end{equation}
and
\begin{equation}
\int_0^1\theta^4dy\leq C,\label{coro1}
\end{equation}
and
\begin{equation}
\int_0^1|\mathbf{b}|^2dy\leq C.\label{magnetic}
\end{equation}
for $(y,t)\in (0,1)\times (0,T)$.
\end{lemma}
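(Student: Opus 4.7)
The plan is to produce an explicit representation for $\ln v$ via the momentum balance combined with the stress-free boundary conditions, and then to read off all three bounds from the signs of the resulting terms. Rewrite \eqref{sub22} in the form $u_t = \sigma_y$ with $\sigma := \lambda u_y/v - p - \tfrac12|\mathbf{b}|^2$. The boundary condition \eqref{boundary} gives $\lambda u_y/v - p = 0$ and $\mathbf{b}=0$ at $y=0,1$, so $\sigma(0,t)=\sigma(1,t)=0$, and integrating $u_t=\sigma_y$ in $y$ yields $\sigma(y,t)=\int_0^y u_t(z,t)\,dz$. Using $\lambda u_y/v = \lambda(\ln v)_t$ via \eqref{sub11}, and then integrating in time while swapping the order of integration in the first term on the right, gives
$$\lambda\,\ln\frac{v(y,t)}{v_0(y)} \;=\; \int_0^y\bigl(u(z,t)-u_0(z)\bigr)\,dz \;+\; \int_0^t\!\Bigl(p + \tfrac12|\mathbf{b}|^2\Bigr)(y,s)\,ds.$$

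The spatial term on the right is bounded uniformly in $(y,t)$: Lemma \ref{Lemma1} gives $\|u(\cdot,t)\|_{L^2(0,1)} \leq C$, so Cauchy--Schwarz controls $|\int_0^y u\,dz|$, and the initial contribution is bounded by the assumed regularity of $u_0$. The time-integral term is manifestly non-negative because $p\geq 0$ from \eqref{p} and $|\mathbf{b}|^2\geq 0$. Combined with the lower bound $v_0\geq C_0^{-1}$, this forces $\ln v(y,t)\geq -C$ and hence $v(y,t)\geq c > 0$, which is precisely \eqref{rho}. The remaining two estimates then follow at once: expanding $e = C_v\theta + av\theta^4$ in Lemma \ref{Lemma1} yields $\int_0^1 av\theta^4\,dy\leq C$ and $\int_0^1 v|\mathbf{b}|^2\,dy\leq C$, and multiplying by $v^{-1}\leq c^{-1}$ produces \eqref{coro1} and \eqref{magnetic}.

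The only mildly subtle step is the identification $\sigma|_{y=0,1}=0$, which is exactly what turns the representation into a sign-definite formula. Because Lemma \ref{Lemma4} asks only for an \emph{upper} bound on $\rho$, positivity of the pressure suffices by itself, so I do not expect any real obstacle here; the more delicate \emph{lower} bound on $\rho$, for which the radiative pressure $\tfrac{a}{3}\theta^4$ is genuinely indispensable (as the introduction emphasises), will require an entropy-type estimate and is presumably handled in a separate lemma.
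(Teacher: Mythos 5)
Your proof is correct and follows exactly the same route as the paper: rewrite \eqref{sub22} using $\lambda u_y/v=\lambda(\ln v)_t$, integrate over $[0,y]\times[0,t]$ to obtain $\lambda\ln v=\lambda\ln v_0+\int_0^t(p+\tfrac12|\mathbf{b}|^2)\,ds+\int_0^y(u-u_0)\,dx$, bound the spatial term by the basic energy estimate and drop the non-negative time-integral term, then convert $\int_0^1 v\theta^4\,dy\leq C$ and $\int_0^1 v|\mathbf{b}|^2\,dy\leq C$ into \eqref{coro1} and \eqref{magnetic} via the just-obtained upper bound on $\rho$. Your explicit mention of the boundary condition $\sigma|_{y=0}=0$ makes the integration step transparent, but it is the same argument.
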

\begin{proof}
The equation \eqref{sub22} can be rewritten as
\begin{equation*}
u_t=\left(\lambda (\ln v)_t-p-\frac12|\mathbf{b}|^2\right)_y.
\end{equation*}
Integrating it over $[0,y]\times [0,t]$, we obtain
\begin{equation}
\begin{split}
\lambda \ln v=\lambda \ln v_0(y)+\int_0^t(p+\frac12|\mathbf{b}|^2)ds
+\int_0^y(u-u_0)dx,\label{lemma2V}
\end{split}
\end{equation} and notice that
\begin{equation*}
\left|\int_0^y(u-u_0)dx\right|\leq
C\int_0^1u^2dy+C\int_0^1u_0^2dy\leq C,
\end{equation*}
which implies
\begin{equation*}
\ln v\geq -C,
\end{equation*}
or equivalently
\begin{equation*}
\rho(y,t)\leq C.
\end{equation*}
Following from  \eqref{lemma1} and \eqref{rho}, we easily get
\begin{equation*}
\int_0^1\theta^4dy=\int_0^1\rho v\theta^4dy\leq
C\int_0^1v\theta^4dy\leq C\int_0^1edy\leq C.
\end{equation*}
Similarly, one has
\begin{equation*}
\int_0^1|\mathbf{b}|^2dy=\int_0^1\rho v|\mathbf{b}|^2dy\leq
C\int_0^1v|\mathbf{b}|^2dy\leq C.
\end{equation*}
This ends the proof.
\end{proof}
In the sequel, the expand rate  of free boundaries is obtained.
\begin{lemma}It holds that
\begin{equation}
0<\int_0^1vdy\leq C(1+t), \label{freelemma33}
\end{equation}
or equivalently, by the transformation \eqref{transformation}, one has
\begin{equation*}
0<b(t)-a(t)\leq C(1+t).
\end{equation*}
\end{lemma}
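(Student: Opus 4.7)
The positivity part is easy: $v = 1/\rho > 0$ gives $\int_0^1 v\,dy > 0$ immediately, and quantitatively Cauchy--Schwarz gives $1 = (\int_0^1 \sqrt{\rho v}\,dy)^2 \leq (\int_0^1 \rho\,dy)(\int_0^1 v\,dy)$, so the uniform bound $\rho \leq C$ from Lemma \ref{Lemma4} yields $\int_0^1 v\,dy \geq 1/C$.

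For the upper bound, my plan is to derive a moment identity by multiplying the momentum equation \eqref{sub22} by $y$ and integrating over $[0,1]$. The boundary conditions \eqref{boundary} make the flux $A := \lambda u_y/v - p - \frac{1}{2}|\mathbf{b}|^2$ vanish at $y = 0,1$, so integration by parts together with the relation $u_y/v = (\ln v)_t$ coming from \eqref{sub11} produces
\[
\frac{d}{dt}\Bigl( \int_0^1 y u\,dy + \lambda \int_0^1 \ln v\,dy \Bigr) = \int_0^1 \Bigl( p + \tfrac{1}{2}|\mathbf{b}|^2 \Bigr) dy.
\]
The right-hand side is bounded by a constant: by Lemma \ref{Lemma4}, $\int \theta^4\,dy \leq C$ and $\rho \leq C$, so $\int \rho\theta\,dy \leq C$ (using the Cauchy--Schwarz interpolation $\int \theta^2\,dy \leq (\int \theta^4\,dy)^{1/2} \leq C$ and the bound on $\rho$), which gives $\int p\,dy \leq C$; and $\int|\mathbf{b}|^2\,dy \leq C$ is already in Lemma \ref{Lemma4}. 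Integrating in time and using $|\int y u\,dy| \leq (\int u^2\,dy)^{1/2} \leq C$ from Lemma \ref{Lemma1}, one arrives at $\int_0^1 \ln v\,dy \leq C(1+t)$.

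The main obstacle is converting this logarithmic-average control into the actual linear bound on $\int_0^1 v\,dy$, since Jensen's inequality for the concave logarithm goes in the wrong direction. The plan is to combine the log estimate with a direct differential inequality for $V(t) := \int_0^1 v\,dy$. From \eqref{sub11} one has $V'(t) = u(1,t) - u(0,t) = \int_0^1 u_y\,dy$, and Cauchy--Schwarz yields $|V'(t)| \leq \sqrt{D(t)\,V(t)}$ with $D(t) := \int_0^1 u_y^2/v\,dy$, equivalently $(2\sqrt V)' \leq \sqrt{D}$. One then controls $\int_0^t D(s)\,ds$ by multiplying \eqref{sub22} by $u$, integrating by parts so the boundary terms vanish, and absorbing $\int u_y(p + \tfrac12|\mathbf{b}|^2)\,dy$ into $\frac{\lambda}{2}D(t)$ plus a remainder of the form $C\int v(p + \tfrac12|\mathbf{b}|^2)^2\,dy$ via Young's inequality; the remainder is bounded using $\rho \leq C$, $\int \theta^4\,dy \leq C$, and $\int v|\mathbf{b}|^2\,dy \leq C$ from Lemmas \ref{Lemma1}--\ref{Lemma4}. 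A Gronwall-type argument after time-integration then delivers $V(t) \leq C(1+t)$, the crux being that the viscous dissipation is strong enough to force the growth of $\sqrt{V}$ to be at most $\sqrt{t}$.
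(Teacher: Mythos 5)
Your positivity argument is fine, and your first idea (multiplying \eqref{sub22} by $y$) correctly derives $\int_0^1 \ln v\,dy \leq C(1+t)$, but you rightly recognize that Jensen's inequality runs the wrong way, so this cannot yield an upper bound on $\int_0^1 v\,dy$. The paper sidesteps this by a different weight: it first integrates \eqref{sub22} over $[0,y]$ to get $\int_0^y u_t\,dx = \lambda u_y/v - (p+\tfrac12|\mathbf{b}|^2)$, then multiplies by $v$ (not $y$) and integrates in $y$ and $t$. Since $v u_y/v = u_y = v_t$, this produces $\lambda \frac{d}{dt}\int_0^1 v\,dy$ directly, and the remaining terms are controlled using only $\int_0^1 u^2\,dy\le C$, $vp\le Ce$, and $\int_0^1 v|\mathbf{b}|^2\,dy\le C$, all available from Lemmas~\ref{Lemma1}--\ref{Lemma4}.

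Your fallback via the differential inequality $(2\sqrt V)'\le\sqrt{D}$ with $D(t):=\int_0^1 u_y^2/v\,dy$ contains a genuine gap. The Young absorption leaves a remainder $C\int_0^1 v(p+\tfrac12|\mathbf{b}|^2)^2\,dy$, which contains $\int_0^1 v\theta^8\,dy$ and $\int_0^1 v|\mathbf{b}|^4\,dy$; neither is controlled by the estimates available at this point ($\rho\le C$, $\int\theta^4\,dy\le C$, $\int v|\mathbf{b}|^2\,dy\le C$). You would need pointwise control of $\theta$ and $|\mathbf{b}|$, or at least higher integrability, which does not exist yet. Worse, to conclude $V(t)\le C(1+t)$ from $(2\sqrt V)'\le\sqrt D$ you need $\int_0^t D(s)\,ds\le C$ uniformly, and the only place this kind of bound appears in the paper is Lemma~3.7 (the $L^2_{y,t}$ bound on $u_y$), whose proof relies on Lemmas~3.4--3.6, which in turn invoke the very estimate $\int_0^1 v\,dy\le C(1+t)$ you are trying to prove. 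So this route is circular as well as incomplete. The fix is the paper's choice of test weight $v$, which converts the dissipative term into a total time derivative of $\int_0^1 v\,dy$ and lets the argument close using only Lemmas~\ref{Lemma1} and~\ref{Lemma4}.
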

\begin{proof}
Integrating $[0,y]$ over \eqref{sub22}, it yields
\begin{equation*}
\int_0^yu_tdx=\frac{\lambda
u_y}{v}-(p+\frac12|\mathbf{b}|^2),
\end{equation*}
which implies by multiplying  $v$ and integrating with respect to
$y$ and $t$
\begin{equation*}
\begin{split}
&\lambda\int_0^1v dy\\
&=\lambda\int_0^1v_0(y)
dy+\int_0^t\int_0^1v\left(p+\frac12|\mathbf{b}|^2\right)dyds\\
&+\int_0^t\int_0^1v\left(\int_0^yu_tdx\right)dyds.
\end{split}
\end{equation*}
For the second term on the right hand side,  it is deduced that
\begin{equation*}
\int_0^t\int_0^1v\left(p+\frac12|\mathbf{b}|^2\right)dyds\leq
C\int_0^t\int_0^1\left(e+v|\mathbf{b}|^2\right)dyds\leq C(1+t).
\end{equation*}
On the other hand, we get from \eqref{sub11} by integration
\begin{equation*}
v(y,t)=v_0(y)+\int_0^tu_y(y,s)ds.\label{}
\end{equation*}
Thus, it leads the third term on the right hand side to
\begin{equation*}
\begin{split}
&\int_0^t\int_0^1v\left(\int_0^yu_tdx\right)dyds\\
&=\int_0^1\left(v_0(y)+\int_0^tu_y(y,s)ds\right)\left(\int_0^yudx\right)dy\\
&+\int_0^t\int_0^1u^2dyds-\int_0^1v_0(y)\left(\int_0^yu_0(x)dx\right)dy\\
&=\int_0^1v_0(y)\left(\int_0^y(u-u_0(x))dx\right)dy+\int_0^t\int_0^1u^2dyds\\
&-\int_0^1u(y,t)\left(\int_0^tu(y,s)ds\right)dy, \\
&\leq C(1+t).
\end{split}
\end{equation*}
Thus, we utilize \eqref{lemma1} to finish the proof.
\end{proof}

\begin{lemma}
One has
\begin{equation}
U(t)+\int_0^tV(\tau)d\tau\leq C(T), \label{lemma3}
\end{equation}
where
\begin{equation}
U(t)=\int_0^1\left(C_v(\theta-1-\log \theta)+R(v-1-\log
v)\right)dy,\notag
\end{equation}
and
\begin{equation}
V(t)=\int_0^1\left(\frac{\lambda u_y^2}{v\theta}+\frac{\mu
|\mathbf{w}_y|^2}{v\theta}+\frac{\nu
|\mathbf{b}_y|^2}{v\theta}+\frac{\kappa\theta_y^2}{v\theta^2}\right)dy,\notag
\end{equation}
for $0\leq t\leq T$.
\end{lemma}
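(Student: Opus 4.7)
The plan is to derive a single identity expressing $U(t)+\int_0^t V(\tau)\,d\tau$ in terms of quantities already controlled by the previous lemmas, by exploiting the Gibbs relation $\theta\,ds = de + p\,dv$ adapted to the radiative equation of state. Specifically, with $e = C_v\theta + av\theta^4$ and $p = R\theta/v + a\theta^4/3$, one checks that
\begin{equation*}
s := C_v\log\theta + R\log v + \frac{4av\theta^3}{3}
\end{equation*}
satisfies $\theta\,ds = de + p\,dv$ exactly (the cross-derivatives of $4av\theta^2$ and $\frac{4a\theta^3}{3}$ match, so the expression is an exact differential).

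Next, I would divide the internal-energy equation \eqref{sub55} by $\theta$, move $-p u_y = -p v_t$ to the left side via $v_t=u_y$, and use Gibbs to rewrite the left-hand side as $s_t$. Factoring one derivative out of $\theta^{-1}(\kappa\theta_y/v)_y$ produces the identity
\begin{equation*}
s_t = \left(\frac{\kappa\theta_y}{v\theta}\right)_y + \frac{\kappa\theta_y^2}{v\theta^2} + \frac{\lambda u_y^2}{v\theta} + \frac{\mu|\mathbf{w}_y|^2}{v\theta} + \frac{\nu|\mathbf{b}_y|^2}{v\theta}.
\end{equation*}
Integrating in $y$ over $[0,1]$, the divergence term vanishes owing to the boundary condition $\theta_y|_{y=0,1}=0$ in \eqref{boundary}, giving $\frac{d}{dt}\int_0^1 s\,dy = V(t)$. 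Integrating once more in time produces $\int_0^t V(\tau)\,d\tau = \int_0^1 s(t)\,dy - \int_0^1 s(0)\,dy$.

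Adding $U(t)$, the logarithmic pieces $-C_v\log\theta - R\log v$ in $U$ cancel exactly against the corresponding pieces of $\int s$, so that
\begin{equation*}
U(t) + \int_0^t V(\tau)\,d\tau = \int_0^1 \Big[C_v(\theta-1) + R(v-1) + \tfrac{4av\theta^3}{3}\Big](t)\,dy + C,
\end{equation*}
where $C$ depends only on the initial data. It then remains to bound the right-hand side: $\int C_v\theta\,dy$ and $\int av\theta^4\,dy$ are absorbed by the energy bound of Lemma \ref{Lemma1}; $\int R v\,dy$ is at most $C(1+t)$ by the expansion-rate estimate \eqref{freelemma33}; and the radiative contribution is controlled by H\"older's inequality,
\begin{equation*}
\int_0^1 v\theta^3\,dy \leq \left(\int_0^1 v\,dy\right)^{1/4}\!\!\left(\int_0^1 v\theta^4\,dy\right)^{3/4} \leq C(1+t)^{1/4},
\end{equation*}
using once more Lemmas \ref{Lemma1} and \eqref{freelemma33}.

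The computation itself is direct; the only delicate point is to identify the correct radiative entropy $s$ so that Gibbs' relation is consistent with the equations of state \eqref{p}--\eqref{e} \textbf{and} the extra term $\frac{4av\theta^3}{3}$ remains controllable, which is precisely where the (non-uniform in $T$) growth $\int v\,dy \leq C(1+t)$ is used. This is the only obstacle of substance; once past it, the pointwise non-negativity of both integrands in $U(t)$ and $V(t)$ yields the claimed estimate $U(t)+\int_0^t V(\tau)\,d\tau \leq C(T)$.
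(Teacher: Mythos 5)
Your proposal is correct and follows essentially the same route as the paper: both derive the entropy identity by dividing the internal-energy equation by $\theta$ (you phrase it via the Gibbs relation with $s=C_v\log\theta+R\log v+\tfrac{4}{3}av\theta^3$; the paper does the equivalent computation with $e_\theta\theta_t+\theta p_\theta u_y$), integrate over $(0,1)\times(0,t)$ so the flux term vanishes by $\theta_y|_{y=0,1}=0$, and then control the surviving term $\int_0^1 v\theta^3\,dy$ by H\"older using Lemma \ref{Lemma1} and \eqref{freelemma33}. Your write-up is a bit more explicit about the exactness of the entropy differential and about which earlier bounds absorb which terms, but there is no substantive difference.
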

\begin{proof}
According to the expression of $p$ and $e$, we thereby compute
\begin{equation*}
e_{\theta}\theta_t+\theta p_{\theta}u_y=\frac{\lambda
u_y^2}{v}+\frac{\mu |\mathbf{w}_y|^2}{v}+\frac{\nu
|\mathbf{b}_y|^2}{v}+\left(\frac{\kappa}{v}\theta_y\right)_y,
\end{equation*}
which implies that
\begin{equation*}
\begin{split}
&\frac{d}{dt}\int_0^1\left(C_v \log \theta+R \log
v+\frac43av\theta^3\right)dy\\
&=\int_0^1\left(\frac{\lambda
u_y^2}{v\theta}+\frac{\mu |\mathbf{w}_y|^2}{v\theta}+\frac{\nu
|\mathbf{b}_y|^2}{v\theta}+
\frac{\kappa\theta_y^2}{v\theta^2}\right)dy.
\end{split}
\end{equation*}
Integrating that over $(0,1)\times (0,t)$, it yields
\begin{equation*}
U(t)+\int_0^tV(\tau)d\tau\leq C\left(1+\int_0^1v\theta^3dy\right).
\end{equation*}
On the other hand, one has by \eqref{freelemma33}
\begin{equation*}
\begin{split}
\int_0^1v\theta^3dy&\leq \left(\int_0^1v\theta^4dy
\right)^{\frac34}\left(\int_0^1vdy\right)^{\frac14}\\[2mm]
&\leq C(T),
\end{split}
\end{equation*}
which ends the proof in conjunction with \eqref{lemma1}.
\end{proof}

Furthermore, we can deduce some  a priori estimates for $\theta$ and
$\mathbf{b}$ for all time.
\begin{lemma}\label{Lemma6}
We have
\begin{equation}
\int_0^t\max_{[0,1]}\theta^{q+4}(y,s)ds\leq C(T),\qquad  \quad q\geq 0, \label{lemma61}
\end{equation}
and
\begin{equation}
\int_0^t||\mathbf{b}||_{L^{\infty}([0,1])}^2ds\leq C(T),
\label{lemma62}
\end{equation}
for $0<t<T$.
\end{lemma}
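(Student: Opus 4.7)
The plan is to bound both maxima by the trick: write $f(y,t)$ as $f(y_\ast,t)+\int_0^1 |f_y|\,dy$ where $y_\ast=y_\ast(t)$ is chosen so that $f(y_\ast,t)$ is pointwise small (via the mean value theorem and the $L^1$-in-$y$ bounds from Lemmas 3.1--3.2), then control the $y$-derivative using Cauchy--Schwarz against the dissipation in $V(t)$ supplied by the entropy inequality \eqref{lemma3}.

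For \eqref{lemma61}, I would set $A=(q+4)/2$ and work with $\theta^A$. Since $\int_0^1 \theta^4\,dy\le C$, for each $t$ there is $y_\ast(t)\in[0,1]$ with $\theta(y_\ast,t)\le C$, hence $\theta^A(y_\ast,t)\le C$. Then
\[
\max_{[0,1]}\theta^A(y,t)\;\le\;C+A\int_0^1 \theta^{(q+2)/2}|\theta_y|\,dy.
\]
The key split is $\theta^{(q+2)/2}|\theta_y|=\sqrt{(1+\theta^q)\theta_y^2/(v\theta^2)}\cdot\sqrt{v\theta^{q+4}/(1+\theta^q)}$. By Cauchy--Schwarz,
\[
\int_0^1\theta^{(q+2)/2}|\theta_y|\,dy\;\le\;\Bigl(\int_0^1\frac{(1+\theta^q)\theta_y^2}{v\theta^2}\,dy\Bigr)^{1/2}\Bigl(\int_0^1\frac{v\theta^{q+4}}{1+\theta^q}\,dy\Bigr)^{1/2}.
\]
The second integrand is dominated by $v\theta^4+v$, so it is bounded by $\int_0^1 e\,dy+\int_0^1 v\,dy\le C(T)$ by Lemma 3.1 and \eqref{freelemma33}. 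Squaring and using $\kappa\ge\kappa_1(1+\theta^q)$ from \eqref{heatconductivity} gives
\[
\max_{[0,1]}\theta^{q+4}(y,t)\;\le\;C(T)+C(T)\int_0^1\frac{\kappa\,\theta_y^2}{v\theta^2}\,dy.
\]
Integrating in $t$ and invoking \eqref{lemma3} produces \eqref{lemma61}.

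For \eqref{lemma62}, the same idea applies directly. From \eqref{magnetic} there is $y_\ast(t)$ with $|\mathbf{b}(y_\ast,t)|^2\le C$, so
\[
\max_{[0,1]}|\mathbf{b}|^2(y,t)\;\le\;C+2\int_0^1|\mathbf{b}||\mathbf{b}_y|\,dy
\;\le\;C+C\int_0^1 v\theta|\mathbf{b}|^2\,dy+C\int_0^1\frac{|\mathbf{b}_y|^2}{v\theta}\,dy,
\]
after Cauchy--Schwarz and Young. Integrating in time, the last term is absorbed by $\int_0^t V\,ds\le C(T)$. For the middle term I would use $\int_0^1 v|\mathbf{b}|^2\,dy\le C$ (Lemma 3.1) to get $\int_0^1 v\theta|\mathbf{b}|^2\,dy\le C\max_{[0,1]}\theta$, whence
\[
\int_0^t\int_0^1 v\theta|\mathbf{b}|^2\,dy\,ds\;\le\;C\int_0^t\max_{[0,1]}\theta\,ds\;\le\;C(T),
\]
by Hölder applied to \eqref{lemma61} (with $q\ge 0$ so that $q+4\ge 1$). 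This yields \eqref{lemma62}.

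The main obstacle I anticipate is arranging the Cauchy--Schwarz split in step two so that the ``remainder'' integral in the temperature bound is controlled for every $q\ge 0$; the naive choice of writing $\theta_y^2/(v\theta^2)$ alone leaves an exponent that is too large for the radiative estimate $\int v\theta^4\,dy\le C$ to absorb, while using $\theta^{q-2}\theta_y^2/v$ alone fails when $q<2$. The weight $(1+\theta^q)/\theta^2$ in $V(t)$ is exactly what bridges both regimes, and matching it with $v\theta^{q+4}/(1+\theta^q)\lesssim v\theta^4+v$ is the crucial algebra. Once this is set up, \eqref{lemma62} follows by essentially the same template, using \eqref{lemma61} as the input for the coupling term between $\mathbf{b}$ and $\theta$.
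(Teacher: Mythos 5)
Your proof is correct and follows essentially the same route as the paper's: for \eqref{lemma61} you insert the weight $\kappa^{1/2}/(v^{1/2}\theta)$ from $V(t)$ against its reciprocal, apply Cauchy--Schwarz, and use $\kappa\geq\kappa_1(1+\theta^q)$ together with the radiative bound on $\int_0^1 v\theta^4\,dy$, then integrate against \eqref{lemma3}; for \eqref{lemma62} you mimic the same template with the dissipation term $\nu|\mathbf{b}_y|^2/(v\theta)$ and the output of \eqref{lemma61}. The only cosmetic departures from the paper are your choice of a baseline point $y_\ast$ via $\int_0^1\theta^4\,dy\leq C$ rather than the mean value theorem applied to $\int_0^1\theta\,dy$, and your explicit retention of the $+v$ remainder (which the paper drops since $\theta^{q+4}/(1+\theta^q)\leq\theta^4$ for $q\geq 0$).
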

\begin{proof}
There exists $y(t)$ by the mean value theorem such that
\begin{equation*}
\theta(y(t),t)=\int_0^1\theta dy,
\end{equation*}
where $y(t)\in [0,1]$ for each $t\in [0,T]$. Furthermore, it yields
by H\"{o}lder inequality
\begin{equation*}
\begin{split}
\theta(y,t)^{\frac{q+4}{2}} &=\left(\int_0^1\theta
dy\right)^{\frac{q+4}{2}}
+\frac{q+4}{2}\int_{y(t)}^y\theta(\xi,t)^{\frac{q+4}{2}-1}\theta_{\xi}(\xi,t)d\xi\\[2mm]
&\leq
C\left(1+\int_0^1\frac{\kappa^{\frac12}|\theta_y|}{v^{\frac12}\theta}
\cdot\frac{v^{\frac12}\theta^{\frac{q+4}{2}}}{\kappa^{\frac12}}dy\right)\\
&\leq
C\left[1+\left(\int_0^1\frac{v\theta^{q+4}}{\kappa}dy\right)^{\frac12}V(t)^{\frac12}\right]\\
&\leq C\left[1+\left(\int_0^1\frac{v\theta^{q+4}}{1+\theta^q}dy\right)^{\frac12}V(t)^{\frac12}\right]\\
&\leq
C\left[1+\left(\int_0^1v\theta^4dy\right)^{\frac12}V(t)^{\frac12}\right]\\
&\leq C\left(1+V(t)^{\frac12}\right).
\end{split}
\end{equation*}
Then taking square on both sides of  the above inequality, we can get \eqref{lemma61} by
integrating it over $[0,t]$ with the help of \eqref{lemma1} and
\eqref{lemma3}.

The proof of \eqref{lemma62} is directly deduced from
\eqref{lemma1} and \eqref{lemma61}. Indeed, recalling the boundary
conditions \eqref{boundary}, we find that
\begin{equation*}
\begin{split}
|\mathbf{b}|^2&=2\int_0^y\mathbf{b}\cdot\mathbf{b}_{\xi}(\xi,t)d\xi\\
&\leq
C\int_0^1\frac{\nu|\mathbf{b}_y|^2}{v\theta}dy+C\int_0^1v\theta|\mathbf{b}|^2dy,
\end{split}
\end{equation*}
or
\begin{equation*}
\begin{split}
\int_0^t||\mathbf{b}||^2_{L^{\infty}([0,1])}ds &\leq
C\int_0^t\int_0^1\frac{\nu|\mathbf{b}_y|^2}{v\theta}dyds+C\int_0^t\max_{[0,1]}\theta
\left(\int_0^1v|\mathbf{b}|^2dy\right)ds.\\
&\leq C(T).
\end{split}
\end{equation*}
This finishes the proof.
\end{proof}
With these preliminary processes, the lower bound of density can be
inferred that
\begin{lemma}One has
\begin{equation}\label{lowerbounddensity}
\rho(y,t)\geq  C(T).
\end{equation}
\end{lemma}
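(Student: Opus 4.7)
The plan is to exploit the Kazhikhov--Shelukhin type pointwise representation \eqref{lemma2V} that was already used to obtain the upper density bound \eqref{rho}, and extract from the same formula an \emph{upper} bound on $\ln v$, which is equivalent to the desired lower bound on $\rho = 1/v$. Since
$$\lambda \ln v(y,t) = \lambda \ln v_0(y) + \int_0^t \left(p + \tfrac{1}{2}|\mathbf{b}|^2\right) ds + \int_0^y (u - u_0)\, dx,$$
it is enough to bound each term on the right from above, uniformly in $(y,t) \in [0,1]\times[0,T]$. Two of the three pieces are essentially in hand; the substantive step is the time integral of the pressure.

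The spatial integral $\int_0^y (u-u_0)\,dx$ is uniformly bounded via Cauchy--Schwarz together with the basic energy estimate Lemma \ref{Lemma1}. For the magnetic contribution, \eqref{lemma62} in Lemma \ref{Lemma6} gives
$$\int_0^t |\mathbf{b}(y,s)|^2\, ds \leq \int_0^t \|\mathbf{b}(\cdot,s)\|_{L^{\infty}([0,1])}^{2}\, ds \leq C(T),$$
which handles the magnetic pressure at once.

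The crux is the thermodynamic pressure. Using the equation of state \eqref{p}, the upper density bound \eqref{rho}, and the elementary inequality $\theta \leq 1 + \theta^{q+4}$ valid for $\theta \geq 0$ and $q \geq 0$,
$$p = R\rho\theta + \tfrac{a}{3}\theta^{4} \leq C\bigl(1 + \theta^{q+4}\bigr),$$
so
$$\int_0^t p(y,s)\,ds \leq Ct + C\int_0^t \max_{[0,1]} \theta^{q+4}(\cdot,s)\, ds \leq C(T)$$
by \eqref{lemma61}. Assembling the three bounds yields $\lambda \ln v(y,t) \leq C(T)$ uniformly, and hence \eqref{lowerbounddensity}.

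The main obstacle is not this final assembly but the input \eqref{lemma61} on which it rests: for a non-radiative polytropic gas the analogous bound would only control a lower power of $\theta$, which is insufficient to close the pressure integral in \eqref{lemma2V}. It is precisely the radiative pressure $\tfrac{a}{3}\theta^{4}$, which contributes the extra term $\tfrac{4}{3}av\theta^{3}$ in the entropy identity behind Lemma \ref{lemma3}, that permits the improved bound \eqref{lemma61}. Once Lemma \ref{Lemma6} is available, the lower density bound follows from the Kazhikhov--Shelukhin representation almost immediately, exactly as sketched.
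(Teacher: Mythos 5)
Your proposal is correct and follows essentially the same route as the paper's proof: starting from the Kazhikhov--Shelukhin representation \eqref{lemma2V}, the spatial term is bounded via Lemma \ref{Lemma1}, the magnetic term via \eqref{lemma62}, and the pressure term via \eqref{lemma61}. Your small refinement of bounding $p\leq C(1+\theta^{q+4})$ so as to invoke \eqref{lemma61} literally is a cosmetic improvement over the paper's informal ``$\max\theta^4$'', but the argument is the same.
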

\begin{proof}
Recalling the equality \eqref{lemma2V}, we find that
\begin{equation*}
\begin{split}
\lambda \ln v&=\lambda \ln v_0(y)+\int_0^t(p+\frac12|\mathbf{b}|^2)d\tau
+\int_0^y(u-u_0)dx\\
&\leq
C(T)+C\int_0^t\max_{[0,1]}\theta^4ds+C\int_0^t||\mathbf{b}||_{L^{\infty}([0,1])}^2ds+C\int_0^1u^2dy\\
&\leq C(T),\label{}
\end{split}
\end{equation*}
where we have used Lemmas \ref{Lemma1} and \ref{Lemma6}.
\end{proof}

\subsection{Upper and lower bounds of temperature}
At first, we are in a position to deduce   a priori estimates of the derivatives of
$(u,\mathbf{w},\mathbf{b})$.
\begin{lemma}One has
\begin{equation}
\int_0^t\int_0^1\left(u_y^2+|\mathbf{w}_y|^2+|\mathbf{b}_y|^2\right)dyds\leq
C(T).\label{lemma881}
\end{equation}
\end{lemma}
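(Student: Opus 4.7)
The plan is to derive three coupled energy identities, one for each of $u$, $\mathbf{w}$, and $\mathbf{b}$, by multiplying \eqref{sub22}, \eqref{sub33}, \eqref{sub44} by $u$, $\mathbf{w}$, $\mathbf{b}$ respectively and integrating by parts on $[0,1]$. The boundary conditions \eqref{boundary}, namely $(\lambda u_y/v-p-\tfrac12|\mathbf{b}|^2)|_{\partial}=0$ (since $\mathbf{b}|_{\partial}=0$) and $\mathbf{w}|_{\partial}=\mathbf{b}|_{\partial}=0$, make every boundary contribution vanish. This produces
\begin{equation*}
\tfrac{1}{2}\tfrac{d}{dt}\!\!\int_0^1\!\! u^2 dy+\lambda\!\!\int_0^1\!\!\tfrac{u_y^2}{v}dy=\!\!\int_0^1\!\!\left(p+\tfrac12|\mathbf{b}|^2\right)u_y\,dy,\quad \tfrac{1}{2}\tfrac{d}{dt}\!\!\int_0^1\!\!|\mathbf{w}|^2 dy+\mu\!\!\int_0^1\!\!\tfrac{|\mathbf{w}_y|^2}{v}dy=-\!\!\int_0^1\!\!\mathbf{b}\cdot\mathbf{w}_y\,dy,
\end{equation*}
and, after multiplying \eqref{sub44} by $\mathbf{b}$ and using $v_t=u_y$,
\begin{equation*}
\tfrac{1}{2}\tfrac{d}{dt}\!\!\int_0^1\!\! v|\mathbf{b}|^2 dy+\nu\!\!\int_0^1\!\!\tfrac{|\mathbf{b}_y|^2}{v}dy=-\!\!\int_0^1\!\!\mathbf{w}\cdot\mathbf{b}_y\,dy-\tfrac12\!\!\int_0^1\!\! u_y|\mathbf{b}|^2 dy.
\end{equation*}

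\noindent Since $v$ is uniformly bounded above and below by \eqref{rho} and \eqref{lowerbounddensity}, it suffices to bound the left-hand sides with $1/v$ weights. I would control every source term on the right by Cauchy--Schwarz/Young, absorbing the resulting $u_y^2/v$, $|\mathbf{w}_y|^2/v$, $|\mathbf{b}_y|^2/v$ into the dissipations with a small parameter $\varepsilon$. The remaining pieces are then integrated in time. The pressure contribution is handled via $p^2\leq C(1+\theta^8)$, and then
\begin{equation*}
\int_0^t\!\!\int_0^1 \theta^8\,dyds\leq \int_0^t\max_{[0,1]}\theta^4\cdot\int_0^1\theta^4 dy\,ds\leq C\int_0^t\max_{[0,1]}\theta^4 ds\leq C(T),
\end{equation*}
where I use \eqref{coro1} together with $\theta^4\leq 1+\theta^{q+4}$ and \eqref{lemma61}. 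Similarly, the cross term $|\mathbf{b}|^2 u_y$ is dominated in $L^1_{t,y}$ through $\int_0^1|\mathbf{b}|^4 dy\leq\|\mathbf{b}\|^2_{L^\infty}\int_0^1|\mathbf{b}|^2 dy$ and \eqref{magnetic}--\eqref{lemma62}; the couplings $\mathbf{b}\cdot\mathbf{w}_y$ and $\mathbf{w}\cdot\mathbf{b}_y$ are absorbed by $\varepsilon$-Young using the already-controlled quantities $\int v|\mathbf{b}|^2 dy\leq C$ and $\int|\mathbf{w}|^2 dy\leq C$ from Lemma \ref{Lemma1}.

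\noindent Summing the three identities with suitably small $\varepsilon$ and integrating over $[0,t]$ gives
\begin{equation*}
\int_0^t\!\!\int_0^1\!\!\left(\tfrac{u_y^2}{v}+\tfrac{|\mathbf{w}_y|^2}{v}+\tfrac{|\mathbf{b}_y|^2}{v}\right)dyds\leq C(T),
\end{equation*}
and the desired bound \eqref{lemma881} follows from the uniform two-sided bounds on $v$. The main technical obstacle I anticipate is the cross term $\int_0^t\!\int_0^1 u_y|\mathbf{b}|^2 dy ds$ arising in the $\mathbf{b}$-energy: it couples the velocity gradient with the magnetic field and must be closed without recourse to any a priori bound on $u_y$ in $L^\infty_t L^2_y$; the remedy is to dump it into the $u_y^2/v$ dissipation already produced by the $u$-equation and use $\int_0^t\int_0^1|\mathbf{b}|^4 dyds\leq C(T)$ from \eqref{lemma62}. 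The pressure--velocity coupling is the other delicate point, and is exactly the place where the improved temperature integrability \eqref{lemma61} (valid for $q\geq 0$) enters the argument.
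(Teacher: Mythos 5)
Your proposal is correct, but it takes a genuinely different route from the paper's proof, which uses the internal energy equation instead of the mechanical--magnetic energy. The paper simply integrates \eqref{sub55} in the form $e_t+pu_y=\frac{\lambda u_y^2}{v}+\frac{\mu|\mathbf{w}_y|^2}{v}+\frac{\nu|\mathbf{b}_y|^2}{v}+\bigl(\frac{\kappa}{v}\theta_y\bigr)_y$ over $[0,1]\times[0,t]$, so the boundary term in $\theta_y$ drops, $\int_0^1(e-e_0)\,dy$ is bounded by Lemma \ref{Lemma1}, and the only remaining source is $\int_0^t\int_0^1 pu_y\,dyds$. You instead extract the kinetic and magnetic energies directly from \eqref{sub22}--\eqref{sub44}. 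Both routes converge on the same obstruction, namely the $pu_y$ work term, and both close it the same way: Young's inequality against the $u_y^2/v$ dissipation, $p^2\lesssim 1+\theta^8$, and then $\int_0^t\int_0^1\theta^8\,dyds\leq\int_0^t\max_{[0,1]}\theta^4\int_0^1\theta^4\,dy\,ds\leq C(T)$ via \eqref{coro1} and \eqref{lemma61}.

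One genuine improvement you should notice in your own argument: the cross terms you flag as the main technical obstacle actually cancel \emph{identically} upon summing the three identities, so the detour through $\int_0^t\int_0^1|\mathbf{b}|^4\,dyds$ and the $\varepsilon$-absorption of $\mathbf{b}\cdot\mathbf{w}_y$, $\mathbf{w}\cdot\mathbf{b}_y$ is unnecessary. Indeed
\begin{equation*}
\int_0^1\bigl(\mathbf{b}\cdot\mathbf{w}_y+\mathbf{w}\cdot\mathbf{b}_y\bigr)\,dy=\bigl[\mathbf{b}\cdot\mathbf{w}\bigr]_0^1=0,
\end{equation*}
and the $\tfrac12|\mathbf{b}|^2u_y$ term from the $u$-identity cancels exactly the $-\tfrac12 u_y|\mathbf{b}|^2$ term from the $\mathbf{b}$-identity. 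After summation the only surviving source is $\int_0^1 pu_y\,dy$, and the remainder of your argument applies verbatim. This cancellation is precisely the energy-conserving structure of the MHD coupling (one can of course also see that your mechanical--magnetic balance and the paper's internal-energy balance add up to the total energy conservation used in Lemma \ref{Lemma1}). What the paper's route buys is brevity, since \eqref{lemma81} already packages all three dissipation rates in one scalar equation; what your route buys is that it makes the magnetic--velocity coupling structure transparent without invoking $e$ at all.
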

\begin{proof}
 The equality \eqref{sub55}
can be rewritten as
\begin{equation}
e_t+pu_y=\frac{\lambda u_y^2}{v}+\frac{\mu
|\mathbf{w}_y|^2}{v}+\frac{\nu
|\mathbf{b}_y|^2}{v}+\left(\frac{\kappa}{v}\theta_y\right)_y.
\label{lemma81}
\end{equation}
Then integrating \eqref{lemma81} over $[0,1]\times [0,t]$, we
get from \eqref{coro1} and \eqref{lemma61}
\begin{equation*}
\begin{split}
&\int_0^t\int_0^1\left(\frac{\lambda u_y^2}{v}+\frac{\mu
|\mathbf{w}_y|^2}{v}+\frac{\nu |\mathbf{b}_y|^2}{v}\right)dyds\\
&=\int_0^1(e-e_0)dy+\int_0^t\int_0^1pu_ydyds\\
&\leq C+\frac12\int_0^t\int_0^1\frac{\lambda
u_y^2}{v}dyds+C\int_0^t\int_0^1p^2dyds\\
&\leq C+\frac12\int_0^t\int_0^1\frac{\lambda u_y^2}{v}dyds+
C\int_0^t\max_{[0,1]}\theta^4\left(\int_0^1\theta^4dy\right)ds\\
&\leq C(T)+\frac12\int_0^t\int_0^1\frac{\lambda u_y^2}{v}dyds,
\end{split}
\end{equation*}
which ends the proof according to \eqref{lemma61} and \eqref{lowerbounddensity}.
\end{proof}
\begin{lemma}\label{Lemma9}The following inequalities hold for $\mathbf{b}$
\begin{equation}
\int_0^t\int_0^1|\mathbf{b}\cdot\mathbf{b}_y|^2dyds\leq
C(T),\label{lemma92}
\end{equation}
 and
\begin{equation}
 \int_0^t\int_0^1|\mathbf{b}|^8dyds\leq C(T).\label{lemma93}
\end{equation}
when $t\in [0,T]$.
\end{lemma}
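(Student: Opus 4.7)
The plan is to extract both estimates from a fourth-power magnetic energy identity for $P(t):=\int_0^1 v|\mathbf{b}|^4\,dy$, closed by a Gronwall argument whose coefficients are $L^1_t$-integrable thanks to \eqref{lemma881}. First I would test the induction equation \eqref{sub44} against $\mathbf{b}|\mathbf{b}|^2$, use $v_t=u_y$ together with the pointwise identity $(v\mathbf{b})_t\cdot\mathbf{b}|\mathbf{b}|^2 = \tfrac14\partial_t(v|\mathbf{b}|^4) + \tfrac34 u_y|\mathbf{b}|^4$, and integrate by parts in the diffusion term (the boundary contribution vanishes by \eqref{boundary}) to obtain
\[
\frac{1}{4} P'(t) + \frac{3}{4}\int_0^1 u_y|\mathbf{b}|^4\,dy + \int_0^1\frac{\nu}{v}\left(|\mathbf{b}|^2|\mathbf{b}_y|^2 + 2(\mathbf{b}\cdot\mathbf{b}_y)^2\right)dy = \int_0^1 \mathbf{w}_y\cdot\mathbf{b}|\mathbf{b}|^2\,dy.
\]
Since $\rho=1/v$ is bounded above and below by \eqref{rho} and \eqref{lowerbounddensity}, the diffusion term dominates $c\int(\mathbf{b}\cdot\mathbf{b}_y)^2\,dy$ with some $c>0$.

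To close the identity, I would invoke $\mathbf{b}|_{y=0,1}=0$ together with \eqref{magnetic} to get the one-dimensional Sobolev bound $\|\mathbf{b}\|_\infty^2 \le 2\|\mathbf{b}\|_{L^2}\|\mathbf{b}_y\|_{L^2} \le C\|\mathbf{b}_y\|_{L^2}$, and then the elementary interpolations $\|\mathbf{b}\|_{L^6}^6 \le \|\mathbf{b}\|_\infty^4\|\mathbf{b}\|_{L^2}^2$ and $\|\mathbf{b}\|_{L^8}^8 \le \|\mathbf{b}\|_\infty^4\|\mathbf{b}\|_{L^4}^4$ to deduce
\[
\|\mathbf{b}\|_{L^6}^3\le C\|\mathbf{b}_y\|_{L^2},\qquad \|\mathbf{b}\|_{L^8}^4 \le C\|\mathbf{b}_y\|_{L^2}\,P(t)^{1/2}.
\]
Cauchy--Schwarz followed by Young's inequality then yields
\[
\left|\int_0^1 u_y|\mathbf{b}|^4 dy\right| + \left|\int_0^1 \mathbf{w}_y\cdot\mathbf{b}|\mathbf{b}|^2 dy\right| \le C\,f(t)\,(1 + P(t)),
\]
where $f(t) := \|u_y\|_{L^2}^2 + \|\mathbf{w}_y\|_{L^2}^2 + \|\mathbf{b}_y\|_{L^2}^2$ lies in $L^1(0,T)$ by \eqref{lemma881}. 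Gronwall's lemma then gives $\sup_{[0,T]} P(t) \le C(T)$, and integrating the differential inequality in time produces \eqref{lemma92}.

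For \eqref{lemma93}, the boundary condition $\mathbf{b}(0,t)=0$ yields $|\mathbf{b}(y,t)|^2 \le 2\int_0^1 |\mathbf{b}\cdot\mathbf{b}_y|\,dy$, hence $\|\mathbf{b}\|_\infty^4 \le 4\|\mathbf{b}\cdot\mathbf{b}_y\|_{L^2}^2$. Combined with the $L^\infty_t L^4_y$-bound $\int_0^1|\mathbf{b}|^4 dy\le C(T)$ (immediate from $P(t)\le C(T)$ and \eqref{lowerbounddensity}), this gives $\int_0^1|\mathbf{b}|^8 dy \le C(T)\|\mathbf{b}\cdot\mathbf{b}_y\|_{L^2}^2$; integrating in time and inserting \eqref{lemma92} closes \eqref{lemma93}.

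The principal obstacle is the cross term $\int u_y|\mathbf{b}|^4 dy$ induced by $v_t = u_y$: a symmetric Young inequality would generate $\|u_y\|_{L^2}^4$, which is not known to lie in $L^1_t$. The asymmetric interpolation $\|\mathbf{b}\|_{L^8}^4 \le \|\mathbf{b}\|_\infty^2\|\mathbf{b}\|_{L^4}^2$ is essential: one factor becomes $C\|\mathbf{b}_y\|_{L^2}\in L^2_t$ while the other, $\|\mathbf{b}\|_{L^4}^2$, becomes $P(t)^{1/2}$ and joins the Gronwall variable, producing a linear Gronwall inequality with $L^1_t$ coefficient rather than a quartic-in-time forcing.
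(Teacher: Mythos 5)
Your argument is correct, but it closes the estimate by a genuinely different route from the paper. Both proofs start from the same weighted fourth-power identity, obtained by testing \eqref{sub44} against a multiple of $|\mathbf{b}|^2\mathbf{b}$ and integrating the diffusion term by parts. You close it by a Gronwall iteration on $P(t)=\int_0^1 v|\mathbf{b}|^4\,dy$: the one-dimensional Sobolev bound $\|\mathbf{b}\|_{\infty}^2\le C\|\mathbf{b}_y\|_{L^2}$ (from $\mathbf{b}|_{y=0,1}=0$ and \eqref{magnetic}) turns the cross terms into $L^1_t$-integrable coefficients multiplied by $1+P(t)$, giving $\sup_t P\le C(T)$ and hence \eqref{lemma92} by integrating the dissipation, with \eqref{lemma93} then derived afterward from \eqref{lemma92} together with the $L^\infty_tL^4_y$ bound. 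The paper proceeds in the opposite order: it first proves the auxiliary inequality $\int_0^t\int|\mathbf{b}|^8\,dyds\le C\int_0^t\int v^{-1}|\mathbf{b}|^2|\mathbf{b}_y|^2\,dyds+C(T)$ from the embedding $\max|\mathbf{b}|^6\le C\int|\mathbf{b}|^6\,dy+C\int|\mathbf{b}|^4|\mathbf{b}\cdot\mathbf{b}_y|\,dy$ and \eqref{magnetic}; then, in the magnetic energy identity, it integrates the $\mathbf{w}$-source by parts and closes by pointwise Young inequalities -- absorbing the $\int\int|\mathbf{b}|^8$ term through the auxiliary inequality and controlling the $\mathbf{w}$-term through the previously established $L^1_tL^\infty_y$ bound \eqref{lemma62} -- so the two estimates are produced simultaneously rather than sequentially. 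Both routes rest on the same underlying information; yours is somewhat more self-contained in that it does not invoke \eqref{lemma62} separately, regenerating the $L^\infty_y$ control of $\mathbf{b}$ directly from $\|\mathbf{b}_y\|_{L^2}\in L^2_t$ supplied by \eqref{lemma881}. One small remark on your concluding paragraph: the paper also handles $\int u_y|\mathbf{b}|^4\,dy$ by a symmetric Young inequality, but applied pointwise ($|u_y||\mathbf{b}|^4\le\varepsilon|\mathbf{b}|^8+C_\varepsilon u_y^2$), which produces $\int\int u_y^2$ rather than $\|u_y\|_{L^2}^4$; the $\|u_y\|_{L^2}^4$ difficulty you mention only arises if one first applies H\"older over $y$ and then tries to close with $P(t)$.
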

\begin{proof}
By \eqref{magnetic}, one has
\begin{equation*}
\begin{split}
&\int_0^t\int_0^1|\mathbf{b}|^8dyds\\
&\leq \int_0^t\max_{[0,1]}|\mathbf{b}|^6\left(\int_0^1|\mathbf{b}|^2dy\right)ds\\
&\leq
C\int_0^t\int_0^1|\mathbf{b}|^6dyds+C\int_0^t\int_0^1|\mathbf{b}|^4|\mathbf{b}\cdot\mathbf{b}_y|dyds\\
&\leq
\frac12\int_0^t\int_0^1|\mathbf{b}|^8dyds+C\int_0^t\int_0^1\frac{|\mathbf{b}\cdot\mathbf{b}_y|^2}{v}dyds+C(T),
\end{split}
\end{equation*}
which implies
\begin{equation}
\int_0^t\int_0^1|\mathbf{b}|^8dyds\leq
C\int_0^t\int_0^1\frac{|\mathbf{b}|^2|\mathbf{b}_y|^2}{v}dyds+C(T).\label{lemma91}
\end{equation}
Multiplying $4|\mathbf{b}|^2\mathbf{b}$ on both sides of
\eqref{sub44}, one has
\begin{equation*}
\begin{split}
(v|\mathbf{b}|^4)_t=\left(\frac{\nu
\mathbf{b}_y}{v}+\mathbf{w}\right)_y\cdot
4|\mathbf{b}|^2\mathbf{b}-3v_t|\mathbf{b}|^4,
\end{split}
\end{equation*}
which follows  from \eqref{lemma62} and \eqref{lemma881} that
\begin{equation*}
\begin{split}
&\int_0^1v|\mathbf{b}|^4dy+12\nu\int_0^t\int_0^1\frac{|\mathbf{b}|^2|\mathbf{b}_y|^2}{v}dyds\\
&=\int_0^1v|\mathbf{b}|^4(y,0)dy-12\int_0^t\int_0^1|\mathbf{b}|^2\mathbf{w}\cdot\mathbf{b}_ydyds
-3\int_0^t\int_0^1u_y|\mathbf{b}|^4dyds\\
&\leq \varepsilon
\int_0^t\int_0^1\frac{|\mathbf{b}|^2|\mathbf{b}_y|^2}{v}dyds
+C(\varepsilon)\int_0^t||\mathbf{b}||_{L^{\infty}([0,1])}^2\left(\int_0^1|\mathbf{w}|^2dy\right)ds\\
&+\varepsilon\int_0^t\int_0^1|\mathbf{b}|^8dyds+C(\varepsilon)\int_0^t\int_0^1u_y^2dyds+C(T)\\
&\leq \varepsilon
\int_0^t\int_0^1\frac{|\mathbf{b}|^2|\mathbf{b}_y|^2}{v}dyds+\varepsilon\int_0^t\int_0^1|\mathbf{b}|^8dyds+C(T).
\end{split}
\end{equation*}
Substituting  \eqref{lemma91} into the above inequality, we deduce
\eqref{lemma92} and \eqref{lemma93} by taking sufficiently small $\varepsilon$.
\end{proof}

In the following, we first verify that the  temperature is dominated by velocity and magnetic fields.
\begin{lemma}\label{lemma09} One has
\begin{equation*}
\begin{split}
&\int_0^1\theta^8dy+\int_0^t\int_0^1\kappa \theta^3\theta_y^2dyds\\
&\leq C(T)+
C(T) \left(\int_0^t\int_0^1\mathbf{w}_{yy}^2dyds\right)^{\frac12}+C(T)
\left(\int_0^t\int_0^1\mathbf{b}_{yy}^2dyds\right)^{\frac12}.
\end{split}
\end{equation*}
\end{lemma}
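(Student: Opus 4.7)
My plan is to multiply the energy equation \eqref{sub55} by $\theta^4$ and integrate over $[0,1]$. Using the decomposition $e=C_v\theta+av\theta^4$, the continuity identity $v_t=u_y$, $p=R\theta/v+\tfrac{a}{3}\theta^4$, and integrating by parts on the heat-conduction term (with $\theta_y|_{bd}=0$), the left-hand side assembles into
\[
\frac{d}{dt}\int_0^1\Bigl(\tfrac{C_v}{5}\theta^5+\tfrac{a}{2}v\theta^8\Bigr)dy+4\int_0^1\frac{\kappa\theta^3\theta_y^2}{v}dy,
\]
which, thanks to the lower bound on $v$ from \eqref{lowerbounddensity}, already produces the two quantities on the left-hand side of the claimed inequality. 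The right-hand side gathers mechanical terms of the form $\int u_y\theta^{5+j}/v^?\,dy$ coming from $-pu_y\theta^4$ and from the algebraic rearrangement of $\partial_t(v\theta^8)=u_y\theta^8+v(\theta^8)_t$, together with the viscous-dissipation terms $\int(\lambda u_y^2+\mu|\mathbf w_y|^2+\nu|\mathbf b_y|^2)\theta^4/v\,dy$.

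Next I would dispose of the $u_y$-linear terms by Cauchy--Schwarz, e.g.\ $|\int u_y\theta^8\,dy|\leq\varepsilon\int u_y^2\theta^4/v\,dy+C(\varepsilon)\int v\theta^{12}\,dy$, and bound $\int v\theta^{12}\leq(\max_{[0,1]}\theta^4)\int v\theta^8\,dy$; after integration in $t$, the factor $\int_0^t\max\theta^{q+4}\,ds\leq C(T)$ from Lemma \ref{Lemma6} together with \eqref{lemma881} and the prior bounds in Lemmas \ref{Lemma1}--\ref{Lemma4} absorbs these contributions into $C(T)$ and a fraction of the dissipation. The $\int u_y^2\theta^4/v\,dy$ piece from the bulk dissipation is treated the same way, using $\max\theta^4\leq C+CV(t)^{1/2}$ in the spirit of the proof of Lemma \ref{Lemma6}.

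The crux is to convert the remaining viscous dissipations into expressions involving $\mathbf w_{yy}$ and $\mathbf b_{yy}$. Since $\mathbf w=\mathbf b=0$ at $y=0,1$, integration by parts in $y$ gives
\[
\int_0^1\frac{\mu|\mathbf w_y|^2\theta^4}{v}dy=-\int_0^1\mathbf w\cdot\frac{\mu\mathbf w_{yy}\theta^4}{v}dy-\int_0^1\mathbf w\cdot\mathbf w_y\Bigl(\frac{\mu\theta^4}{v}\Bigr)_y dy,
\]
and analogously for $\mathbf b$. The leading piece is controlled by $C\|\mathbf w\|_{L^\infty}(\int\mathbf w_{yy}^2\,dy)^{1/2}(\int\theta^8\,dy)^{1/2}$. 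Integrating in time and using Poincar\'e (available because $\mathbf w|_{bd}=0$) together with \eqref{lemma881} to bound $\int_0^t\|\mathbf w\|_{L^\infty}^2\,ds\leq C(T)$, a further Cauchy--Schwarz in time yields a bound of the schematic form $C(T)\sup_{s\leq t}(\int\theta^8\,dy)^{1/2}\bigl(\int_0^t\int\mathbf w_{yy}^2\,dyds\bigr)^{1/2}$, and similarly for $\mathbf b$. The subsidiary pieces containing $(\theta^4/v)_y=4\theta^3\theta_y/v-\theta^4 v_y/v^2$ produce terms absorbable into the dissipation $\int\kappa\theta^3\theta_y^2/v\,dy$ (with the $v_y$ factor deferred to the companion $\|\rho_x\|_{L^2}$ estimate of Lemma \ref{lemma10}).

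The main obstacle I expect is the final absorption. All contributions combine into an inequality of the form $M(t)\leq C(T)+C(T)M(t)^{1/2}[Y_w+Y_b]$ with $M(t)=\int\theta^8\,dy(t)+\int_0^tD\,ds$ and $Y_{\mathbf w},Y_{\mathbf b}$ denoting the two square-roots on the right-hand side of the claimed bound. A Young-type absorption $C(T)M^{1/2}Y\leq \tfrac{1}{2}M+C(T)^2Y^2$ lifts the $M^{1/2}$ into the left-hand side. The bookkeeping of powers of $\theta$ through the integration-by-parts and Cauchy--Schwarz steps is delicate: one must arrange matters so that the factors that are not $\mathbf w_{yy}$ or $\mathbf b_{yy}$ are all controllable by $\max_{[0,1]}\theta^{q+4}$ (so Lemma \ref{Lemma6} applies) or by the dissipation $\int\kappa\theta^3\theta_y^2/v\,dy$ itself, with no stray $\max\theta$-power remaining uncontrolled; this is where the careful choice of the weight $\theta^4$ (neither $\theta^3$ nor $\theta^5$) is essential.
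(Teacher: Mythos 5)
Your overall strategy (multiply \eqref{sub55} by $\theta^4$ and integrate) matches the paper's, but the way you handle the viscous-dissipation terms does not yield the claimed inequality, and the discrepancy is not cosmetic.

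The paper never integrates by parts in $y$ on $\int_0^1 \mu|\mathbf{w}_y|^2\theta^4/v\,dy$. Instead it exploits that $\int_0^1\theta^4\,dy\leq C$ is already known (\eqref{coro1}), writing
$\int_0^1 \mu|\mathbf{w}_y|^2\theta^4/v\,dy \leq \|\mathbf{w}_y\|_{L^\infty}^2\int_0^1\mu\theta^4/v\,dy\leq C\|\mathbf{w}_y\|_{L^\infty}^2$,
then using $\|\mathbf{w}_y\|_{L^\infty}^2\leq C\|\mathbf{w}_y\|_{L^2}^2+C\|\mathbf{w}_y\|_{L^2}\|\mathbf{w}_{yy}\|_{L^2}$, Cauchy--Schwarz in $t$, and the earlier bound $\int_0^t\int_0^1|\mathbf{w}_y|^2\leq C(T)$ from \eqref{lemma881} to land on $C(T)+C(T)\bigl(\int_0^t\int_0^1\mathbf{w}_{yy}^2\bigr)^{1/2}$. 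The crucial point is that the $\theta^4$ weight is absorbed into an \emph{a priori} bounded quantity, so the estimate is linear in the square-roots.

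Your integration by parts converts the dissipation into $\int_0^1\mathbf{w}\cdot\frac{\mu\mathbf{w}_{yy}\theta^4}{v}\,dy$, and Cauchy--Schwarz in $y$ then unavoidably produces $\bigl(\int_0^1\theta^8\,dy\bigr)^{1/2}$ --- precisely the quantity you are trying to estimate, which is \emph{not} a priori bounded. You acknowledge this: after collecting everything you reach $M\leq C(T)+C(T)M^{1/2}[Y_{\mathbf{w}}+Y_{\mathbf{b}}]$, and your Young-type absorption returns $M\leq C(T)\bigl(1+\int_0^t\int\mathbf{w}_{yy}^2+\int_0^t\int\mathbf{b}_{yy}^2\bigr)$. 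That is strictly weaker than the stated lemma, which has the \emph{square roots} of those double integrals. The weaker version does not suffice downstream: Lemma \ref{lemma10} would then only give $\int_0^1 v_y^2\,dy\leq C(T)+\varepsilon(Y_{\mathbf{w}}^2+Y_{\mathbf{b}}^2)$, and in the proof of \eqref{freelemma131} the product with $\int_0^t\|\mathbf{w}_y\|_{L^\infty}^2\,ds\lesssim C(T)+C(T)Y_{\mathbf{w}}$ produces cubic terms $\varepsilon Y_{\mathbf{w}}^3$, $\varepsilon Y_{\mathbf{w}}Y_{\mathbf{b}}^2$ that cannot be absorbed into the left-hand side ($\sim Y_{\mathbf{w}}^2+Y_{\mathbf{b}}^2$). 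So the bootstrapping breaks.

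A secondary issue: for the $u_y$ terms, the paper handles $\int_0^t\int|u_y|^3$ via the $L^p$ parabolic estimate for \eqref{LP}, which is a clean closed step. Your alternative via $\max_{[0,1]}\theta^4\leq C+CV(t)^{1/2}$ leads to $\int_0^t V^{1/2}\int u_y^2\,dy\,ds$, and neither H\"older split of that product is controlled by the lemmas available at this stage (only $\int_0^t\int u_y^2$ is bounded, not $\sup_t\int u_y^2$ or $\int_0^t(\int u_y^2)^2$).

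To fix the proof, drop the integration by parts on the dissipation, keep the $\theta^4$ weight under the integral, use the uniform bound on $\int_0^1\theta^4\,dy$, and control $\int_0^t\|\mathbf{w}_y\|_{L^\infty}^2\,ds$ by interpolation against the already-known $L^2_tL^2_y$ bound on $\mathbf{w}_y$, as the paper does.
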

\begin{proof}
Multiplying $\theta^4$ on both sides of \eqref{sub55}, and then
integrating it over $[0,1]\times [0,t]$, it leads to
\begin{equation*}
\begin{split}
&\int_0^1\theta^8dy+\int_0^t\int_0^1\kappa
\theta^3\theta_y^2dyds\\
&\leq
C\int_0^t\int_0^1(\theta^8|u_y|+\theta^4u_y^2)dyds+C\int_0^t\int_0^1\left(\frac{\mu
\mathbf{w}_y^2}{v}+\frac{\nu \mathbf{b}_y^2}{v}\right)\cdot
\theta^4dyds\\
&\leq
C+C\int_0^t\int_0^1(\theta^{12}+|u_y|^3)dyds+C\int_0^t\left(||\mathbf{w}_y||_{L^{\infty}}^2+||\mathbf{b}_y||_{L^{\infty}}^2\right)\int_0^1\theta^4dyds\\
&\leq
C+C\int_0^t\int_0^1(\theta^{12}+|u_y|^3)dyds\\
&+C\left(\int_0^t\int_0^1\mathbf{w}_{yy}^2dyds\right)^{\frac12}+C
\left(\int_0^t\int_0^1\mathbf{b}_{yy}^2dyds\right)^{\frac12}.
\end{split}
\end{equation*}
On the other hand, we get from \eqref{sub22}
\begin{equation}
\left\{
\begin{array}{llllllllll}
h_t=\frac{\lambda}{v}h_{yy}-(p+\frac12|\mathbf{b}|^2),\\[2mm]
h |_{t=0}=h_0(y),\\[2mm]
h|_{y=0,1}=0,\label{LP}
\end{array}
\right.
\end{equation}
where $h=\int_0^yu d\xi$. The standard $L^p$ estimates of solutions
to linear parabolic problem yield
\begin{equation*}
\begin{split}
\int_0^t\int_0^1|u_y|^3dyds
&=\int_0^t\int_0^1|h_{yy}|^3dyds\\
&\leq C \left(1+\int_0^t\int_0^1 (p^3+|\mathbf{b}|^6)dyds\right)\\
&\leq C(T) \left(1+\int_0^t\int_0^1 \theta^{12} dyds\right)
\end{split}
\end{equation*}
and then
\begin{equation*}
\begin{split}
&\int_0^1\theta^8dy+\int_0^t\int_0^1\kappa\theta^3\theta_y^2dyds\\
&\leq
C+C\left(\int_0^t\int_0^1\mathbf{w}_{yy}^2dyds\right)^{\frac12}+C
\left(\int_0^t\int_0^1\mathbf{b}_{yy}^2dyds\right)^{\frac12}\\
&+C\int_0^t\int_0^1\theta^{12}dyds+C\int_0^t\int_0^1|u_y|^3dyds\\
&\leq
C(T)\left(1+\int_0^t\int_0^1\theta^{12}dyds\right)+C\left(\int_0^t\int_0^1\mathbf{w}_{yy}^2dyds\right)^{\frac12}+C
\left(\int_0^t\int_0^1\mathbf{b}_{yy}^2dyds\right)^{\frac12}\\
&\leq
C(T)+C\int_0^t\max_{[0,1]}\theta^4\int_0^1\theta^8dyds\\
&+C\left(\int_0^t\int_0^1\mathbf{w}_{yy}^2dyds\right)^{\frac12}+C
\left(\int_0^t\int_0^1\mathbf{b}_{yy}^2dyds\right)^{\frac12}.
\end{split}
\end{equation*}
It finishes the proof by Gr\"{o}nwall inequality and \eqref{lemma61}.
\end{proof}
The following lemma also declare a relationship of density and velocity, as well as magnetic fields.
\begin{lemma}\label{lemma10}For any $\varepsilon>0$, it satisfies that
\begin{equation*}
\begin{split}
&\int_0^1v_y^2dy+\int_0^t\int_0^1\theta v_y^2dyds\\
&\leq C(T)
+\varepsilon\left(\int_0^t\int_0^1\mathbf{w}_{yy}^2dyds\right)^{\frac12}+\varepsilon
\left(\int_0^t\int_0^1\mathbf{b}_{yy}^2dyds\right)^{\frac12}.
\end{split}
\end{equation*}
\end{lemma}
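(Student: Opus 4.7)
The plan is to adapt the classical Kazhikhov--Shelukhin effective-flux representation to the present MHD-with-radiation setting. Set $\psi(y,t):=\lambda v_y/v - u$. Integrating \eqref{sub22} in $y$ from $0$ to $y$ and using the boundary condition \eqref{boundary} together with $\mathbf{b}|_{y=0}=0$, one gets $\lambda u_y/v = p + \tfrac{1}{2}|\mathbf{b}|^2 + \int_0^y u_t\, d\xi$. Since $u_y/v = (\log v)_t$, differentiating in $y$ yields the clean identity $\psi_t = p_y + \mathbf{b}\cdot\mathbf{b}_y$. Multiplying by $\psi$ and integrating over $[0,1]\times[0,t]$ then produces $\tfrac{1}{2}\int_0^1 \psi^2\,dy$ on the left, which, together with the $L^2$ bound on $u$ from Lemma \ref{Lemma1} and the two-sided density bounds \eqref{rho} and \eqref{lowerbounddensity}, is equivalent to $\int_0^1 v_y^2\,dy$ plus a harmless constant.

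The expansion $p_y = (R/v)\theta_y + (4a/3)\theta^3\theta_y - (R\theta/v^2)v_y$ is the crux: pairing the last term with the $\lambda v_y/v$ portion of $\psi$ produces the good contribution $R\lambda\int_0^t\!\!\int_0^1 \theta v_y^2/v^3\,dy\,ds$ on the left, which by \eqref{rho} and \eqref{lowerbounddensity} is equivalent to $\int\int \theta v_y^2$. The remaining terms are handled as follows. The cross term $\int\int (R\theta u/v^2)v_y$ is absorbed into the good term by Young's inequality, leaving $\int_0^t \max_y\theta\cdot\|u\|_{L^2}^2\,ds$, which is finite by Lemmas \ref{Lemma1} and \ref{Lemma6}. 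The magnetic contribution $\int\int \psi\,\mathbf{b}\cdot\mathbf{b}_y$ is bounded via Cauchy--Schwarz and Lemma \ref{Lemma9}'s estimate $\int\int |\mathbf{b}\cdot\mathbf{b}_y|^2\le C(T)$, leaving $\int_0^t\|\psi\|_{L^2}^2\,ds$ to be closed by Gr\"onwall. The temperature-gradient pieces $\int\int \psi\bigl[R/v+(4a/3)\theta^3\bigr]\theta_y$ are split by Young's inequality so that a factor $|\theta^{1/2} v_y|$ feeds the good term and the remainder, of the form $\theta^\alpha \theta_y^2$, is controlled by $\int\int \kappa\theta^3\theta_y^2$ and $\int_0^1 \theta^8\,dy$ supplied by Lemma \ref{lemma09}. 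Since the right-hand side of Lemma \ref{lemma09} already carries the square roots $(\int\int\mathbf{w}_{yy}^2)^{1/2}$ and $(\int\int\mathbf{b}_{yy}^2)^{1/2}$, a rescaling of the Young parameter delivers these with arbitrarily small prefactor $\varepsilon$, matching the statement. Gr\"onwall applied to $\int_0^1 \psi^2\,dy$ then closes the estimate.

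The main obstacle is the radiative integral $\int\int \psi\theta^3\theta_y$: the naive splitting $\theta^3\theta_y\, v_y \lesssim \varepsilon\,\theta v_y^2 + C\theta^5 \theta_y^2$ produces $\theta^5\theta_y^2$, which is not directly dominated by $\int\int \kappa\theta^3\theta_y^2$ when $q=0$. One must instead distribute the excess powers of $\theta$ between the time-integrable pointwise maximum $\int_0^t \max\theta^{q+4}\,ds \le C(T)$ from Lemma \ref{Lemma6} and the $L^\infty_t L^1_y$ bound on $\theta^8$ from Lemma \ref{lemma09}, so as to avoid any $L^\infty$ control on $\theta$ (unavailable at this stage) while preserving the arbitrarily small coefficient $\varepsilon$ of the half-norms of $\mathbf{w}_{yy}$ and $\mathbf{b}_{yy}$.
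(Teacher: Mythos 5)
Your proposal follows the same overall route as the paper: introduce the effective-flux variable $\psi=\lambda v_y/v-u$ (the paper uses $u-\lambda v_y/v$), derive $\psi_t=p_y+\mathbf b\cdot\mathbf b_y$ from \eqref{sub22}, multiply by $\psi$ and integrate, extract the good term $\int\!\int\lambda R\theta v_y^2/v^3$ from the $-R\theta v_y/v^2$ piece of $p_y$, control the cross term, the magnetic term (Lemma \ref{Lemma9}), and the temperature-gradient term (the entropy estimate \eqref{lemma3} and Lemma \ref{lemma09}), then close by Gr\"onwall. The observation that Lemma \ref{lemma09}'s $C(T)$-prefactors can be converted into arbitrarily small ones by rescaling the Young parameter is also exactly what the paper does.

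However, there is a genuine gap precisely where you flag it, in the temperature-gradient integral $\int\!\int\psi\bigl[R/v+\tfrac{4a}{3}\theta^3\bigr]\theta_y$. The split described in the body of the proposal, feeding $|\theta^{1/2}v_y|$ into the good term, leaves $\theta^5\theta_y^2$ from the radiative piece (and $\theta_y^2/\theta$ from the ideal-gas piece); when $q=0$ neither is dominated by the available dissipation terms $\int\!\int\kappa\theta^3\theta_y^2$ and $\int\!\int\kappa\theta_y^2/\theta^2$ without pointwise control on $\theta$, which is not yet available. The workaround you sketch, distributing powers between $\int_0^t\max\theta^{q+4}\,ds$ and $\int_0^1\theta^8\,dy$, is too vague to close the estimate, and in fact the $\int_0^1\theta^8\,dy$ bound plays no role here. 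The paper's fix is cleaner: do not peel off the $\lambda v_y/v$ portion of $\psi$ for this term. Pair the entire $\psi$ with the gradient,
\begin{equation*}
\left|\psi\left(\frac{R}{v}+\frac{4a}{3}\theta^3\right)\theta_y\right|
\leq C\,\frac{\kappa\theta_y^2}{\theta^2}
+\varepsilon\,\kappa\theta^3\theta_y^2
+C_\varepsilon\,\frac{\theta^2+\theta^3}{\kappa}\,\psi^2.
\end{equation*}
The first term integrates to $C(T)$ by \eqref{lemma3}; the second is absorbed through Lemma \ref{lemma09} and supplies the $\varepsilon$-small square roots in the statement; the third sits inside the Gr\"onwall kernel, whose time integral is finite because
$\int_0^t\max_{[0,1]}\frac{\theta^2+\theta^3}{\kappa}\,ds\leq C\left(1+\int_0^t\max_{[0,1]}\theta^{q+4}\,ds\right)\leq C(T)$
by \eqref{heatconductivity} and Lemma \ref{Lemma6}. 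With this replacement for the problematic split, your argument closes exactly as you intended.
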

\begin{proof}
We can rewrite the equation \eqref{sub22} as follows
\begin{equation}
\left(u-\frac{\lambda
v_y}{v}\right)_t=-\left(p+\frac12|\mathbf{b}|^2\right)_y.
\label{lemma101}
\end{equation}
Multiplying $\left(u-\frac{\lambda}{v}v_y\right)$ on both sides of
\eqref{lemma101} and integrating it over $(0,1)\times (0,t)$, we
find that
\begin{equation}
\begin{split}
&\frac12\int_0^1\left(u-\frac{\lambda}{v}v_y\right)^2dy+\int_0^t\int_0^1\frac{\lambda R \theta}{v^3}v_y^2dyds\\
=&\frac12\int_0^1\left(u-\frac{\lambda}{v}v_y\right)^2(y,0)dy+\int_0^t\int_0^1\frac{R\theta uv_y}{v^2}dyds\\
&-\int_0^t\int_0^1\left[\left(\frac{R}{v}
+\frac{4a}{3}\theta^3\right)\theta_y+\mathbf{b}\cdot\mathbf{b}_y
\right]\left(u-\frac{\lambda}{v}v_y\right)dyds.\label{lemma102}
\end{split}
\end{equation}
To complete the proof, it only evaluates all terms on the right hand
side of \eqref{lemma102}. Particularly,
\begin{equation}
\begin{split}
&\int_0^t\int_0^1\frac{R\theta uv_y}{v^2}dyds\\
&\leq \varepsilon \int_0^t\int_0^1\theta
v_y^2dyds+C_{\varepsilon}\int_0^t\max_{[0,1]}\theta\cdot\left(\int_0^1u^2dy\right)ds\\
&\leq \varepsilon \int_0^t\int_0^1\theta
v_y^2dyds+C_{\varepsilon}\int_0^t\max_{[0,1]}\theta(y,s)ds\\
&\leq \varepsilon \int_0^t\int_0^1\theta v_y^2dyds+C(T),
\label{lemma103}
\end{split}
\end{equation}
for any $\varepsilon >0$ and further
\begin{equation}
\begin{split}
&\left|\int_0^t\int_0^1\left[\left(\frac{R}{v}+\frac{4a}{3}\theta^3\right)\theta_y\right]
\left(u-\frac{\lambda}{v}v_y\right)dyds\right|\\[2mm]
&\leq  C
\int_0^t\int_0^1\frac{\kappa\theta_y^2}{\theta^2}dyds+\varepsilon\int_0^t\int_0^1\kappa\theta^3\theta_y^2dyds\\
&+\int_0^t\int_0^1\frac{\theta^2+\theta^3}{\kappa}\left(u-\frac{\lambda}{v}v_y\right)^2dyds\\[2mm]
&\leq  C+\varepsilon\int_0^t\int_0^1\kappa\theta^3\theta_y^2dyds
+\int_0^t\max_{[0,1]}\frac{\theta^2+\theta^3}{\kappa}\cdot\int_0^1\left(u-\frac{\lambda}{v}v_y\right)^2dyds.
\end{split}
\end{equation}
In addition, by \eqref{lemma92}, we also obtain
\begin{equation}
\begin{split}
&\left|\int_0^t\int_0^1\mathbf{b}\cdot\mathbf{b}_y\cdot\left(u-\frac{\lambda}{v}v_y\right)dyds\right|\\
&\leq
C\int_0^t\int_0^1|\mathbf{b}\cdot\mathbf{b}_y|^2dyds+C\int_0^t\int_0^1\left(u-\frac{\lambda}{v}v_y\right)^2dyds\\
&\leq
C(T)+C\int_0^t\int_0^1\left(u-\frac{\lambda}{v}v_y\right)^2dyds.\label{lemma104}
\end{split}
\end{equation}
Finally, substituting \eqref{lemma103}--\eqref{lemma104} into
\eqref{lemma102}, it leads to
\begin{equation*}
\begin{split}
&\int_0^1\left(u-\frac{\lambda}{v}v_y\right)^2dy+\int_0^t\int_0^1\theta v_y^2dyds\\
&\leq
C(T)+\varepsilon\int_0^t\int_0^1\kappa\theta^3\theta_y^2dyds\\
&+C\int_0^t\left(\max_{[0,1]}\frac{\theta^2+\theta^3}{\kappa}+1\right)
\cdot\int_0^1\left(u-\frac{\lambda}{v}v_y\right)^2dyds,
\end{split}
\end{equation*}
which ends the proof by Gr\"{o}nwall inequality and Lemma
\ref{lemma09} .
\end{proof}
Combining Lemma \ref{lemma09} and Lemma \ref{lemma10}, we can give a priori estimates of velocity and magnetic fields,
which play an important role to include the constant heat conductivity.
\begin{lemma}It holds that
\begin{equation}\label{freelemma131}
\begin{split}
&||\mathbf{b}||_{L^{\infty}((0,1)\times(0,T))}+\int_0^1(|\mathbf{b}_y|^2+|\mathbf{w}_y|^2)dy\\
& +\int_0^t\int_0^1(|\mathbf{b}_y|^4+|\mathbf{w}_y|^4+|\mathbf{b}_{yy}|^2+|\mathbf{w}_{yy}|^2)dyds
\leq
C(T).
\end{split}
\end{equation}
\end{lemma}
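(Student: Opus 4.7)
The plan is to close the chain of a priori estimates set up in Lemmas \ref{lemma09} and \ref{lemma10} by producing matching upper bounds on $\int\!\int \mathbf{w}_{yy}^2$ and $\int\!\int \mathbf{b}_{yy}^2$ that depend on the right-hand sides of those two lemmas only linearly. The first step is a standard higher-order energy identity: multiply \eqref{sub33} by $\mathbf{w}_t$ and \eqref{sub44} by $\mathbf{b}_t$ (exploiting the boundary data $\mathbf{w}=\mathbf{b}=0$ at $y=0,1$ so that the time-differentiated boundary terms vanish), integrate in $y$, and integrate by parts on the second-order diffusive terms. This yields
\begin{equation*}
\frac{d}{dt}\int_0^1\frac{\mu|\mathbf{w}_y|^2+\nu|\mathbf{b}_y|^2}{2v}\,dy
+\int_0^1\!\bigl(|\mathbf{w}_t|^2+v|\mathbf{b}_t|^2\bigr)\,dy
\le \text{cross terms}+\text{lower-order terms},
\end{equation*}
where the cross terms $\int \mathbf{b}_y\!\cdot\!\mathbf{w}_t$ and $\int \mathbf{w}_y\!\cdot\!\mathbf{b}_t$ are absorbed by Young's inequality into the left-hand side, and the lower-order terms contain factors of $u_y/v^2$ and $v_t/v^2=u_y/v^2$ multiplying $|\mathbf{w}_y|^2$ or $|\mathbf{b}_y|^2$ (plus a $|u_y|\,|\mathbf{b}|\,|\mathbf{b}_t|$ coming from $(v\mathbf{b})_t=v_t\mathbf{b}+v\mathbf{b}_t$).

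Next, using the equations pointwise I would convert the $L^2$ control of $\mathbf{w}_t,\mathbf{b}_t$ into the desired control of $\mathbf{w}_{yy},\mathbf{b}_{yy}$: from \eqref{sub33}–\eqref{sub44} and the uniform bounds $0<C^{-1}\le v\le C$ from \eqref{rho} and \eqref{lowerbounddensity},
\begin{equation*}
|\mathbf{w}_{yy}|+|\mathbf{b}_{yy}|\le C\bigl(|\mathbf{w}_t|+|\mathbf{b}_t|+|\mathbf{b}_y|+|\mathbf{w}_y|+|\mathbf{b}|\,|u_y|+|v_y|(|\mathbf{w}_y|+|\mathbf{b}_y|)\bigr).
\end{equation*}
Squaring and integrating in $t,y$ reduces everything to the energy quantities above plus the nonlinear term $\int_0^t\!\int_0^1 v_y^2(|\mathbf{w}_y|^2+|\mathbf{b}_y|^2)\,dyds$, which I will dominate by
\begin{equation*}
\int_0^t\bigl(\|\mathbf{w}_y\|_{L^\infty}^2+\|\mathbf{b}_y\|_{L^\infty}^2\bigr)\|v_y\|_{L^2}^2\,ds,
\end{equation*}
and then use the one-dimensional Sobolev interpolation $\|\mathbf{w}_y\|_{L^\infty}^2\le C(\|\mathbf{w}_y\|_{L^2}^2+\|\mathbf{w}_y\|_{L^2}\|\mathbf{w}_{yy}\|_{L^2})$ (and the same for $\mathbf{b}_y$) together with the bound $\|v_y\|_{L^2}^2\le C(T)+\varepsilon(\int\!\int\mathbf{w}_{yy}^2)^{1/2}+\varepsilon(\int\!\int\mathbf{b}_{yy}^2)^{1/2}$ from Lemma \ref{lemma10}.

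The main obstacle is precisely this last step: the quantity $\int\!\int\mathbf{w}_{yy}^2+\mathbf{b}_{yy}^2$ appears both on the left-hand side (what we want to bound) and, through $\|v_y\|_{L^2}$ and $\|\theta^8\|_{L^1}$, on the right-hand side. The key is to keep all such occurrences multiplied by a small constant $\varepsilon$ (by splitting with Young's inequality carefully whenever $\mathbf{w}_{yy}$ or $\mathbf{b}_{yy}$ appears in a cross term) so that they can be absorbed, and to treat the remaining time-integrated coefficients like $\int_0^t\|\mathbf{b}\|_{L^\infty}^2\,ds$, $\int_0^t\max_{[0,1]}\theta^{q+4}\,ds$ as integrable weights in a Gr\"onwall argument supplied by Lemmas \ref{Lemma6} and \ref{Lemma9}. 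Once this is closed, I will feed the resulting bounds on $\int\!\int\mathbf{w}_{yy}^2+\mathbf{b}_{yy}^2$ back into Lemmas \ref{lemma09} and \ref{lemma10} to get genuine $C(T)$ bounds on $\|\theta^8\|_{L^1}$ and $\|v_y\|_{L^2}$ as a by-product.

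Finally, the $L^\infty$ bound $\|\mathbf{b}\|_{L^\infty((0,1)\times(0,T))}\le C(T)$ follows from the one-dimensional Sobolev embedding combined with \eqref{magnetic} and the just-obtained control of $\int|\mathbf{b}_y|^2\,dy$. For the $L^4_{t,y}$ control of $\mathbf{w}_y$ and $\mathbf{b}_y$, I will use
\begin{equation*}
\int_0^t\!\int_0^1|\mathbf{w}_y|^4\,dyds\le \sup_{0\le s\le t}\|\mathbf{w}_y\|_{L^2}^2\int_0^t\|\mathbf{w}_y\|_{L^\infty}^2\,ds,
\end{equation*}
and the Sobolev interpolation again, which converts the estimates already obtained on $\sup_t\int|\mathbf{w}_y|^2$ and $\int\!\int\mathbf{w}_{yy}^2$ into the desired $L^4$ bound, and likewise for $\mathbf{b}_y$.
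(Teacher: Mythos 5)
Your plan is correct in spirit and does give the lemma, but it follows a genuinely different route from the paper's. The paper tests \eqref{sub33} directly with $\mathbf{w}_{yy}$ and, after rewriting \eqref{sub44} as $\mathbf{b}_t=-\frac{u_y}{v}\mathbf{b}+\frac{1}{v}\bigl(\mathbf{w}+\frac{\nu\mathbf{b}_y}{v}\bigr)_y$, tests with $\mathbf{b}_{yy}$. This produces the dissipation terms $\int_0^t\!\int_0^1\frac{\mu}{v}|\mathbf{w}_{yy}|^2$ and $\int_0^t\!\int_0^1\frac{\nu}{v^2}|\mathbf{b}_{yy}|^2$ immediately, with no conversion step needed. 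You instead test with $\mathbf{w}_t$ and $\mathbf{b}_t$, obtaining dissipation in $\int_0^t\!\int_0^1(|\mathbf{w}_t|^2+v|\mathbf{b}_t|^2)$, and then convert to the second derivatives pointwise via the equations. Both routes hit the same decisive nonlinearity $\int_0^t\!\int_0^1 v_y^2(|\mathbf{w}_y|^2+|\mathbf{b}_y|^2)\,dyds$, which is closed exactly as you describe: bound $\|\mathbf{w}_y\|_{L^\infty}^2\lesssim\|\mathbf{w}_y\|_{L^2}\|\mathbf{w}_{yy}\|_{L^2}$ (using $\int_0^1\mathbf{w}_y\,dy=0$), and feed Lemma \ref{lemma10}'s bound on $\|v_y\|_{L^2}^2$, so that every occurrence of $\int_0^t\!\int_0^1(|\mathbf{w}_{yy}|^2+|\mathbf{b}_{yy}|^2)$ on the right carries an $\varepsilon$ and can be absorbed; the remaining piece $C\sup_{(y,t)}|\mathbf{b}|^2\int_0^t\!\int_0^1 u_y^2$ is absorbed using the $1$D Sobolev estimate $\|\mathbf{b}\|_{L^\infty}^2\le C\|\mathbf{b}\|_{L^2}^2+C\|\mathbf{b}\|_{L^2}\|\mathbf{b}_y\|_{L^2}$ together with \eqref{magnetic}, just as the paper does.

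The extra cost of your route is that the $\mathbf{w}_t,\mathbf{b}_t$ multiplier creates the additional lower-order term $\int_0^t\!\int_0^1\frac{|u_y|}{v^2}(|\mathbf{w}_y|^2+|\mathbf{b}_y|^2)\,dyds$ that the paper's direct $\mathbf{w}_{yy},\mathbf{b}_{yy}$ multiplier simply does not produce. This term can be closed, e.g. by Gagliardo--Nirenberg $\|\mathbf{w}_y\|_{L^4}^2\lesssim\|\mathbf{w}_y\|_{L^2}^{3/2}\|\mathbf{w}_{yy}\|_{L^2}^{1/2}+\|\mathbf{w}_y\|_{L^2}^2$ followed by Young and Gr\"onwall with the integrable weight $\int_0^t\|u_y\|_{L^2}^{4/3}\,ds$ (controlled via \eqref{lemma881}); but note that the Gr\"onwall weights you name, $\int_0^t\|\mathbf{b}\|_{L^\infty}^2\,ds$ and $\int_0^t\max_{[0,1]}\theta^{q+4}\,ds$, do not actually appear in this lemma -- there is no $\theta$ in \eqref{sub33}--\eqref{sub44}, and the relevant integrable coefficient here is a power of $\|u_y\|_{L^2}$, not of $\theta$ or $\mathbf{b}$. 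So your outline is sound, and the final $L^4_{t,y}$ and $L^\infty_{t,y}$ conclusions follow from the $H^2$ bounds exactly as you say; just replace the cited Gr\"onwall weights by the $u_y$-based ones and the argument closes.
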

\begin{proof}
Multiplying \eqref{sub33} by $\mathbf{w}_{yy}$ and integrating it
over $[0,1]\times [0,t]$, one has
\begin{equation*}
\begin{split}
&\frac12\int_0^1|\mathbf{w}_y|^2dy=\frac12\int_0^1|\mathbf{w}_y(y,0)|^2dy
-\int_0^t\int_0^1\left(\mathbf{b}+\frac{\mu
\textbf{w}_y}{v}\right)_y\cdot\mathbf{w}_{yy}dyds\\
&\leq C-C(T)\int_0^t\int_0^1|\mathbf{w}_{yy}|^2dyds+
C\int_0^t\int_0^1(|\mathbf{b}_y|+|v_y||\mathbf{w}_y|)|\mathbf{w}_{yy}|dyds\\
&\leq C-\frac{3C(T)}{4}\int_0^t\int_0^1|\mathbf{w}_{yy}|^2dyds
+C(T)\int_0^t\int_0^1|\mathbf{b}_y|^2dyds\\
&+C(T)\int_0^t\max_{[0,1]}|\mathbf{w}_y|^2\left(\int_0^1v_y^2dy\right)ds\\
&\leq
C-\frac{3C(T)}{4}\int_0^t\int_0^1|\mathbf{w}_{yy}|^2dyds\\
&+\left(\varepsilon\left(\int_0^t\int_0^1\mathbf{w}_{yy}^2dyds\right)^{\frac12}+\varepsilon
\left(\int_0^t\int_0^1\mathbf{b}_{yy}^2dyds\right)^{\frac12}\right)\times\left(\int_0^t\int_0^1\mathbf{w}_{yy}^2dyds\right)^{\frac12}\\
&\leq
C-\frac{C(T)}{2}\int_0^t\int_0^1|\mathbf{w}_{yy}|^2dyds+\varepsilon
\int_0^t\int_0^1\mathbf{b}_{yy}^2dyds.
\end{split}
\end{equation*}
 Thus,
\begin{equation}\label{wb}
\int_0^1|\mathbf{w}_y|^2dy+\int_0^t\int_0^1|\mathbf{w}_{yy}|^2dyds\leq
C+\varepsilon \int_0^t\int_0^1\mathbf{b}_{yy}^2dyds,
\end{equation}

On the other hand, we can also rewrite \eqref{sub44} as follows
\begin{equation*}
\mathbf{b}_t=-\frac{u_y}{v}\mathbf{b}+\frac{1}{v}\left(\mathbf{w}+\frac{\nu
\mathbf{b}_y}{v}\right)_y.
\end{equation*}
Following the same procedure as above, we get from \eqref{magnetic}
\begin{equation*}
\begin{split}
&\frac12\int_0^1|\mathbf{b}_y|^2dy\\
&\leq C-C\int_0^t\int_0^1|\mathbf{b}_{yy}|^2dyds\\
&+C\int_0^t\int_0^1(|u_y||\mathbf{b}|+|\mathbf{w}_y|+|v_y||\mathbf{b}_y|)|\mathbf{b}_{yy}|dyds\\
&\leq C-\frac{3C}{4}\int_0^t\int_0^1|\mathbf{b}_{yy}|^2dyds
+C\sup_{(y,t)\in(0,1)\times(0,t)}|\mathbf{b}|^2\int_0^t\int_0^1u_y^2dyds\\
&+C\int_0^t\max_{[0,1]}|\mathbf{b}_y|^2\bigg(\int_0^1v_y^2dy\bigg)ds\\
&\leq
C-\frac{3C}{4}\int_0^t\int_0^1|\mathbf{b}_{yy}|^2dyds+C\int_0^1|\mathbf{b}|^2dy+\frac14\int_0^1|\mathbf{b}_y|^2dy\\
&+\left(\varepsilon\left(\int_0^t\int_0^1\mathbf{w}_{yy}^2dyds\right)^{\frac12}+\varepsilon
\left(\int_0^t\int_0^1\mathbf{b}_{yy}^2dyds\right)^{\frac12}\right)\times \left(\int_0^t\int_0^1\mathbf{b}_{yy}^2dyds\right)^{\frac12}\\
&\leq
C(T)-\frac{C}{2}\int_0^t\int_0^1|\mathbf{b}_{yy}|^2dyds+\frac14\int_0^1|\mathbf{b}_y|^2dy+\varepsilon\int_0^t\int_0^1\mathbf{w}_{yy}^2dyds,
\end{split}
\end{equation*}
which implies
\begin{equation*}
\int_0^1|\mathbf{b}_y|^2dy+\int_0^t\int_0^1|\mathbf{b}_{yy}|^2dyds\leq
C(T)+\varepsilon\int_0^t\int_0^1\mathbf{w}_{yy}^2dyds.
\end{equation*}
This together with \eqref{wb} leads to
\begin{equation*}
\int_0^1(|\mathbf{w}_y|^2+|\mathbf{b}_y|^2)dy+\int_0^t\int_0^1(|\mathbf{w}_{yy}|^2+|\mathbf{b}_{yy}|^2)dyds\leq
C(T).
\end{equation*}
Similarly, we get
\begin{equation*}
\begin{split}
\int_0^t\int_0^1|\mathbf{w}_y|^4dyds&\leq
\int_0^t\max_{[0,1]}|\mathbf{w}_y|^2\left(\int_0^1|\mathbf{w}_y|^2dy\right)ds\\
&\leq C(T)\int_0^t\int_0^1(|\mathbf{w}_y|^2+|\mathbf{w}_{yy}|^2)dyds\\
&\leq C(T).
\end{split}
\end{equation*}
and
\begin{equation*}
\begin{split}
\int_0^t\int_0^1|\mathbf{b}_y|^4dyds&\leq
\int_0^t\max_{[0,1]}|\mathbf{b}_y|^2\left(\int_0^1|\mathbf{b}_y|^2dy\right)ds\\
&\leq C(T)\int_0^t\int_0^1(|\mathbf{b}_y|^2+|\mathbf{b}_{yy}|^2)dyds\\
&\leq C(T).
\end{split}
\end{equation*}
This completes the proof of the lemma.
\end{proof}
With \eqref{freelemma131} in hand, we thereby can check Lemma \ref{lemma09} and Lemma \ref{lemma10} again and deduce that
\begin{corollary}We have
\begin{equation}
\int_0^1v_y^2dy+\int_0^t\int_0^1\theta v_y^2dyds\leq C(T).\label{density}
\end{equation}
and
\begin{equation}
\int_0^t\max_{[0,1]}\theta^{q+13}ds+\int_0^1\theta^8dy+\int_0^t\int_0^1\kappa
\theta^3\theta_y^2dyds\leq C(T).\label{temperature}
\end{equation}
\end{corollary}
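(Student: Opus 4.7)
The plan is to use the newly established estimate \eqref{freelemma131}, which uniformly bounds $\int_0^t\int_0^1(|\mathbf{w}_{yy}|^2+|\mathbf{b}_{yy}|^2)dyds$ by $C(T)$, to revisit Lemma \ref{lemma09} and Lemma \ref{lemma10} and absorb the previously unknown right-hand sides. Concretely, plugging the bound $\int_0^t\int_0^1|\mathbf{w}_{yy}|^2dyds+\int_0^t\int_0^1|\mathbf{b}_{yy}|^2dyds\leq C(T)$ into the conclusion of Lemma \ref{lemma09} gives
\[
\int_0^1\theta^8\,dy+\int_0^t\int_0^1\kappa\,\theta^3\theta_y^2\,dy\,ds\leq C(T),
\]
and inserting the same bound into Lemma \ref{lemma10} (choosing $\varepsilon=1$, say) immediately produces
\[
\int_0^1v_y^2\,dy+\int_0^t\int_0^1\theta\,v_y^2\,dy\,ds\leq C(T).
\]
This already establishes \eqref{density} together with half of \eqref{temperature}.

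It remains to upgrade the $L^\infty_y$ control of $\theta$ to the exponent $q+13$. The plan is to repeat the mean-value-theorem argument that was used in Lemma \ref{Lemma6}, but powered by the stronger weighted gradient bound $\int_0^t\int_0^1\kappa\theta^3\theta_y^2\,dy\,ds\leq C(T)$ instead of $\int_0^t V(\tau)d\tau$. Pick $y(t)\in[0,1]$ with $\theta(y(t),t)=\int_0^1\theta\,dy\leq C$, set $\alpha=(q+13)/2$, and write
\[
\theta^{\alpha}(y,t)\leq C+\alpha\int_0^1\theta^{\alpha-1}|\theta_y|\,dy
\leq C+C\Bigl(\int_0^1\frac{\kappa\theta^3\theta_y^2}{v}\,dy\Bigr)^{1/2}\Bigl(\int_0^1\frac{v\,\theta^{2\alpha-5}}{\kappa}\,dy\Bigr)^{1/2}.
\]
Using $\kappa\geq\kappa_1(1+\theta^q)$, the lower bound on $\rho$, and the $L^1_y$ bound on $\theta^8$, the last integral is dominated by $\int_0^1(\theta^{2\alpha-5-q}+1)\,dy=\int_0^1(\theta^8+1)\,dy\leq C$, which is precisely why the sharp exponent $q+13$ appears. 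Squaring and integrating in time then yields
\[
\int_0^t\max_{[0,1]}\theta^{q+13}\,ds\leq C+C\int_0^t\int_0^1\kappa\,\theta^3\theta_y^2\,dy\,ds\leq C(T).
\]

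The only delicate point is the exponent balancing in the last step: the choice $\alpha=(q+13)/2$ is dictated by matching the $L^8_y$ bound on $\theta$ with the $\theta^q$ gain coming from $\kappa$, and it is the tightness of this match that makes the statement work uniformly in $q\geq 0$, including the constant-conductivity case $q=0$. Beyond that, the proof is simply a chain of substitutions into the two preceding lemmas, so no additional analytic machinery is required.
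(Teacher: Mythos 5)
Your proof is correct and follows exactly the route the paper intends: the paper dispatches the corollary in a single sentence by saying one should ``check Lemma \ref{lemma09} and Lemma \ref{lemma10} again'' with \eqref{freelemma131} in hand, which is precisely your substitution argument for \eqref{density} and the first half of \eqref{temperature}. Your mean-value-theorem derivation of $\int_0^t\max_{[0,1]}\theta^{q+13}\,ds\leq C(T)$, with the exponent balancing $2\alpha-5-q=8$ against the $L^1$ bound on $\theta^8$, is the same device the paper already used to prove \eqref{lemma61}, so you are simply making explicit a step the paper left unstated.
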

Similarly, using $L^p$ estimates of solutions to linear parabolic
problem again, we get from \eqref{lemma93} and  \eqref{LP}
\begin{equation}\label{velocity}
\begin{split}
\int_0^t\int_0^1|u_y|^4dyds
&=\int_0^t\int_0^1|w_{yy}|^4dyds\\
&\leq C \left(1+\int_0^t\int_0^1 (p^3+|\mathbf{b}|^8)dyds\right)\\
&\leq C \left(1+\int_0^t\int_0^1 \theta^{16} dyds\right)\\
&\leq C \left(1+\int_0^t\max_{[0,1]}\theta^8\int_0^1 \theta^{8}dyds\right)\\
&\leq C(T).
\end{split}
\end{equation}

In order  to get the upper bound of temperature,  we also need to establish higher order  a priori estimates of
$(v,u,\theta,\mathbf{w},\mathbf{b})$. Thus, we introduce some auxiliary
new variables motivated by \cite{kawohl}.
\begin{align}
&X:=\int_0^t\int_0^1(1+\theta^{q})\theta_t^2dyds,\label{X}\\
&Y:=\max_{0\leq t\leq
T}\int_0^1(1+\theta^{2q})\theta_y^2dy,\label{Y}\\
&Z:=\max_{0\leq t\leq T}\int_0^1u_{yy}^2dy.\label{Z}
\end{align}
By interpolation and embedding theorem, It follows from \eqref{Z} that
\begin{align}\label{boundvelocity}
&|u_y|^{(0)}\leq C(1+Z^{\frac38}),
\end{align}
where $|\cdot|^{(0)}=\sup|\cdot|$.

\begin{lemma}
Under the assumptions of Theorem \ref{thm}, we have
\begin{equation}
\max_{[0,1]\times[0,t]}\theta(y,s)\leq
C+CY^{\frac{1}{2(q+5)}},\label{temperature1}
\end{equation}
for $(y,t)\in (0,1)\times (0,T)$.
\end{lemma}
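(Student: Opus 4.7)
\bigskip

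\noindent\textbf{Proof plan for \eqref{temperature1}.}
The plan is to bound $\max\theta$ pointwise by integrating $\theta^{q+4}\theta_y$ from a ``good'' point where $\theta$ is a priori controlled, and then extract $Y^{1/2}$ from the integral via Cauchy--Schwarz with the natural weight $(1+\theta^{2q})^{1/2}$ suggested by the definition of $Y$.

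First, I would fix $t\in[0,T]$ and invoke the mean value theorem to select $y_0(t)\in[0,1]$ with
$\theta(y_0(t),t)=\int_0^1\theta(y,t)\,dy$.
By Lemma \ref{Lemma1} and the structure of $e$ (namely $e\geq C_v\theta$) together with the lower bound on $v$, this integral is bounded by a constant independent of $t$, so $\theta(y_0(t),t)^{q+5}\leq C$.

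Next, for arbitrary $(y,t)\in(0,1)\times(0,T)$ I would write the identity
\begin{equation*}
\theta(y,t)^{q+5}=\theta(y_0(t),t)^{q+5}+(q+5)\int_{y_0(t)}^{y}\theta^{q+4}\theta_\xi\,d\xi,
\end{equation*}
and estimate the remainder by Cauchy--Schwarz using the weight $(1+\theta^{2q})^{1/2}$:
\begin{equation*}
\left|\int_{y_0(t)}^{y}\theta^{q+4}\theta_\xi\,d\xi\right|
\leq \left(\int_0^1(1+\theta^{2q})\theta_y^2\,dy\right)^{1/2}
\left(\int_0^1\frac{\theta^{2(q+4)}}{1+\theta^{2q}}\,dy\right)^{1/2}.
\end{equation*}
The first factor is bounded by $Y^{1/2}$ by definition \eqref{Y}. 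For the second factor, the elementary inequality $\theta^{2q+8}/(1+\theta^{2q})\leq 1+\theta^{8}$ (splitting into $\theta\leq 1$ and $\theta\geq 1$) reduces it to $\int_0^1(1+\theta^8)\,dy$, which is bounded by $C(T)$ via \eqref{temperature}.

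Combining these, I obtain $\theta(y,t)^{q+5}\leq C+CY^{1/2}$, and taking the $(q+5)$-th root gives the claimed estimate $\max \theta\leq C+CY^{1/(2(q+5))}$. I do not anticipate any serious obstacle here: the proof is essentially a weighted one-dimensional Sobolev embedding, and the only delicate point is to realize that the exponent $8$ appearing in \eqref{temperature} is exactly what is needed to absorb the weighted integral $\int \theta^{2q+8}/(1+\theta^{2q})\,dy$, independently of $q\geq 0$, so the exponent $q+5$ (and hence the exponent $1/(2(q+5))$ on $Y$) matches the way $Y$ is defined.
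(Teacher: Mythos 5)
Your proof is correct and takes essentially the same approach as the paper: a weighted one-dimensional embedding for $\theta^{q+5}$, Cauchy--Schwarz with the weight $(1+\theta^{2q})^{1/2}$ matching the definition of $Y$, and the bound $\int_0^1(1+\theta)^8\,dy\leq C(T)$ coming from \eqref{temperature}. The only cosmetic difference is that you anchor the estimate at a mean-value point $y_0(t)$ controlled by Lemma \ref{Lemma1}, whereas the paper keeps the term $\int_0^1\theta^{q+5}\,dy$ and absorbs it by $\max\theta^{q+1}\int_0^1\theta^4\,dy\leq\tfrac12\max\theta^{q+5}+C$.
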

\begin{proof}
By the embedding theorem, we deduce that
\begin{equation*}
\max_{[0,1]}\theta^{q+5}(y,t)  \leq
C\int_0^1\theta^{q+5}dy+C\int_0^1(1+\theta)^{q+4}|\theta_y|dy,
\end{equation*}
which implies by \eqref{temperature} and H\"{o}lder inequality,
\begin{equation*}
\begin{split}
&\max_{[0,1]}\theta^{q+5}(y,t)\\
&\leq C\max_{[0,1]}\theta^{q+1}\int_0^1\theta^4
dy+C\int_0^1(1+\theta)^{q}|\theta_y|(1+\theta)^4
dy\\
&\leq \frac12 \max_{[0,1]}\theta^{q+5}
+C\left(\int_0^1(1+\theta)^{2q}\theta_y^2dy\right)^{\frac12}
\left(\int_0^1(1+\theta)^8dy\right)^{\frac12}+C\\
&\leq \frac12\max_{[0,1]} \theta^{q+5}+C(T)
\left(\int_0^1(1+\theta)^{2q}\theta_y^2dy\right)^{\frac12}+C(T)\\
&\leq\frac12\max_{[0,1]}\theta^{q+5} +CY^{\frac12}+C(T).
\end{split}
\end{equation*}
It finishes the proof.
\end{proof}


\begin{lemma}\label{XY}Furthermore, we have
\begin{equation}
X+Y\leq C(T)\left(1+Z^{\frac{q+5}{q+10}}\right). \label{Lemma14}
\end{equation}
\end{lemma}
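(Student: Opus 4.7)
The plan is to follow the Kawohl-type technique from \cite{kawohl}: multiply the energy equation \eqref{sub55} by a carefully chosen time-derivative test function so that the resulting integration by parts simultaneously produces the defining quantities for $X$ and $Y$ on the left, and then bound the three resulting error terms on the right using the $L^2$ and $L^4$ bounds on $(u_y,\mathbf{w}_y,\mathbf{b}_y)$ already established in \eqref{freelemma131}, \eqref{temperature}, \eqref{velocity}, together with the sup-bounds \eqref{temperature1} and \eqref{boundvelocity}.

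Concretely, rewrite \eqref{sub55} as $e_\theta\theta_t+(e_v+p)u_y=(K_y/v)_y+D$ with $e_\theta=C_v+4av\theta^3$, $e_v=a\theta^4$, $p=R\theta/v+a\theta^4/3$, dissipation $D=(\lambda u_y^2+\mu|\mathbf{w}_y|^2+\nu|\mathbf{b}_y|^2)/v$, and the primitive $K(\theta)=\int_0^\theta\kappa(s)\,ds$ (so that $K_y=\kappa\theta_y$ and $K_t=\kappa\theta_t$, and crucially $K_y^2\sim(1+\theta^{2q})\theta_y^2$ matches the weight in $Y$). Multiplying by $K_t$ and integrating over $(0,1)\times(0,t)$, the boundary condition $\theta_y\big|_{y=0,1}=0$ kills the boundary terms, and one integration by parts together with $(1/v)_t=-u_y/v^2$ yields
\begin{equation*}
\int_0^t\!\!\int_0^1 e_\theta\kappa\theta_t^2\,dy\,ds+\tfrac{1}{2}\int_0^1\tfrac{K_y^2}{v}\,dy\bigg|_0^t=\int_0^t\!\!\int_0^1\Bigl[DK_t-(e_v+p)u_yK_t-\tfrac{1}{2}K_y^2 u_y/v^2\Bigr]dy\,ds.
\end{equation*}
Since $e_\theta\kappa\gtrsim 1+\theta^q$, the LHS dominates $c\,X(t)+c\int K_y^2\,dy(t)-C$, and taking $\sup_{t\leq T}$ controls $c(X+Y)$ from below.

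The three RHS terms are estimated by Young's and H\"older's inequalities, with every factor $\max\theta^r$ converted to $Y^{r/(2(q+5))}$ via \eqref{temperature1}. (i) The dissipation term: $\int\!\!\int DK_t\leq\varepsilon X+C\int\!\!\int\kappa D^2$; here $\int\!\!\int D^2\leq C(T)$ by \eqref{freelemma131} and \eqref{velocity}, while $\max\theta^q\leq C(1+Y^{q/(2(q+5))})$, giving a pure-$Y^\gamma$ contribution with $\gamma<1$. (ii) The source term: $\int\!\!\int(e_v+p)u_yK_t\leq \varepsilon X+C\int\!\!\int(1+\theta^{q+8})u_y^2$, controlled by $\int u_y^2\,dy\leq C$ and $\max\theta^{q+8}\leq C(1+Y^{(q+8)/(2(q+5))})$, again with exponent $<1$ and thus absorbable. (iii) The convection term $\int\!\!\int K_y^2 u_y/v^2$ is the sole source of $Z$-dependence; using \eqref{boundvelocity} $|u_y|^{(0)}\leq C(1+Z^{3/8})$ together with a $1$D Gagliardo--Nirenberg interpolation of $K_y$ (the implicit $\|K_{yy}\|_{L^2}$ being estimated back through the heat equation), this contribution can be brought to the model form $C(T)\,Y^{q/(2(q+5))}Z^{1/2}$. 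Applying Young's inequality with the paired exponents $(p,p')=(2(q+5)/q,\,2(q+5)/(q+10))$ then gives $Y^{q/(2(q+5))}Z^{1/2}\leq \delta Y+C_\delta Z^{(q+5)/(q+10)}$, producing precisely the stated exponent; absorbing all $\varepsilon X$ and $\delta Y$ pieces into the LHS yields the conclusion.

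The main obstacle is the sharp exponent bookkeeping in step (iii): a naive estimate $\int\!\!\int K_y^2 u_y/v^2\lesssim (1+Z^{3/8})Y$ leaves a factor linear in $Y$ that cannot be split by Young's inequality, so one must work harder to extract a \emph{sub-linear} $Y$-power matched to the $Z$-power. The specific combination $Y^{q/(2(q+5))}Z^{1/2}$ is the only one for which Young's produces precisely $Z^{(q+5)/(q+10)}$, so locating and justifying this interpolation (via \eqref{temperature1}, \eqref{boundvelocity}, and Gagliardo--Nirenberg) is the delicate heart of the argument.
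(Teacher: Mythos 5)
Your argument follows the same Kawohl-type strategy as the paper (multiply the energy equation by a time-derivative of a primitive of $\kappa$, integrate by parts, and absorb the error terms), but with a genuinely different choice of primitive, and this choice creates a gap in generality that you should be aware of.

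You take $K(\theta)=\int_0^\theta\kappa(s)\,ds$, so that $K_y=\kappa\theta_y$, $K_t=\kappa\theta_t$, and the only error term involving higher regularity of $u$ is the convection term $\tfrac12\int\!\!\int K_y^2 u_y/v^2$, which you control via $|u_y|^{(0)}\le C(1+Z^{3/8})$. The paper instead uses $K(v,\theta)=\int_0^\theta\kappa(v,\xi)/v\,d\xi$; because this $K$ depends on $v$, the identity $K_{yt}=(\kappa\theta_y/v)_t+K_{vv}v_yu_y+K_vu_{yy}+(\kappa/v)_vv_y\theta_t$ produces the extra error terms $I_4,I_5,I_6$, and in particular $I_5=-\int\!\!\int(\kappa/v)\theta_yK_vu_{yy}$ brings $u_{yy}$ (hence $Z$) in directly. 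The two routes differ in what they buy. Your route is cleaner and, when it applies, even sharper: carrying out the naive estimate for term (iii) (no Gagliardo--Nirenberg needed, which is an unnecessary detour in your write-up), one gets $\int\!\!\int K_y^2|u_y|/v^2\le C(T)(1+Z^{3/8})\sup(1+\theta)^{q+2}\int\!\!\int\kappa\theta_y^2/\theta^2\le C(T)(1+Z^{3/8})(1+Y^{\frac{q+2}{2(q+5)}})$, and Young with the conjugate pair $(\tfrac{2(q+5)}{q+2},\tfrac{2(q+5)}{q+8})$ gives $\delta Y+C_\delta Z^{\frac{3(q+5)}{4(q+8)}}$, which is a \emph{smaller} power of $Z$ than $\tfrac{q+5}{q+10}$ for all $q\ge 0$. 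So your asserted intermediate form $Y^{q/(2(q+5))}Z^{1/2}$ is not what actually comes out, but the conclusion is still reached and is in fact stronger; the invocation of an ``implicit $\|K_{yy}\|_{L^2}$'' is superfluous.

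The gap: the hypothesis \eqref{heatconductivity} allows $\kappa=\kappa(\rho,\theta)$ to depend on the density, and the paper explicitly stresses this (see the model $\kappa=\kappa_1+4ac\theta^3/(\hat\kappa\rho)$ after \eqref{e}, and Remark 2.1 with $\kappa=\kappa_1+\kappa_2\theta^q/\rho$). Under that generality your primitive $K(\theta)=\int_0^\theta\kappa\,ds$ is not a well-defined function of $\theta$, the identities $K_y=\kappa\theta_y$ and $K_t=\kappa\theta_t$ fail, and the clean convection-only error structure disappears. Once you reintroduce the $v$-dependence of the primitive, the cross-derivative terms the paper calls $I_4,I_5,I_6$ reappear, $u_{yy}$ enters through $K_vu_{yy}$ in $K_{yt}$, and you are essentially back to the paper's computation. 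So your proof is correct and even slightly better than stated in the special case $\kappa=\kappa(\theta)$, but it does not establish Lemma \ref{XY} in the generality the theorem requires; you would need to either restrict to $\kappa$ independent of $\rho$ or supplement your argument with estimates for the additional $K_v$, $K_{vv}$ terms.
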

\begin{proof} We introduce the function as in \cite{kawohl,umeharaTani}
\begin{equation*}
K(v,\theta):=\int_0^{\theta}\frac{\kappa(v,\xi)}{v}d\xi.
\end{equation*}
The simple calculation leads to
\begin{align}
&K_t=\frac{\kappa}{v}\theta_t+K_vu_y, \label{K1}\\
&K_{yt}=\left(\frac{\kappa}{v}\theta_y\right)_t+K_{vv}v_{y}u_{y}+K_{v}u_{yy}+\left(\frac{\kappa}{v}\right)_vv_y
\theta_t,\label{K2}\\
&|K_v|, |K_{vv}|\leq C(1+\theta^{q+1}).\label{K3}
\end{align}
Multiplying \eqref{sub55} by $K_t$ and integrating it over
$(0,1)\times (0,t)$, we get
\begin{equation*}
\begin{split}
&\int_0^t\int_0^1\left(e_{\theta}\theta_t+\theta
p_{\theta}u_y-\frac{\lambda u_y^2}{v}-\frac{\mu
|\mathbf{w}_y|^2}{v}-\frac{\nu
|\mathbf{b}_y|^2}{v}\right)K_tdyds\\
&+\int_0^t\int_0^1\frac{\kappa}{v}\theta_y K_{yt}dyds=0,
\end{split}
\end{equation*}
or equivalently
\begin{equation}
\begin{split}
&\int_0^t\int_0^1\frac{\kappa e_{\theta}\theta_t^2}{v}dyds
+\int_0^t\int_0^1\frac{\kappa}{v}\theta_y\left(\frac{\kappa}{v}\theta_y\right)_tdyds\\
&=I_1+I_2+I_3+I_4+I_5+I_6, \label{lemma141}
\end{split}
\end{equation}
where $I_{i},(i=1,\cdots,6)$ is defined as follows
\begin{equation*}
\begin{split}
&I_1=-\int_0^t\int_0^1e_{\theta}\theta_tK_v u_ydyds\\[2mm]
&I_2=-\int_0^t\int_0^1\left(\theta p_{\theta}u_y-\frac{\lambda
u_y^2}{v}-\frac{\mu |\mathbf{w}_y|^2}{v}-\frac{\nu
|\mathbf{b}_y|^2}{v}\right)\frac{\kappa}{v}\theta_tdyds\\[2mm]
&I_3=- \int_0^t\int_0^1\left(\theta p_{\theta}u_y-\frac{\lambda
u_y^2}{v}-\frac{\mu |\mathbf{w}_y|^2}{v}-\frac{\nu
|\mathbf{b}_y|^2}{v}\right)K_vu_ydyds\\[2mm]
&I_4=-\int_0^t\int_0^1\frac{\kappa}{v}\theta_yK_{vv}v_yu_ydyds\\[2mm]
&I_5=-\int_0^t\int_0^1\frac{\kappa}{v}\theta_y K_v u_{yy}dyds\\[2mm]
&I_6=-\int_0^t\int_0^1\frac{\kappa}{v}\theta_y\left(\frac{\kappa}{v}\right)_v
v_{y}\theta_tdyds.
\end{split}
\end{equation*}

In the sequel, we will evaluate all terms
\eqref{lemma141} according to \eqref{K1}--\eqref{K3}.

Firstly, one has by \eqref{X} and \eqref{Y}
\begin{equation}
\int_0^t\int_0^1\frac{\kappa e_{\theta}\theta_t^2}{v}dyds \geq C
\int_0^t\int_0^1(1+\theta^3)(1+\theta^q)\theta_t^2dyds\geq CX,
\end{equation}
and
\begin{equation}
\begin{split}
&\int_0^t\int_0^1\frac{\kappa}{v}\theta_y\left(\frac{\kappa}{v}\theta_y\right)_{t}dyds\\
&=\frac12\int_0^1\left(\frac{\kappa}{v}\theta_y\right)^2dy
-\frac12\int_0^1\left(\frac{\kappa}{v}\theta_y\right)^2(y,0)dy\\
&\geq C\int_0^1(1+\theta^{q})^2\theta_y^2dy-C\geq CY-C.
\end{split}
\end{equation}
Secondly, we have by Cauchy-Schawtz inequality and \eqref{temperature1}
\begin{equation}\label{lemma105}
\begin{split}
&\left|I_1\right|=\left|\int_0^t\int_0^1e_{\theta}\theta_tK_v u_ydyds\right| \\
&\leq C\int_0^t\int_0^1(1+\theta)^{q+4}|\theta_t||u_y|dyds\\
&\leq \varepsilon X+C_{\varepsilon}\int_0^t\int_0^1(1+\theta)^{q+8}u_y^2dyds\\
&\leq \varepsilon
X+C_{\varepsilon}Y^{\frac{q+8}{2(q+5)}}\int_0^t\int_0^1u_y^2dyds\\
&\leq \varepsilon (X+Y)+C,
\end{split}
\end{equation}
for any fixed sufficiently small $\varepsilon>0$, and similarly, by recalling \eqref{freelemma131} and \eqref{velocity}, one has
\begin{equation}
\begin{split}
&\left|I_2\right|=\left|\int_0^t\int_0^1\left(\theta
p_{\theta}u_y-\frac{\lambda u_y^2}{v}-\frac{\mu
|\mathbf{w}_y|^2}{v}-\frac{\nu
|\mathbf{b}_y|^2}{v}\right)\frac{\kappa}{v}\theta_tdyds\right|\\
&\leq C\int_0^t\int_0^1\left[(1+\theta)^{q+4}|u_y\theta_t|
+(1+\theta)^q|\theta_t|(u_y^2+|\mathbf{w}_y|^2+|\mathbf{b}_y|^2)\right]dyds\\
&\leq \varepsilon
X+C_{\varepsilon}|(1+\theta)^{q+8}|^{(0)}\int_0^t\int_0^1u_y^2dyds\\
&+C_{\varepsilon}|(1+\theta)^{q}|^{(0)}\int_0^t\int_0^1\left(|u_y|^4+|\mathbf{w}_y|^4+|\mathbf{b}_y|^4\right)dyds\\
&\leq \varepsilon (X+Y)+C.
\end{split}
\end{equation}
 and
\begin{equation}
\begin{split}
&\left|I_3\right|=\left|\int_0^t\int_0^1\left(\theta
p_{\theta}u_y-\frac{\lambda u_y^2}{v}-\frac{\mu
|\mathbf{w}_y|^2}{v}-\frac{\nu
|\mathbf{b}_y|^2}{v}\right)K_vu_ydyds\right|\\
&\leq
C\int_0^t\int_0^1(1+\theta)^{q+5}u_y^2dyds+\int_0^t\int_0^1(1+\theta)^{q+1}
|u_y|^3dyds\\
&\qquad+C\int_0^t\int_0^1(1+\theta)^{q+1}|u_y|(|\mathbf{w}_y|^2+|\mathbf{b}_y|^2)dyds\\
&\leq C
|(1+\theta)^{q+5}|^{(0)}\int_0^t\int_0^1u_y^2dyds\\
&\qquad+|(1+\theta)^{q+1}|^{(0)}\int_0^t\int_0^1|u_y|^3dyds\\
&\qquad+C\int_0^t\int_0^1(1+\theta)^{q+1}|u_y|(|\mathbf{w}_y|^2+|\mathbf{b}_y|^2)dyds\\
&\leq CY^{\frac{q+5}{2(q+5)}}+
CY^{\frac{q+1}{2(q+5)}}\int_0^t\int_0^1(u_y^2+|\mathbf{w}_y|^4+|\mathbf{b}_y|^4)dyds+C\\
&\leq \varepsilon Y+C.
\end{split}
\end{equation}
Thirdly, it also yields in view of \eqref{lemma61} and \eqref{boundvelocity}
\begin{equation}
\begin{split}
&\left|I_4\right|=\left|\int_0^t\int_0^1\frac{\kappa}{v}\theta_yK_{vv}v_yu_ydyds\right|\\
&\leq
C\left(\int_0^t\int_0^1\frac{\kappa\theta_y^2}{\theta^2}dyds\right)^{\frac12}
\times\left(\int_0^t\int_0^1(1+\theta)^{3q+4}u_y^2v_y^2dyds\right)^{\frac12}\\
&\leq CZ^{\frac38}Y^{\frac{q}{2(q+5)}}\\
 &\leq \varepsilon Y+CZ^{\frac{3(q+5)}{4(q+10)}},
\end{split}
\end{equation}
and
\begin{equation}\label{lemma142}
\begin{split}
&\left|I_5\right|=\left|\int_0^t\int_0^1\frac{\kappa}{v}\theta_y K_v u_{yy}dyds\right|\\
&\leq C
\left(\int_0^t\int_0^1\frac{\kappa\theta_y^2}{\theta^2}dyds\right)^{\frac12}\cdot
\left(\int_0^t\int_0^1(1+\theta)^{(3q+4)}u_{yy}^2dyds\right)^{\frac12}\\
&\leq CZ^{\frac12}Y^{\frac{q}{2(q+5)}}\\
 &\leq \varepsilon
Y+CZ^{\frac{q+5}{q+10}}.
\end{split}
\end{equation}
Finally, one has
\begin{equation}\label{lemma106}
\begin{split}
&\left|I_6\right|=\left|\int_0^t\int_0^1\frac{\kappa}{v}\theta_y\left(\frac{\kappa}{v}\right)_v
v_{y}\theta_tdyds\right|\\
&\leq
C\int_0^t\int_0^1\left|\frac{\kappa}{v}\theta_y\right|(1+\theta)^q
|v_{y}\theta_t|dyds \\
&\leq \varepsilon
X+C_{\varepsilon}|(1+\theta)^{q}|^{(0)}\int_0^t\max_{[0,1]}\left(\frac{\kappa\theta_y}{v}\right)^2
\left(\int_0^1v_y^2dy\right)ds\\
&\leq \varepsilon X+C_{\varepsilon}Y^{\frac{q}{2(q+5)}}
\int_0^t\max_{[0,1]}\left(\frac{\kappa\theta_y}{v}\right)^2\left(\int_0^1v_y^2dy\right)ds\\
&\leq \varepsilon
X+C_{\varepsilon}Y^{\frac{q}{2(q+5)}}\int_0^t\max_{[0,1]}\left(\frac{\kappa\theta_y}{v}\right)^2ds.
\end{split}
\end{equation}

By the embedding theorem again, one has
\begin{equation*}
\begin{split}
&\int_0^t\max_{[0,1]}\left(\frac{\kappa\theta_y}{v}\right)^2ds\\
&\leq C\int_0^t\int_0^1\left(\frac{\kappa\theta_y}{v}\right)^2dyds
+C\int_0^t\int_0^1\left|\frac{\kappa\theta_y}{v}\cdot\left(\frac{\kappa\theta_y}{v}\right)_y\right|dyds\\
&\leq C|\kappa \theta^{2}|^{(0)}\int_0^t\int_0^1\frac{\kappa\theta_y^2}{\theta^2}dyds\\
&~~+C\left(\int_0^t\int_0^1\frac{\kappa\theta_y^2}{\theta^2}dyds\right)^{\frac12}\left(\int_0^t\int_0^1\kappa\theta^{2}
\left(\frac{\kappa\theta_y}{v}\right)_y^2dyds\right)^{\frac12}\\
 &\leq C
\bigg\{|(1+\theta)^{q+2}|^{(0)}\\
&~~+\left(\int_0^t\int_0^1(1+\theta)^{q+2}\left(e_{\theta}^2\theta_t^2
 +\theta^2
p_{\theta}^2u_y^2+u_y^4+|\mathbf{w}_y|^4+|\mathbf{b}_y|^4\right)dyds\right)^{\frac12}\bigg\}.
\end{split}
\end{equation*}
In particular, we also have
\begin{equation*}
\begin{split}
&\int_0^t\int_0^1(1+\theta)^{q+2}e_{\theta}^2\theta_{t}^2dyds\\
&\leq
C|(1+\theta)^{8}|^{(0)}\int_0^t\int_0^1(1+\theta)^{q}\theta_{t}^2dyds\\[2mm]
&\leq C \left(X+Y^{\frac{8}{2(q+5)}}X\right),
\end{split}
\end{equation*}
 and
\begin{equation*}
\begin{split}
&\int_0^t\int_0^1(1+\theta)^{q+2}\theta^2 p_{\theta}^2u_y^2dyds\\
&\leq C\int_0^t\int_0^1(1+\theta)^{q+10}u_y^2dyds\\
&\leq
C|(1+\theta)^{q+10}|^{(0)}\int_0^t\int_0^1u_y^2dyds\\
&\leq C\left(1+Y^{\frac{q+10}{2(q+5)}}\right),
\end{split}
\end{equation*}
and
\begin{equation*}
\begin{split}
&\int_0^t\int_0^1(1+\theta)^{q+2}\left(u_y^4+|\mathbf{w}_y|^4+|\mathbf{b}_y|^4\right)dyds\\
&\leq C|(1+\theta)^{q+2}|^{(0)}\int_0^t\int_0^1\left(u_y^4+|\mathbf{w}_y|^4+|\mathbf{b}_y|^4\right)dyds\\
&\leq C\left(1+Y^{\frac{q+2}{2(q+5)}}\right),
\end{split}
\end{equation*}
which implies
\begin{equation*}
\begin{split}
&Y^{\frac{q}{2(q+5)}}\int_0^t\max_{[0,1]}\left(\frac{\kappa
\theta_y}{v}\right)^2ds
\\&\leq
C\left(1+Y^{\frac{q+1}{q+5}}+X^{\frac12}Y^{\frac{q+4}{2(q+5)}}
+Y^{\frac{3q+10}{4(q+5)}}\right)\\[2mm]
&\leq \varepsilon (X+Y),
\end{split}
\end{equation*}
which together with \eqref{lemma106} shows that
\begin{equation}
 |I_6|\leq \varepsilon (X+Y).
 \end{equation}
 This together with \eqref{lemma141}--\eqref{lemma142} completes the proof.
\end{proof}
With the above a priori estimates, we can succeed in obtaining the upper bound of temperature, which is another essential contribution of the paper.
\begin{lemma}
The following  a priori estimates hold,
\begin{equation}
|\theta|^{(0)}+|u_y|^{(0)}+|u|^{(0)}\leq C(T).\label{lemma17theta}
\end{equation}
and
\begin{equation}\label{lemma17}
\int_0^1(\theta_y^2+u_{yy}^2+
u_t^2)dy+\int_0^t\int_0^1(\theta_t^2+|\mathbf{b}_t|^2+u_{yt}^2)dyds\leq
C(T),
\end{equation}
\end{lemma}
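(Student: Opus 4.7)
The plan is to close the three-way bootstrap between the auxiliary quantities $X$, $Y$, $Z$ of \eqref{X}--\eqref{Z}. Lemma \ref{XY} already provides $X+Y\le C(T)(1+Z^{(q+5)/(q+10)})$ with an exponent strictly smaller than $1$; what remains is a complementary estimate bounding $Z$ in terms of $X$ and $Y$. Once that is in hand, Young's inequality closes the loop and yields $X+Y+Z\le C(T)$.

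To bound $\int_0^1 u_t^2\,dy$ together with $\int_0^t\!\!\int_0^1 u_{yt}^2\,dyds$, I would differentiate the momentum equation \eqref{sub22} in $t$, multiply by $u_t$, and integrate by parts in $y$. The boundary term disappears because differentiating the free boundary condition $\lambda u_y/v - p - \tfrac12|\mathbf{b}|^2 = 0$ in time gives zero at $y=0,1$. This produces
\begin{equation*}
\tfrac12\int_0^1 u_t^2\,dy + \lambda\!\int_0^t\!\!\int_0^1 \frac{u_{yt}^2}{v}\,dyds \le C + \int_0^t\!\!\int_0^1 u_{yt}\Big(\tfrac{\lambda u_y^2}{v^2} - p_t - \mathbf{b}\cdot\mathbf{b}_t\Big)\,dyds,
\end{equation*}
and the right-hand side is controlled via Cauchy--Schwarz together with the decomposition $p_t = R\theta_t/v - R\theta u_y/v^2 + \tfrac{4a}{3}\theta^3\theta_t$ in terms of $X$, the estimates of Section 3.1, and a small multiple of the good term that can be absorbed on the left.

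Next, to control $Z=\max\int u_{yy}^2\,dy$, I would invert \eqref{sub22},
\begin{equation*}
\lambda u_{yy}/v = u_t + \lambda u_y v_y/v^2 + p_y + \mathbf{b}\cdot\mathbf{b}_y,
\end{equation*}
and take $L^2$ in $y$. The pressure contribution $\int p_y^2\,dy$ is handled using \eqref{density}, \eqref{temperature} and \eqref{temperature1}; the magnetic term $\int(\mathbf{b}\cdot\mathbf{b}_y)^2\,dy$ by \eqref{freelemma131} and \eqref{lemma93}; the convective term $\int u_y^2 v_y^2\,dy$ by Sobolev embedding together with \eqref{boundvelocity} and \eqref{density}; and $\int u_t^2\,dy$ by the previous step. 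This yields $Z\le C(T)(1 + Y^{\beta} + X^{\gamma})$ with explicit exponents $\beta,\gamma<1$, which combined with Lemma \ref{XY} gives $X+Y+Z\le C(T)$ via Young's inequality.

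The main obstacle is the book-keeping of polynomial powers of $\theta$ emerging from the radiative pressure $a\theta^4/3$ and the conductivity $\kappa\sim\theta^q$: the margin between admissible exponents is narrow, so one has to combine $|\theta|^{(0)}\le C+CY^{1/(2(q+5))}$ and $|u_y|^{(0)}\le C(1+Z^{3/8})$ with the $\int\theta^{q+13}$ bound from \eqref{temperature} carefully, so that each occurrence of $Z$ can eventually be absorbed. Once $X+Y+Z\le C(T)$ is secured, the pointwise estimate \eqref{lemma17theta} for $\theta$ follows from \eqref{temperature1}, for $u_y$ from \eqref{boundvelocity}, and for $u$ from $|u|\le\int_0^1|u_y|\,dy$ together with Lemma \ref{Lemma1}. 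The integral bounds \eqref{lemma17} for $\theta_y^2$, $u_{yy}^2$, $u_t^2$, $\theta_t^2$, $u_{yt}^2$ are then immediate from boundedness of $X,Y,Z$ and the $u_t$-estimate established above, while $\int_0^t\!\!\int_0^1|\mathbf{b}_t|^2\,dyds$ follows by squaring the induction equation \eqref{sub44} and invoking the already-established bounds on $u_y$, $\mathbf{w}_y$, $\mathbf{b}$, $\mathbf{b}_y$, $v_y$ and $\mathbf{b}_{yy}$.
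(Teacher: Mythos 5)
Your proposal follows essentially the same route as the paper: differentiate the momentum equation in $t$, test with $u_t$ to control $\int u_t^2$ and $\iint u_{yt}^2$ via $X$, $Y$, $Z$; invert the momentum equation for $u_{yy}$ to bound $Z$; and close the bootstrap against Lemma \ref{XY}. One small bookkeeping caveat: the claim that $Z\le C(1+Y^\beta+X^\gamma)$ with $\beta,\gamma<1$ is not quite accurate, since the pressure gradient term produces $\int(1+\theta^6)\theta_y^2\,dy\lesssim Y^{(q+8)/(q+5)}$ with exponent exceeding $1$; however, after substituting $Y\le C(1+Z^{(q+5)/(q+10)})$ from Lemma \ref{XY} one still lands on $Z\le C(1+Z^{(q+8)/(q+10)})$ with exponent below $1$, so the bootstrap closes exactly as you intend.
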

\begin{proof}
Differentiate \eqref{sub22} with respect to $t$, multiply it by
$u_t$, and then integrate to obtain
\begin{equation*}
\frac{d}{dt}\int_0^1\frac{u_t^2}{2}dy+\lambda
\int_0^1\frac{u_{yt}^2}{v}dy
=\int_0^1\left(\left(p+\frac12|\mathbf{b}|^2\right)_t+\frac{\lambda u_y^2}{v^2}\right)u_{yt}dy.
\end{equation*}
As a consequence, it  implies that
\begin{equation*}
\begin{split}
&\int_0^1u_t^2dy+\int_0^t\int_0^1u_{yt}^2dyds\\
&\leq
C+C\int_0^t\int_0^1u_y^2|u_{yt}|dyds+C\int_0^t\int_0^1|p_tu_{yt}|dyds
+C\int_0^t\int_0^1|\mathbf{b}\cdot\mathbf{b}_tu_{yt}|dyds\\
&\leq
\frac34\int_0^t\int_0^1u_{yt}^2dyds+C\int_0^t\int_0^1u_y^4dyds\\
&+C\int_0^t\int_0^1p_t^2dyds
+C\int_0^t\int_0^1|\mathbf{b}_t|^2dyds+C(T)\\
&\leq C(T)+\frac34\int_0^t\int_0^1u_{yt}^2dyds
+C\int_0^t\int_0^1p_t^2dyds
+C\int_0^t\int_0^1|\mathbf{b}_t|^2dyds
\end{split}
\end{equation*}
which leads to
\begin{equation}\label{lemma150}
\begin{split}
&\int_0^1u_t^2dy+\int_0^t\int_0^1u_{yt}^2dyds\\
&\leq C(T)+C\int_0^t\int_0^1p_t^2dyds
+C\int_0^t\int_0^1|\mathbf{b}_t|^2dyds.
\end{split}
\end{equation}
We notice that
\begin{equation}
\begin{split}
&\int_0^t\int_0^1p_t^2dyds=\int_0^t\int_0^1\left(p_vv_t+p_{\theta}\theta_t\right)^2dyds\\
&=\int_0^t\int_0^1\left(-\frac{R\theta}{v^2}u_y+\left(R\rho+\frac{4a}{3}\theta^3\right)\theta_t\right)^2dyds\\
&\leq
C\int_0^t\int_0^1\theta^2u_y^2dyds+C\int_0^t\int_0^1(1+\theta^6)\theta_t^2dyds\\
&\leq
C(T)|\theta^2|^{(0)}\int_0^t\int_0^1u_y^2dyds+C\int_0^t\int_0^1(1+\theta^6)\theta_t^2dyds\\
&\leq
C(T)Y^{\frac{1}{q+5}}+X+XY^{\frac{3}{q+5}}\\
&\leq C(T)\left(1+Z^{\frac{q+8}{q+10}}\right).\label{lemma151}
\end{split}
\end{equation}
and follows from \eqref{freelemma131} and \eqref{density}
\begin{equation}
\begin{split}
&\int_0^t\int_0^1|\mathbf{b}_t|^2dyds\\
&\leq\int_0^t\int_0^1\left(|\mathbf{b}|^2u_y^2+|\mathbf{w}_y|^2+|\mathbf{b}_{yy}|^2+|\mathbf{b}_y|^2v_y^2\right)dyds\\
&\leq
C\int_0^t\int_0^1u_y^2dyds+C\int_0^t\int_0^1(|\mathbf{w}_y|^2+|\mathbf{b}_{yy}|^2)dyds\\
&+C\int_0^t\max_{[0,1]}|\mathbf{b}_y|^2\left(\int_0^1v_y^2dy\right)ds\\
&\leq
C+C\int_0^t\int_0^1\left(|\mathbf{b}_y|^2+|\mathbf{b}_{yy}|^2\right)dyds\\
&\leq C(T).\label{lemma153}
\end{split}
\end{equation}
Substituting \eqref{lemma151} and \eqref{lemma153} into \eqref{lemma150}, it shows that
\begin{equation}
\begin{split}
&\int_0^1u_t^2dy+\int_0^t\int_0^1u_{yt}^2dyds\\
&\leq C\left(1+Z^{\frac{q+8}{q+10}}\right).\label{lemma154}
\end{split}
\end{equation}
On the other hand,  it is also deduced from \eqref{sub22} that
\begin{equation*}
u_{yy}=\frac{v}{\lambda}\left(u_t+\left(p+\frac{|\mathbf{b}|^2}{2}\right)_y
+\frac{\lambda v_yu_y}{v^2}\right)
\end{equation*}
and then by integration, it follows from \eqref{lemma154}
\begin{equation*}
\begin{split}
\int_0^1u_{yy}^2dy&\leq
C\int_0^1\left(u_t^2+p_y^2+|\mathbf{b}|^2\cdot|\mathbf{b}_y|^2+v_y^2u_y^2\right)dy+C\\
&\leq C\int_0^1u_t^2dy+C\int_0^1p_y^2dy+C\int_0^1v_y^2u_y^2dy+C\int_0^1|\mathbf{b}_y|^2dy+C\\
&\leq C\left(1+Z^{\frac{q+8}{q+10}}+\int_0^1(1+\theta^6)\theta_y^2dy
+\left(|\theta^2|^{(0)}+|u_y^2|^{(0)}\right)\int_0^1v_y^2dy\right)\\
&\leq
C\left(1+Z^{\frac{q+8}{q+10}}+Y^{\frac{q+8}{q+5}}+Z^{\frac34}\right)\\
&\leq
C(T)\left(1+Z^{\frac{q+8}{q+10}}\right)\\
\end{split}
\end{equation*}

Thus we have $Z\leq C$ due to $0< \frac{q+8}{q+10}<1$, and $X$
and $Y$ are also bounded. Subsequently, $|\theta|^{(0)}$,
$|u_y|^{(0)}$, $\int_0^1(u_t^2+\theta_y^2+u_{yy}^2)dy$ and
$\int_0^t\int_0^1(u_{yt}^2+\theta_t^2)dyds$ are also bounded.
\end{proof}
In the sequel, the lower bound of temperature is obtained.
\begin{lemma} One has
\begin{equation*}
\theta(y,t)\geq C(T).
\end{equation*}
\end{lemma}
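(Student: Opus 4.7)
The strategy is to derive a pointwise differential inequality for the spatial minimum of $\theta$ by applying a minimum principle to the temperature equation. I would start by rewriting \eqref{sub55} using $e = C_v\theta + av\theta^4$ and $p = R\rho\theta + \tfrac{a}{3}\theta^4$ as
\begin{equation*}
(C_v + 4av\theta^3)\theta_t = \left(\frac{\kappa}{v}\theta_y\right)_y + \frac{\lambda u_y^2 + \mu|\mathbf{w}_y|^2 + \nu|\mathbf{b}_y|^2}{v} - R\rho\theta\, u_y - \tfrac{4a}{3}\theta^4 u_y,
\end{equation*}
in which the prefactor satisfies $C_v + 4av\theta^3 \geq C_v > 0$, the diffusion and viscous-dissipation terms carry the favorable sign, and the only potentially negative sources $-R\rho\theta\,u_y$ and $-\tfrac{4a}{3}\theta^4 u_y$ both vanish (at least) linearly as $\theta \to 0$.

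Next I would set $m(t) := \min_{y \in [0,1]} \theta(y,t)$, which is Lipschitz in $t$ thanks to the regularity asserted in Theorem \ref{thm}. At almost every $t$, pick a minimizer $y_0 = y_0(t) \in [0,1]$: whether $y_0$ is interior or a boundary point, the Neumann condition in \eqref{boundary} combined with first-order minimality gives $\theta_y(y_0,t) = 0$, while Taylor expansion yields $\theta_{yy}(y_0,t) \geq 0$. Consequently $\bigl(\tfrac{\kappa}{v}\theta_y\bigr)_y$ reduces to $\tfrac{\kappa}{v}\theta_{yy} \geq 0$ at $(y_0,t)$, and dropping this term together with the non-negative dissipation terms in the rewritten energy equation produces
\begin{equation*}
(C_v + 4av\,m^3)\, m'(t) \;\geq\; -R\rho(y_0,t)\,m(t)\,u_y(y_0,t) - \tfrac{4a}{3}\,m(t)^4\,u_y(y_0,t) \;\geq\; -C(T)\,m(t),
\end{equation*}
where the last step invokes the pointwise bounds $|\theta|^{(0)} + |u_y|^{(0)} \leq C(T)$ from \eqref{lemma17theta} and $\rho \leq C$ from \eqref{rho}.

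Since $C_v + 4av\,m^3$ is bounded above (because $v$ and $\theta$ are bounded above), Gronwall together with the initial bound $m(0) \geq C_0^{-1}$ gives $\theta(y,t) \geq m(t) \geq C_0^{-1}\exp(-C(T)t) \geq C(T) > 0$, which is the desired lower bound. The main technical subtlety is justifying the ODE for $m(t)$: one must verify $m'(t) = \theta_t(y_0(t),t)$ a.e., deal with multiple minimizers, and confirm $\theta_{yy}(y_0,t) \geq 0$ when $y_0 \in \{0,1\}$. All of these are routine given the $C^{2+\alpha,1+\alpha/2}$ regularity of $\theta$. Should one wish to avoid such envelope-type considerations, an equivalent route is to apply the classical parabolic maximum principle to $\phi := 1/\theta$, which from the calculation above satisfies a uniformly parabolic linear inequality $A\phi_t - B\phi_{yy} - D\phi_y \leq E\phi + F$ with $L^\infty$ coefficients and homogeneous Neumann data inherited from \eqref{boundary}, yielding the same conclusion $\phi \leq C(T)$ and hence $\theta \geq 1/C(T)$.
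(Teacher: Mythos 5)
Your proof is correct, and your first route (the envelope argument on $m(t)=\min_y\theta(y,t)$) is genuinely different in packaging from the paper's, although both are minimum-principle-type arguments built on the same observation: the Neumann condition forces $\theta_y=0$ and $\theta_{yy}\geq 0$ at a spatial minimizer, which kills the diffusion term, and the only sign-indefinite source $-\theta p_\theta u_y$ is $O(\theta)$ thanks to the already-established bounds $|\theta|^{(0)},|u_y|^{(0)},\rho\leq C(T)$. Your second, parenthetical route --- work with $\phi=1/\theta$ and a parabolic comparison principle --- is in fact exactly what the paper does: it sets $\Theta=1/\theta$, completes the square in $u_y$ against the viscous term $\lambda u_y^2/v$ to produce the bounded residue $vp_\theta^2/(4\lambda)$, obtains $\Theta_t\leq e_\theta^{-1}(\kappa\Theta_y/v)_y+C(T)$, and compares $\Theta$ against the linear supersolution $C(T)t+\max 1/\theta_0$. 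Two small remarks on the comparison. First, the paper's completion of the square is not merely cosmetic: it upgrades your bad term $-\theta p_\theta u_y\geq -C(T)\theta$ (proportional to $\theta$, hence Gronwall and an exponential bound $\theta\geq C_0^{-1}e^{-Ct}$) to the $\theta$-independent constant $-vp_\theta^2/(4\lambda)\geq -C(T)$, which yields the sharper algebraic decay $\theta\geq(Ct+\max 1/\theta_0)^{-1}$; both suffice for the lemma, but the paper's is the better quantitative statement. Second, if you do pursue the $1/\theta$ route, keep the diffusion term in divergence form $(\kappa\Theta_y/v)_y$ rather than expanding it as $B\phi_{yy}+D\phi_y$: the first-order coefficient $D=(\kappa/v)_y$ involves $\theta_y$ and $v_y$, whose $L^\infty$ bounds are only established in the lemmas following this one, whereas the divergence-form weak maximum principle needs only $\kappa/(ve_\theta)$ bounded above and below, which is available here. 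Your envelope argument sidesteps this issue automatically since $\theta_y(y_0,t)=0$ annihilates that first-order coefficient pointwise.
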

\begin{proof}
Let~$\Theta=\frac{1}{\theta}$, then \eqref{lemma81} is written as
\begin{equation*}
\begin{split}
e_{\theta}\Theta_t&=\left(\frac{\kappa}{v}\Theta_y\right)_y
-\left(\frac{2\kappa\Theta_y^2}{v\Theta}+\frac{\mu|\mathbf{w}_y|^2\Theta^2}{v}+\frac{\nu|\mathbf{b}_y|^2\Theta^2}{v}\right)\\
&~~~~-\frac{\lambda
\Theta^2}{v}\left(u_y-\frac{vp_{\theta}}{2\lambda
\Theta}\right)^2+\frac{vp_{\theta}^2}{4\lambda},
\end{split}
\end{equation*}
which implies
\begin{equation*}
\Theta_t\leq
\frac{1}{e_{\theta}}\left(\frac{\kappa}{v}\Theta_y\right)_y+C(T),
\end{equation*}
by \eqref{rho} and \eqref{lemma17theta} for some positive constant. Define the
operator $\mathscr{L}:=-\frac{\partial}{\partial
t}+\frac{1}{e_{\theta}}\frac{\partial}{\partial y}\left(\frac{\kappa}{v}\frac{\partial}{\partial y}\right)$ and
then
\begin{equation*}
\left\{
\begin{array}{lllllllll}
 \mathscr{L}\widetilde{\Theta}<0,\qquad &\textup{on}\,\,
Q_T=(0,1)\times(0,T),\\[2mm]
\widetilde{\Theta}|_{t=0}\geq 0\qquad &\textup{on}
\,\,[0,1],\\[2mm]
\widetilde{\Theta}_y|_{y=0,1}=0 \quad &\textup{on} \,\, [0,T],
\end{array}
\right.
\end{equation*}
where
$\widetilde{\Theta}(y,t)=C(T)t+\max_{[0,1]}\frac{1}{\theta_0(y)}-\Theta(y,t)$,
and by the comparison theorem, one has
\begin{equation*}
\min_{(y,t)\in \overline{Q_T}}\widetilde{\Theta}(y,t)\geq 0,
\end{equation*}
which is inferred
\begin{equation*}
\theta(y,t)\geq
\left(Ct+\max_{[0,1]}\frac{1}{\theta_0(y)}\right)^{-1},
\end{equation*}
for any $(y,t)\in \overline{Q_T}$.
\end{proof}


\subsection{Higher order derivatives  a priori estimates of $(\mathbf{w},\mathbf{b},\theta)$}
\begin{lemma} One has
\begin{equation}\label{lemma17b}
\begin{split}
&|(\mathbf{w}_y,\mathbf{b}_y,v_y)|^{(0)}
+\int_0^1(|\mathbf{w}_t|^2+|(v\mathbf{b})_t|^2+|\mathbf{w}_{yy}|^2+|\mathbf{b}_{yy}|^2)dy\\
&+\int_0^t\int_0^1\left(|\mathbf{w}_{ty}|^2+|\mathbf{b}_{ty}|^2+\theta_{yy}^2\right)dyds\leq
C(T).
\end{split}
\end{equation}
\end{lemma}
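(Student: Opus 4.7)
The plan is to combine parabolic-type energy estimates obtained by time-differentiating \eqref{sub33} and \eqref{sub44} with an algebraic recovery of the spatial second derivatives from those same equations, to promote $L^2_y$ control to $L^\infty$ via Sobolev embedding, and to close a Gr\"onwall-type loop for $|v_y|^{(0)}$. All prerequisites are already in hand: $|u|^{(0)},|u_y|^{(0)},|\theta|^{(0)}\le C(T)$ from \eqref{lemma17theta}; $\int(u_t^2+\theta_y^2+u_{yy}^2)\,dy+\int_0^t\int(\theta_t^2+|\mathbf{b}_t|^2+u_{yt}^2)\,dyds\le C(T)$ from \eqref{lemma17}; $|\mathbf{b}|^{(0)}$ and $\int(|\mathbf{w}_y|^2+|\mathbf{b}_y|^2)\,dy\le C(T)$ from \eqref{freelemma131}; $\int v_y^2\,dy\le C(T)$ from \eqref{density}; and uniform upper/lower bounds on $v$ and $\theta$.

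First I would differentiate \eqref{sub33} in $t$, multiply by $\mathbf{w}_t$, and integrate over $[0,1]\times[0,t]$; the boundary contributions vanish because $\mathbf{w}|_{y=0,1}=0$ forces $\mathbf{w}_t|_{y=0,1}=0$. Using $v_t=u_y$, the principal dissipation $\mu\int_0^t\int|\mathbf{w}_{yt}|^2/v\,dyds$ comes out on the left, while the cross term with $\mathbf{b}_t$ is absorbed by Cauchy--Schwarz against the available $\int_0^t\int|\mathbf{b}_t|^2\,dyds\le C(T)$, and the commutator $\mathbf{w}_y u_y/v^2$ by $|u_y|^{(0)}\le C$ together with $\int|\mathbf{w}_y|^2\,dy\le C$. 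This yields $\int|\mathbf{w}_t|^2\,dy+\int_0^t\int|\mathbf{w}_{yt}|^2\,dyds\le C(T)$. The same manipulation on the time-differentiated \eqref{sub44}, tested against $(v\mathbf{b})_t$ and using $|\mathbf{b}|^{(0)}\le C$ and the just-obtained $\mathbf{w}_t$ bound, produces the matching estimate for $(v\mathbf{b})_t$ and $\mathbf{b}_{yt}$.

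Next I would solve \eqref{sub33} and \eqref{sub44} algebraically for the spatial second derivatives,
\[
\mu\frac{\mathbf{w}_{yy}}{v}=\mathbf{w}_t-\mathbf{b}_y+\mu\frac{\mathbf{w}_y v_y}{v^2},\qquad \nu\frac{\mathbf{b}_{yy}}{v}=(v\mathbf{b})_t+u_y\mathbf{b}-\mathbf{w}_y+\nu\frac{\mathbf{b}_y v_y}{v^2},
\]
square, and integrate in $y$. Only the products $\int|\mathbf{w}_y v_y|^2\,dy$ and $\int|\mathbf{b}_y v_y|^2\,dy$ need care; these are controlled via the 1D interpolation $\|\mathbf{w}_y\|_{L^\infty}^2\le C\|\mathbf{w}_y\|_{L^2}\|\mathbf{w}_{yy}\|_{L^2}+C\|\mathbf{w}_y\|_{L^2}^2$ combined with $\int v_y^2\,dy\le C$, after which Young's inequality absorbs the resulting $\|\mathbf{w}_{yy}\|_{L^2}$ back into the left. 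Hence $\int(|\mathbf{w}_{yy}|^2+|\mathbf{b}_{yy}|^2)\,dy\le C(T)$, and the embedding $H^1(0,1)\hookrightarrow L^\infty(0,1)$ gives $|\mathbf{w}_y|^{(0)}+|\mathbf{b}_y|^{(0)}\le C(T)$. The $\int_0^t\int\theta_{yy}^2\,dyds$ bound follows by writing $e_t=(C_v+4a\theta^3/\rho)\theta_t+a\theta^4 u_y$ in \eqref{sub55}, solving for $\kappa\theta_{yy}/v$, squaring and integrating in $(y,s)$, and controlling every source by the $L^\infty$ bounds on $\theta,u_y,\mathbf{w}_y,\mathbf{b}_y$ together with $\int_0^t\int\theta_t^2\,dyds\le C(T)$.

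For $|v_y|^{(0)}$ I would differentiate the identity \eqref{lemma2V} in $y$, obtaining $\lambda v_y/v=u-u_0+\lambda v_{0y}/v_0+\int_0^t(p_y+\mathbf{b}\cdot\mathbf{b}_y)\,ds$, and expand $p_y=R\theta_y/v-R\theta v_y/v^2+\tfrac{4a}{3}\theta^3\theta_y$. The $\mathbf{b}\cdot\mathbf{b}_y$ term is pointwise bounded by $|\mathbf{b}|^{(0)}|\mathbf{b}_y|^{(0)}$, the $\theta_y$ term by $\|\theta_y\|_{L^\infty}\le C(\|\theta_y\|_{L^2}^{1/2}\|\theta_{yy}\|_{L^2}^{1/2}+\|\theta_y\|_{L^2})$ combined with Cauchy--Schwarz in time against the just-derived $L^2_{t,y}$ bound on $\theta_{yy}$, while the $v_y$-linear piece yields a Gr\"onwall loop in $t$ with bounded coefficient. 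The main obstacle is the tightly coupled sequencing: $\int\mathbf{w}_{yy}^2$ involves $v_y$, $|v_y|^{(0)}$ involves $\theta_{yy}$, and $\theta_{yy}$ involves $|\mathbf{w}_y|^{(0)},|\mathbf{b}_y|^{(0)}$; Gagliardo--Nirenberg together with Young's inequality and Gr\"onwall breaks the apparent circularity, fixing the order $(\mathbf{w}_{yy},\mathbf{b}_{yy})\to(|\mathbf{w}_y|^{(0)},|\mathbf{b}_y|^{(0)})\to\theta_{yy}\to|v_y|^{(0)}$.
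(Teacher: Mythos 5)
Your proposal employs the same toolbox as the paper — time-differentiate \eqref{sub33} and \eqref{sub44}, test against $\mathbf{w}_t$ and $(v\mathbf{b})_t$, recover $\mathbf{w}_{yy}$ and $\mathbf{b}_{yy}$ algebraically, use Gagliardo--Nirenberg to promote $L^2_y$ to $L^\infty_y$, and close a Gr\"onwall loop for $v_y$ through the differentiated identity \eqref{lemma2V} — but it reorders the sub-estimates, and that reordering opens a gap in the $(v\mathbf{b})_t/\mathbf{b}_{ty}$ step that you do not address. The paper's order is: $\mathbf{w}_t,\mathbf{w}_{ty}$, then $\mathbf{w}_{yy}$, then $\int_0^t\int\theta_{yy}^2$ (using the already available $\int_0^t\int(|\mathbf{w}_y|^4+|\mathbf{b}_y|^4)\,dyds\le C(T)$ from \eqref{freelemma131}, \emph{not} pointwise bounds on $\mathbf{w}_y,\mathbf{b}_y$), then $|v_y|^{(0)}$ by Gr\"onwall, and \emph{only then} $(v\mathbf{b})_t,\mathbf{b}_{ty}$, followed by $\mathbf{b}_{yy}$ and $|\mathbf{b}_y|^{(0)}$. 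The reason for putting $|v_y|^{(0)}$ first is concrete: when you test the time-differentiated \eqref{sub44} against $(v\mathbf{b})_t$ and integrate by parts, $\partial_y(v\mathbf{b})_t=u_{yy}\mathbf{b}+u_y\mathbf{b}_y+v_y\mathbf{b}_t+v\mathbf{b}_{ty}$, so cross terms of the form $\int\frac{\nu}{v}\mathbf{b}_{yt}\cdot v_y\mathbf{b}_t\,dy$, $\int\mathbf{w}_t\cdot v_y\mathbf{b}_t\,dy$ and $\int\frac{\nu u_y\mathbf{b}_y}{v^2}\cdot v_y\mathbf{b}_t\,dy$ appear, and the paper disposes of them at a stroke via $|v_y|^{(0)}\le C(T)$ and $\int_0^t\int|\mathbf{b}_t|^2\,dyds\le C(T)$ from \eqref{lemma17}.

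You instead perform the $(v\mathbf{b})_t$ estimate immediately after the $\mathbf{w}_t$ estimate and claim that $|\mathbf{b}|^{(0)}$ together with the $\mathbf{w}_t$ bound suffices. At that stage you only have $\|v_y\|_{L^2}\le C$ (from \eqref{density}), not $|v_y|^{(0)}$, so those $v_y\mathbf{b}_t$ terms are not controlled by what you list. They \emph{can} be closed by an extra interpolation step: bound $\max_{[0,1]}|\mathbf{b}_t|$ by $C\|\mathbf{b}_t\|_{L^2}^{1/2}\|\mathbf{b}_{ty}\|_{L^2}^{1/2}+C\|\mathbf{b}_t\|_{L^2}$, pull out $\|v_y\|_{L^2}$, and then Young's inequality sends a small multiple of $\int_0^t\|\mathbf{b}_{ty}\|_{L^2}^2\,ds$ to the left-hand side while the remainder is dominated by $\int_0^t\int|\mathbf{b}_t|^2\,dyds\le C(T)$. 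With that supplement your ordering is self-consistent — $\mathbf{b}_{yy}$ then also closes by interpolation, and $\theta_{yy}$ can indeed be handled either with your $L^\infty$ bounds on $\mathbf{w}_y,\mathbf{b}_y$ or with the paper's $L^4_{y,s}$ bounds — but as written the $(v\mathbf{b})_t$ step is incomplete. One further small slip: your algebraic formula $\nu\mathbf{b}_{yy}/v=(v\mathbf{b})_t+u_y\mathbf{b}-\mathbf{w}_y+\nu\mathbf{b}_yv_y/v^2$ should either drop the $u_y\mathbf{b}$ term (if using $(v\mathbf{b})_t$) or replace $(v\mathbf{b})_t$ by $v\mathbf{b}_t$; as these two quantities differ by $u_y\mathbf{b}$, which is bounded, this is only a bookkeeping error.
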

\begin{proof}
Firstly, we differentiate \eqref{sub33} with respect to $t$,
multiply it with $\mathbf{w}_t$ and then integrate over
$(0,1)\times (0,t)$
\begin{equation*}
\begin{split}
&\frac12\int_0^1|\mathbf{w}_t|^2dy+\int_0^t\int_0^1\frac{\mu}{v}|\mathbf{w}_{ty}|^2dyds\\
&=\frac12\int_0^1|\mathbf{w}_t|^2(y,0)dy
+\int_0^t\int_0^1\frac{\mu}{v^2}u_y\mathbf{w}_y\cdot\mathbf{w}_{ty}dyds
-\int_0^t\int_0^1\mathbf{b}_t\cdot\mathbf{w}_{ty}dyds\\
&\leq
C+\frac12\int_0^t\int_0^1\frac{\mu}{v}|\mathbf{w}_{ty}|^2dyds+C(T)
\int_0^t\int_0^1(u_y^4+|\mathbf{w}_y|^4+|\mathbf{b}_t|^2)dyds,
\end{split}
\end{equation*}
which follows from \eqref{freelemma131}, \eqref{lemma17theta} and \eqref{lemma17}
\begin{equation}\label{lemma17w}
\int_0^1|\mathbf{w}_t|^2dy+\int_0^t\int_0^1|\mathbf{w}_{ty}|^2dyds\leq
C(T).
\end{equation}
Similarly, we get from \eqref{sub33}
\begin{equation*}
\mathbf{w}_{yy}=\frac{v}{\mu}\left(\mathbf{w}_t-\mathbf{b}_y
+\frac{\mu}{v^2}v_y\mathbf{w}_y\right),
\end{equation*}
which leads to
\begin{equation*}
\begin{split}
\int_0^1|\mathbf{w}_{yy}|^2dy&\leq
C\int_0^1(|\mathbf{w}_t|^2+|\mathbf{b}_y|^2+v_y^2|\mathbf{w}_y|^2)dy\\
&\leq C+C\max_{[0,1]}|\mathbf{w}_y|^2\int_0^1v_y^2dy\\
&\leq
C(T)+C\int_0^1|\mathbf{w}_y|^2dy+\frac12\int_0^1|\mathbf{w}_{yy}|^2dy\\
&\leq
C(T)+\frac12\int_0^1|\mathbf{w}_{yy}|^2dy.
\end{split}
\end{equation*}
i.e.,
\begin{equation*}
\int_0^1|\mathbf{w}_{yy}|^2dy\leq
C(T).
\end{equation*}
Secondly, we further deduce that
\begin{equation}
\begin{split}
&\int_0^t\int_0^1\frac{\kappa^2}{v^2}\theta_{yy}^2dyds\\
 &\leq
C\int_0^t\int_0^1(\theta_t^2+u_y^2+u_y^4+|\mathbf{w}_y|^4+|\mathbf{b}_y|^4+v_y^2\theta_y^2+\theta_y^4)dyds\\
&\leq C+C\int_0^t\max_{[0,1]}\theta_y^2\int_0^1(v_y^2+\theta_y^2)dyds\\
&\leq
C+C\int_0^t\int_0^1\theta_y^2dyds+\frac12\int_0^t\int_0^1\frac{\kappa^2}{v^2}\theta_{yy}^2dyds,
\notag
\end{split}
\end{equation}
and then
\begin{equation}
\int_0^t\int_0^1\theta_{yy}^2dyds\leq
C(T)+C\int_0^t\int_0^1\theta_y^2dyds\leq  C(T).
\end{equation}
In addition, by \eqref{lemma2V}, one has
\begin{equation*}
\begin{split}
\lambda (\ln v)_y=&\lambda (\ln v_0(y))_y+\int_0^t(p_y+\mathbf{b}\cdot\mathbf{b}_y)ds+(u-u_0)
\end{split}
\end{equation*}
or equivalently
\begin{equation*}
\begin{split}
v_y^2
&\leq
C+C\int_0^t(|\mathbf{b}_y|^2+p_v^2v_y^2+p_{\theta}^2\theta_y^2)ds\\
&\leq
C+C\int_0^t\int_0^1(|\mathbf{b}_y|^2+|\mathbf{b}_{yy}|^2+\theta_y^2
+\theta_{yy}^2)dyds+C\int_0^tv_y^2ds\\
&\leq C+C\int_0^tv_y^2ds.
\end{split}
\end{equation*}
Thus, we deduce that $v_y$ is bounded by Gr\"{o}nwall inequality.
Lastly, differentiate \eqref{sub44} with respect to $t$ and then
multiply by $(v\textbf{b})_t$ and integrate to get
\begin{equation}
\begin{split}
&\frac12\frac{d}{dt}\int_0^1(v\mathbf{b})_t^2dy+\int_0^1\nu|\mathbf{b}_{yt}|^2dy\\
= &
-\int_0^1\frac{\nu}{v}\mathbf{b}_{yt}\cdot(\mathbf{b}u_{yy}+\mathbf{b}_yu_y+\mathbf{b}_tv_y)dy\\
&+\int_0^1\frac{\nu}{v^2}u_y\mathbf{b}_y\cdot(\mathbf{b}u_{yy}+\mathbf{b}_yu_y
+\mathbf{b}_tv_y+\mathbf{b}_{ty}v)dy\\
&-\int_0^1\mathbf{w}_t\cdot(\mathbf{b}u_{yy}+\mathbf{b}_yu_y+\mathbf{b}_tv_y+\mathbf{b}_{ty}v)dy\\
\le &\varepsilon
\int_0^1\nu|\mathbf{b}_{yt}|^2dy+C\int_0^1(\mathbf{b}_y^2+\mathbf{b}_t^2+\mathbf{w}_t^2)dy+C\int_0^1(u_{yy}^2+|\mathbf{w}_{yy}|^2)dy.
\end{split}
\end{equation}
So we get from \eqref{freelemma131}, \eqref{lemma17} and \eqref{lemma17w}
\begin{equation}
\int_0^1(v\mathbf{b})_t^2dy+\int_0^t\int_0^1|\mathbf{b}_{yt}|^2dyds\leq
C,
\end{equation}
by integration for any fixed sufficiently small $\varepsilon$.
Furthermore, by \eqref{sub44}, one has
\begin{equation*}
\begin{split}
\int_0^1|\mathbf{b}_{yy}|^2dy&\leq
C\int_0^1((v\mathbf{b})_t^2+|\mathbf{w}_y|^2+v_y^2|\mathbf{b}_y|^2)dy\\
&\leq C(T)+C\int_0^1|\mathbf{b}_y|^2dy\leq C(T),
\end{split}
\end{equation*}
and similarly
\begin{equation*}
|\mathbf{b}_y|^2\leq
C\int_0^1|\mathbf{b}_y|^2dy+C\int_0^1|\mathbf{b}_{yy}|^2dy\leq C(T).
\end{equation*}
This completes the proof.
\end{proof}

\begin{lemma}\label{Lemma18}We have
\begin{equation}
|\theta_y|^{(0)}+\int_0^1(\theta_t^2+\theta_{yy}^2)dy+\int_0^t\int_0^1\theta_{yt}^2dyds\leq
C(T).\label{lemma181}
\end{equation}
\end{lemma}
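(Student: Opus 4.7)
The plan is to differentiate the temperature equation \eqref{sub55} with respect to $t$, multiply by $\theta_t$, and integrate over $[0,1]\times[0,t]$, in order to control $\int_0^1\theta_t^2\,dy$ pointwise in $t$ together with $\int_0^t\int_0^1\theta_{yt}^2\,dy\,ds$. Starting from the equivalent form
\begin{equation*}
e_{\theta}\theta_t+\theta p_{\theta}u_y=\left(\frac{\kappa}{v}\theta_y\right)_y+\frac{\lambda u_y^2}{v}+\frac{\mu|\mathbf{w}_y|^2}{v}+\frac{\nu|\mathbf{b}_y|^2}{v},
\end{equation*}
differentiating in $t$, multiplying by $\theta_t$ and integrating in $y$ yields, after collecting the time-derivative terms and integrating the parabolic part by parts,
\begin{equation*}
\frac{1}{2}\frac{d}{dt}\int_0^1 e_{\theta}\theta_t^2\,dy+\int_0^1\frac{\kappa}{v}\theta_{yt}^2\,dy=-\frac{1}{2}\int_0^1(e_{\theta})_t\theta_t^2\,dy-\int_0^1\left(\frac{\kappa}{v}\right)_t\theta_y\theta_{yt}\,dy-\int_0^1(\theta p_{\theta}u_y)_t\theta_t\,dy+J,
\end{equation*}
where $J$ collects the time derivatives of the three dissipation terms paired with $\theta_t$. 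The boundary contributions from integration by parts vanish thanks to $\theta_y|_{y=0,1}=0$, which upon time-differentiation also gives $\theta_{yt}|_{y=0,1}=0$.

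\textbf{Estimating the right-hand side.} Each term will be bounded using the pointwise bounds $|\theta|^{(0)},|u_y|^{(0)},|v_y|^{(0)},|\mathbf{w}_y|^{(0)},|\mathbf{b}_y|^{(0)}\leq C(T)$ provided by \eqref{lemma17theta} and \eqref{lemma17b}, together with $\int_0^1\theta_y^2\,dy\leq C(T)$ from \eqref{lemma17} and the space-time bounds $\int_0^t\int_0^1(u_{yt}^2+|\mathbf{w}_{yt}|^2+|\mathbf{b}_{yt}|^2)\,dy\,ds\leq C(T)$. For instance, $(\theta p_{\theta}u_y)_t$ expands into $\theta_t u_y$, $u_y^2$, and $\theta u_{yt}$ contributions which, paired with $\theta_t$ and handled by Cauchy--Schwarz, are controllable after absorbing an $\varepsilon\int_0^t\|\theta_{yt}\|_{L^2}^2\,ds$ portion into the left-hand side. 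The dissipation derivative $J$ involves $u_y u_{yt}$, $\mathbf{w}_y\cdot\mathbf{w}_{yt}$ and $\mathbf{b}_y\cdot\mathbf{b}_{yt}$, all manageable by the same strategy. The term with $(\kappa/v)_t=\kappa_{\theta}\theta_t/v-\kappa u_y/v^2$ is split similarly.

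\textbf{The main obstacle: the radiation nonlinearity.} The key difficulty is $\int_0^1(e_{\theta})_t\theta_t^2\,dy$, since $(e_{\theta})_t=12av\theta^2\theta_t+4au_y\theta^3$ produces a cubic contribution $\int_0^1 v\theta^2\theta_t^3\,dy$ in which $\theta_t$ is not pointwise bounded. To handle it, I apply the 1D Gagliardo--Nirenberg inequality $\|\theta_t\|_{L^4}^2\leq C\|\theta_t\|_{L^2}^{3/2}\|\theta_{yt}\|_{L^2}^{1/2}+C\|\theta_t\|_{L^2}^2$ together with Cauchy--Schwarz and Young to obtain
\begin{equation*}
\left|\int_0^1 v\theta^2\theta_t^3\,dy\right|\leq \varepsilon\|\theta_{yt}\|_{L^2}^2+C_{\varepsilon}F^{5/3}+C_{\varepsilon}F^{3/2},\qquad F(t):=\|\theta_t(\cdot,t)\|_{L^2}^2.
\end{equation*}
The first piece is absorbed into the left-hand side. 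The superlinear powers are not amenable to pure Gr\"onwall, but I close the estimate by exploiting the prior integral bound $\int_0^T F\,ds\leq C(T)$ from \eqref{lemma17}: writing $\int_0^t F^{\alpha}\,ds\leq M(t)^{\alpha-1}\int_0^t F\,ds$ for $\alpha\in\{3/2,5/3\}$ with $M(t):=\sup_{s\leq t}F(s)$, the integrated differential inequality becomes $M(t)\leq C(T)\bigl(1+M(t)^{2/3}+M(t)^{1/2}\bigr)$, forcing $M(t)\leq C(T)$ by bootstrap.

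\textbf{Deducing $\int_0^1\theta_{yy}^2\,dy$ and $|\theta_y|^{(0)}$.} Once $\int_0^1\theta_t^2\,dy\leq C(T)$ and $\int_0^t\int_0^1\theta_{yt}^2\,dy\,ds\leq C(T)$ are established, I solve the energy equation algebraically for $\theta_{yy}$:
\begin{equation*}
\frac{\kappa}{v}\theta_{yy}=e_{\theta}\theta_t+\theta p_{\theta}u_y-\frac{\lambda u_y^2}{v}-\frac{\mu|\mathbf{w}_y|^2}{v}-\frac{\nu|\mathbf{b}_y|^2}{v}-\left(\frac{\kappa}{v}\right)_y\theta_y.
\end{equation*}
Squaring and integrating in $y$, every term on the right is controlled by the pointwise bounds together with $\int_0^1\theta_t^2\,dy$ and $\int_0^1\theta_y^2\,dy$, producing $\int_0^1\theta_{yy}^2\,dy\leq C(T)$. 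Finally, the 1D Sobolev embedding $H^1([0,1])\hookrightarrow L^{\infty}([0,1])$ delivers $|\theta_y|^{(0)}\leq C(\|\theta_y\|_{L^2}+\|\theta_{yy}\|_{L^2})\leq C(T)$, completing the proof.
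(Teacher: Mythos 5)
Your proposal is correct, but it takes a genuinely different route from the paper's, and the difference is instructive. You multiply the time-differentiated energy equation by $\theta_t$, which produces the awkward cubic term $\int_0^1 v\theta^2\theta_t^3\,dy$ from $(e_\theta)_t\theta_t^2$, and you then handle it with Gagliardo--Nirenberg plus a nonlinear bootstrap on $M(t)=\sup_{s\le t}\|\theta_t\|_{L^2}^2$, exploiting $\int_0^T\|\theta_t\|_{L^2}^2\,ds\le C(T)$ from \eqref{lemma17} to get $M(t)\le C(T)\bigl(1+M(t)^{2/3}+M(t)^{1/2}\bigr)$. That argument closes because the powers are strictly sublinear, and I checked your exponents ($F^{5/4}\|\theta_{yt}\|_{L^2}^{1/2}\le\varepsilon\|\theta_{yt}\|_{L^2}^2+C_\varepsilon F^{5/3}$ by Young with $p=4$, $q=4/3$) — they are right. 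The paper instead multiplies by $e_\theta\theta_t$, which algebraically cancels the $(e_\theta)_t\theta_t^2$ term into a perfect derivative, since $(e_\theta\theta_t)(e_\theta\theta_t)_t=\tfrac12\partial_t(e_\theta\theta_t)^2$, and the dissipation then appears with the extra weight $e_\theta$, i.e.\ $\int_0^1\tfrac{\kappa}{v}e_\theta\theta_{yt}^2\,dy$. With this multiplier the cubic obstruction never arises, all the remaining right-hand-side terms are at most quadratic in $\theta_t$, and the estimate closes by linear absorption and the already-known bounds \eqref{lemma17theta}, \eqref{lemma17}, \eqref{lemma17b}, specifically via $\int_0^t\int_0^1(v_y^2+\theta_y^2)\theta_t^2\,dy\,ds\le C\int_0^t\max_{[0,1]}\theta_t^2\,ds\le\varepsilon\int_0^t\int_0^1\theta_{yt}^2+C_\varepsilon\int_0^t\int_0^1\theta_t^2$. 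So the paper's choice of test function buys a purely linear Gr\"onwall-type closure, while your plain $\theta_t$ multiplier is more generic but forces the interpolation and sublinear-bootstrap machinery; both are valid, and your step from $\int_0^1\theta_t^2\,dy$ to $\int_0^1\theta_{yy}^2\,dy$ and then $|\theta_y|^{(0)}$ via Sobolev embedding matches the paper. One small slip worth noting: $(\kappa/v)_t=\dfrac{\kappa_v u_y+\kappa_\theta\theta_t}{v}-\dfrac{\kappa u_y}{v^2}$, so you dropped the $\kappa_v u_y/v$ piece; it is harmless because $u_y$ and $\theta$ are already pointwise bounded, but it should be there.
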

\begin{proof}
Differentiate \eqref{sub55} with respect to $t$, multiply it by
$e_{\theta}\theta_t$ and then integrate it over $[0,1]$
\begin{equation*}
\begin{split}
\frac{d}{dt}&\int_0^1\frac{(e_{\theta}\theta_t)^2}{2}dy+\int_0^1\frac{\kappa}{v}e_{\theta}\theta_{yt}^2dy\\
=&\int_0^1\bigg[-p_{\theta}e_{\theta}u_y\theta_t^2-\theta p_{\theta
v}e_{\theta}u_y^2\theta_t-\theta
p_{\theta\theta}e_{\theta}u_y\theta_t^2-\theta
p_{\theta}e_{\theta}u_{yt}\theta_t\\
&+\frac{2\lambda}{v}e_{\theta}u_yu_{yt}\theta_t-\frac{\lambda}{v^2}e_{\theta}u_y^3\theta_t
+\frac{2\mu}{v}e_{\theta}\mathbf{w}_y\cdot\mathbf{w}_{yt}\theta_t-\frac{\mu}{v^2}e_{\theta}u_y|\mathbf{w}_y|^2\theta_t\\
&+\frac{2\nu}{v}e_{\theta}\theta_t\mathbf{b}_y\cdot\mathbf{b}_{yt}-\frac{\nu}{v^2}e_{\theta}|\mathbf{b}_y|^2\theta_t
-\left(\frac{\kappa}{v}\right)_ve_{\theta v}v_yu_y\theta_y
\theta_t-\left(\frac{\kappa}{v}\right)_ve_{\theta\theta}u_y\theta_y^2\theta_t\\
&-\left(\frac{\kappa}{v}\right)_ve_{\theta}u_y\theta_y\theta_{yt}-\left(\frac{\kappa}{v}\right)_{\theta}e_{\theta
v}v_y\theta_y\theta_t^2
-\left(\frac{\kappa}{v}\right)_{\theta}e_{\theta\theta}\theta_y^2\theta_t^2\\
&-\left(\frac{\kappa}{v}\right)_{\theta}e_{\theta}\theta_t\theta_y\theta_{yt}
-\frac{\kappa}{v}e_{\theta v}v_y
\theta_t\theta_{yt}-\frac{\kappa}{v}e_{\theta\theta}\theta_y\theta_t\theta_{yt}\bigg]dy,
\end{split}
\end{equation*}
which implies by integration
\begin{equation*}
\begin{split}
&\int_0^1\theta_t^2dy+\int_0^t\int_0^1\theta_{yt}^2dyds\\
&\leq
C\left(1+\int_0^t\int_0^1(v_y^2+\theta_y^2)\theta_t^2dyds\right)\\
&\leq C\left(1+\int_0^t\max_{[0,1]}\theta_t^2ds\right)\\
&\leq C\left(1+\varepsilon
\int_0^t\int_0^1\theta_{yt}^2dyds+C_{\varepsilon}\int_0^t\int_0^1\theta_t^2dyds\right),
\end{split}
\end{equation*}
and then $\int_0^1\theta_t^2dy$ and
$\int_0^t\int_0^1\theta_{yt}^2dyds$ are bounded. Taking the same
operation as $\mathbf{b}_{yy}$ and $\mathbf{w}_{yy}$, we deduce from
\eqref{sub55}
\begin{equation*}
\begin{split}
\int_0^1\theta_{yy}^2dy&\leq
\int_0^1\frac{1}{\kappa^2}\left(1+\theta_t^2+u_y^2+u_y^2+v_y^2\theta_y^2+\theta_y^4\right)dy\\
&\leq
C\left(1+\max_{[0,1]}\theta_y^2\cdot\int_0^1(v_y^2+\theta_y^2)dy\right)\\
&\leq \varepsilon \int_0^1\theta_{yy}^2dy+C(T),
\end{split}
\end{equation*}
which implies the boundedness of $\int_0^1\theta_{yy}^2dy$, and so
is $|\theta_y|^{(0)}$ by employing the embedding theorem.
\end{proof}


Now  we have established Lemmas \ref{Lemma1}--\ref{Lemma18}, we
can prove that the H\"{o}lder estimates of solutions by the routine manner. Indeed, from
\eqref{lemma17theta}, \eqref{lemma17b} and \eqref{lemma181}, one has
\begin{equation*}
(u,\mathbf{w},\mathbf{b},\theta)\in C^{1,0}(Q_T)^6.
\end{equation*}
Following the procedure of \cite{DafermosHisao}, \cite{umeharaTani}
or \cite{ZhangXie}, we can get
\begin{equation*}
\begin{split}
&(u,\mathbf{w},\mathbf{b},\theta)\in C^{1,\frac12}(Q_T)^6, \\[3mm]
 &(u_y,\mathbf{w}_y,\mathbf{b}_y,\theta_y)\in
C^{\frac13,\frac16}(Q_T)^6.
\end{split}
\end{equation*}
Finally, by the classical schauder estimates and the methods in
\cite{Lady}, the H\"{o}lder estimates of solutions are derived. This
completes the proof of Theorem \ref{thm}.
\section*{Acknowledgements} This work is partially supported by
National Nature Science Foundation of China (NNSFC-10971234,
NNSFC-10962137,NNSFC-11001278), China Postdoctoral Science
Foundation funded project (NO. 20090450191), Natural Science
Foundation of GuangDong (NO. 9451027501002564) and the Fundamental
Research Funds for the Central Universities.


\end{document}